\numberwithin{equation}{section}
\newtheorem{Theorem}{Theorem}[section]
\newtheorem*{Theorem*}{Theorem}
\newtheorem{Lemma}[Theorem]{Lemma}
\newtheorem{Proposition}[Theorem]{Proposition}
 { \theoremstyle{definition}

\newtheorem{Remark}[Theorem]{Remark} }
\def\R{\mathbb{R}}
\def\SL{\mathrm{SL}}
\def\sl{\mathfrak{sl}}
\def\d{\,\mathrm{d}}
\DeclareMathOperator{\supp}{supp}
\DeclareMathOperator{\sgn}{sgn}
\DeclareMathOperator{\re}{Re}
\DeclareMathOperator{\im}{Im}
\def\a{\alpha}
\def\b{\beta}
\def\c{\gamma}
\def\d{\delta}
\def\e{\varepsilon}
\def\f{\varphi}
\def\g{\psi}
\def\l{\lambda}
\def\m{\mu}
\def\n{\nu}
\def\s{\sigma}
\def\x{\xi}
\def\y{\eta}
\def\z{\zeta}
\def\pa{\partial}
\newcommand{\cR}{\mathcal{R}}
\newcommand{\sI}{\mathscr{I}}
\newcommand{\bfe}{\mathbf{e}}
\newcommand{\bfh}{\mathbf{h}}
\begin{document}

\newcommand{\arXivNumber}{2308.16815}

\renewcommand{\PaperNumber}{014}

\FirstPageHeading

\ShortArticleName{Strichartz Estimates for the $(k,a)$-Generalized Laguerre Operators}

\ArticleName{Strichartz Estimates for the $\boldsymbol{(k,a)}$-Generalized\\ Laguerre Operators}

\Author{Kouichi TAIRA~$^{\rm a}$ and Hiroyoshi TAMORI~$^{\rm b}$}

\AuthorNameForHeading{K.~Taira and H.~Tamori}

\Address{$^{\rm a)}$~Faculty of Mathematics, Kyushu University, 744, Motooka, Nishi-ku, Fukuoka, Japan}
\EmailD{\href{mailto:taira.kouichi.800@m.kyushu-u.ac.jp}{taira.kouichi.800@m.kyushu-u.ac.jp}}
\URLaddressD{\url{https://sites.google.com/view/the-home-page-of-kouichi-taira/home}}

\Address{$^{\rm b)}$~Department of Mathematical Sciences, Shibaura Institute of Technology,\\
\hphantom{$^{\rm b)}$}~307 Fukasaku, Minuma-ku, Saitama, 337-8570, Japan}
\EmailD{\href{mailto:tamori@shibaura-it.ac.jp}{tamori@shibaura-it.ac.jp}}

\ArticleDates{Received June 24, 2024, in final form February 12, 2025; Published online March 02, 2025}

\Abstract{In this paper, we prove Strichartz estimates for the $(k,a)$-generalized Laguerre operators $a^{-1}\bigl(-|x|^{2-a}\Delta_k+|x|^a\bigr)$ which were introduced by Ben Sa\"{\i}d--Kobayashi--{\O}rsted, and for the operators $|x|^{2-a}\Delta_k$. Here $k$ denotes a non-negative multiplicity function for the Dunkl Laplacian $\Delta_k$ and $a$ denotes a positive real number satisfying certain conditions. The cases $a=1,2$ were studied previously. We consider more general cases here. The proof depends on symbol-type estimates of special functions and a discrete analog of the stationary phase theorem inspired by the work of Ionescu--Jerison.}

\Keywords{Strichartz estimates; oscillatory integrals; representation theory; Schr\"odinger equations}

\Classification{35Q41; 22E45}

\section{Introduction}

For the usual Laplacian $\Delta$ on $\R^n$ ($n\ge 1$), the following inequalities hold:
\begin{align*}
\big\|{\rm e}^{{\rm i}t\Delta}u_0\big\|_{L^p(\R;L^q(\R^n))}\leq C\|u_0\|_{L^2(\R^n)},
\end{align*}
where $(p,q)\in [2,\infty]^2$ satisfies $2/p+n/q=n/2$ with $(p,q,n)\neq (2,\infty,2)$. These estimates are called Strichartz estimates and have been widely studied in the past thirty years. Strichartz~\cite{St} proved them for $p=q$ by using the Fourier restriction estimates and a duality argument. The most difficult part, that is the end-point case $(p,q)=(2,2n/(n-2))$ with $n\geq 3$, was proved by Keel and Tao~\cite{KT}.
These are used for well-posedness of linear and non-linear time-dependent Schr\"odinger equations~\cite{GV,Y}. See also the book~\cite{T}. For the Harmonic oscillator~\smash{$H_{\rm os}=\frac{-\Delta+|x|^2}{2}$}, a similar estimates hold:
\[
\big\|{\rm e}^{-{\rm i}tH_{\rm os}}u_0\big\|_{L^p([-T,T];L^q(\R^n))}\leq C_T\|u_0\|_{L^2(\R^n)},
\]
 where the region $[-T,T]$ cannot be replaced by $\R$ essentially due to the existence of $L^2$-eigenfunctions.

Given a root system $\cR$ in $\R^n$ (we assume reducedness and do not assume crystallographic condition for the definition of root system, see \cite[Definition~2.1]{BKO}), a $[0,\infty)$-valued function on $\cR$ which is invariant under the finite reflection group $\mathfrak{C}$ associated with $\cR$ is called a non-negative multiplicity function.
For a non-negative multiplicity function $k$ and $a>0$, we define the~$(k,a)$-generalized Laguerre operator by
\begin{align*}
H_{k,a}:=\frac{-|x|^{2-a}\Delta_k+|x|^a}{a}\qquad \text{on} \ \R^n.
\end{align*}
Here $\Delta_k$ denotes the Dunkl Laplacian (see \cite[formula~(2.9)]{BKO}).
If $k\equiv 0$, the Dunkl Laplacian coincides with the usual Laplacian: $\Delta_0=\Delta=\sum_{j=1}^n\pa_{x_j}^2$.

The $(k,a)$-generalized Laguerre operator $H_{k,a}$ is the generator of the $(k,a)$-generalized Laguerre semigroup which is a holomorphic semigroup introduced by Ben Sa\"{\i}d, Kobayashi and {\O}rsted~\cite{BKO}.
For the case $k\equiv 0$ and $a=2$ (resp.\ $a=1$), the semigroup is the Hermite semigroup~\cite{Fol, H} (resp.\ the Laguerre semigroup \cite{KM1, KM, KM2}).
These two semigroups are associated with some realization (called the Schr\"odinger model) of minimal representations of the metaplectic group $\mathrm{Mp}(n,\R)$ and a double cover of the indefinite orthogonal group $\mathrm{O}(n+1,2)$.
As unitary representations of $\widetilde{\SL}(2,\R)\times\mathfrak{C}$, they deformed these two representations with parameters $k$ and~$a$, and obtained a family of unitary representations.
The $(k,a)$-generalized Laguerre semigroups are associated with them.

In \cite{BKO}, the Fourier transforms associated with the $(k,a)$-generalized Laguerre semigroups are introduced and these various properties are studied. Recently, there have been several studies related to these operators such as real Paley--Wiener theorem \cite{LLF}, $L^p$-$L^q$-boundedness of Fourier multipliers \cite{KR}, Hardy inequality \cite{Te} and wavelet transform~\cite{BMMS}.

The aim of this paper is to prove Strichartz estimates of Schr\"odinger equations associated with the $(k,a)$-generalized Laguerre operators. This problem is proposed in \cite{BKO} and solved in~\cite{B} and~\cite{M} for $a=1$ and $a=2$ (see also \cite{MS}, where they deal with orthonormal Strichartz estimates). The $(0,2)$-generalized Laguerre operator $H_{0,2}$ is just the Harmonic oscillator $H_{\rm os}$ and so their results are a generalization of the classical result for $H_{\rm os}$. Here, we deal with more general cases.

For $a=1,2$, the integral kernel of the Schr\"odinger propagator has a nice expression (due to \cite[formula~(4.58)]{BKO}), which immediately implies the dispersive estimate \cite[Proposition 4.26]{BKO}. Hence the Strichartz estimates for $a=1,2$ are a direct consequence of this estimate and the result in~\cite{KT} by Keel and Tao, see also \cite{B,MS}. One of the difficulties to extend it to general $a$ is a lack of such a nice expression of the Schr\"odinger propagator. Actually, this is just expressed in terms of an infinite sum of a product of special functions (see \eqref{sIdefn} and \eqref{intkergen}). Therefore, we need to control this sum uniformly with respect to some parameters.
To overcome this difficulty, we use a strategy inspired by the proof of Carleman estimates due to Ionescu and Jerison \cite{IJ}. To be precise, we reduce estimates of the sum to those of integrals and use the theory of oscillatory integrals such as the stationary phase theorem with several parameters. One difference from \cite{IJ} is that we avoid using the dyadic decomposition which is used there many times. Instead, we employ an appropriate scaling and simplify some arguments.
In a sequel work \cite{TT}, we will give another approach based on a deformation of integrals developed in the proof for the Strichartz estimates on flat cones \cite{F}.

In the last few decades, numerous works have focused on the dispersive estimates or the Strichartz estimates for Schr\"odinger operators with critical electromagnetic potentials such as Aharonov--Bohm magnetic fields (see \cite{FFFP1,FFFP2,FZZ,GYZZ}) and Laplace--Beltrami operators on conic manifolds (\cite{F,ZZ}). Due to the spherical symmetry of their operator, the integral kernel of their propagator has the form
\begin{align*}
\sum_{\n\colon \text{eigenvalues on the sphere}}K_{\n}(t,r_1,r_2)H_{\n}(\theta_1,\theta_2),
\end{align*}
where $K_{\n}$ is the propagator in the radial direction and $H_{\n}$ is the projection in the spherical direction. To achieve optimal estimates for this integral kernel, one has to use the oscillatory behavior of $K_{\n}$ and $H_{\n}$, in other words, some cancelation of the sum much like our case. In their paper, it is accomplished through the use of the complex contour deformation and certain functional equations of the special functions.
Therefore, their method appears to be inapplicable when the radial propagator or the spherical projection is not expressed by special functions. Typical scenarios involving these situations arise in the studies of the Schr\"odinger equation with the degenerate trapping \cite{C} or the wave equation on the Schwartzshild spacetime \cite{DSS}. In their works, the dispersive or Strichartz estimates for initial values with a fixed angular momentum are considered, that is, they studied single modes only and did not consider the sum possibly because of the difficulty to treat the oscillation of the sum.

In this paper, we deduce the asymptotic expansions of the radial direction and the spherical projection first and then sum up them by exploiting their oscillatory behavior. Specifically, we use the properties of the special functions in the first step only. Hence the authors believe that the method employed here specifically in the second step remains applicable even when $K_{\n}$ and~$H_{\n}$ are not expressed in terms of special functions as is the case in \cite{C,DSS} (although we might need more precise analysis of the radial propagators).

The second contribution of this paper is to give a naive application of the stationary phase theorem (see Proposition \ref{Stphasemovecrit}). This is used to prove improved dispersive estimates for $H_{k,a}$ (near the diagonal) under the restriction $0<a<2$. Our integral kernel has multiple parameters and it seems important to consider when a similar statement the usual stationary phase theorem holds uniformly with respect to additional parameters and when it is improved. Our Proposition~\ref{Stphasemovecrit} addresses intermediate cases between two scenarios where the decay order of an oscillatory integral is improved, see Section \ref{subsecStphase}.

Finally, we also obtain symbol-type estimates of higher-order derivatives for $J$-Bessel functions, which was done in \cite{IJ} up to second derivatives and was anticipated to be true for higher-order derivatives there (see \cite[Remark after Theorem 9.1]{IJ}). It seems that the method used in \cite{IJ} via complex counter deformation cannot be applied to the estimates for higher-order derivatives. Here we use an alternative method based on the stationary phase type theorem and partially solve them at the cost of loss of estimates for some parameters.
See Proposition \ref{Besselasymp} for the precise statement and Appendix~\ref{AppBessel} for its proof. We also mention a recent work~\cite{Sh}, where the precise asymptotic behavior of the Bessel function is given although the authors do not know whether our symbol-type estimates follow from the results in~\cite{Sh}.

\subsection{Main theorem}
Let us state our main theorem.
For a nonnegative multiplicity function $k$ and $a>0$, we write
\begin{align}\label{vartheta}
\vartheta_{k,a}(x):=|x|^{a-2}\prod_{\alpha\in\cR}|\langle\alpha,x\rangle|^{k(\alpha)}
\end{align}
and assume that the homogeneous degree of the measure $\vartheta_{k,a}(x){\rm d}x$ on $\R^n$ is positive
\begin{align}\label{eq:positive}
\sigma_{k,a}:=\frac{n+\sum_{\alpha\in\cR}k(\alpha)+a-2}{a}>0.
\end{align}
Then it is shown in \cite[Corollary 3.22]{BKO} that the operator $H_{k,a}$ with domain $W_{k,a}(\R^n)$ defined in~\cite[equation~(3.29)]{BKO} is essentially self-adjoint on the Hilbert space $L^2(\R^n; \vartheta_{k,a}(x) {\rm d}x)$. We denote the unique self-adjoint extension of $H_{k,a}$ by the same symbol $H_{k,a}$.

We write $L^q=L^q(\R^n;\vartheta_{k,a}(x){\rm d}x )$ and $L^p(I, L^q)=L^p(I; L^q(\R^n;\vartheta_{k,a}(x){\rm d}x ))$ for $I\subset \R$.
Recall that an exponent pair $(p,q)\in[2,\infty]^2$ is called \emph{$\sigma_{k,a}$-admissible} if
\begin{align*}
\frac{1}{p}+\frac{\s_{k,a}}{q}=\frac{\s_{k,a}}{2} \qquad \text{and}\qquad (p,q,\s_{k,a})\neq (2,\infty,1).
\end{align*}

When $n=1$, a nonnegative multiplicity function $k$ is a constant function.
In this case, we regard $k$ as a nonnegative real number $k(\alpha)$ $(\alpha\in\cR)$, and we see $\sum_{\alpha\in\cR}k(\alpha)=2k$.

\begin{Theorem}\label{mainthmSt}
We assume one of the following, which implies \eqref{eq:positive}:
\begin{itemize}\itemsep=0pt
\item $n=1$ and $a\ge 2-4k$,
\item $n\ge 2$ and $(0<a\le 1$ or $a=2)$,
\item $n\ge 2$, $1<a<2$ and $k\equiv 0$.
\end{itemize}
Let $(p,q), (p_1,q_1), (p_2,q_2)\in [2,\infty]^2$ be $\sigma_{k,a}$-admissible exponents.
Then, for $T>0$, there exists~${C>0}$ such that
\begin{align}
&\big\|{\rm e}^{-{\rm i}tH_{k,a}}u\big\|_{L^p([-T,T]; L^q)}\leq C\|u\|_{L^2}\label{homStr},\\
&\left\|\int_{0}^t{\rm e}^{-{\rm i}(t-s)H_{k,a}}f(s)ds \right\|_{L^{p_1}([-T,T]; L^{q_1})} \leq C\|f\|_{L^{p_2^*}([-T,T]; L^{q_2^*})},\label{inhomStr}
\end{align}
where $r^*$ denotes the H\"older conjugate of $r$: $r^*=r/(r-1)$ and in addition $(p_j,q_j)\neq \bigl(2,\frac{2\s_{k,a}}{\s_{k,a}-1}\bigr)$ when all of the conditions $n\geq 2$, $1<a<2$ and $k\equiv 0$ hold.
\end{Theorem}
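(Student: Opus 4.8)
The plan is to derive both \eqref{homStr} and \eqref{inhomStr} from the abstract Strichartz machinery of Keel--Tao~\cite{KT}. Since $H_{k,a}$ is self-adjoint, $U(t):=e^{-{\rm i}tH_{k,a}}$ is a strongly continuous unitary group, so the energy estimate $\|U(t)u\|_{L^2}=\|u\|_{L^2}$ is automatic, and the whole problem reduces to a \emph{local-in-time dispersive estimate}
\begin{align*}
\big\|e^{-{\rm i}tH_{k,a}}\big\|_{L^1\to L^\infty}\leq C|t|^{-\sigma_{k,a}},\qquad 0<|t|\leq\delta,
\end{align*}
for some $\delta>0$; one cannot expect such a bound on all of $\R$ since $H_{k,a}$ has $L^2$-eigenfunctions, exactly as for $H_{\rm os}$. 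Granting it, I would cover $[-T,T]$ by $O(T/\delta)$ subintervals of length $\le\delta$, apply the Keel--Tao theorem on each after re-centering via the group law $U(t)=U(t-t_0)U(t_0)$ and unitarity, and sum the finitely many contributions; the homogeneous estimate \eqref{homStr} and, via the inhomogeneous part of~\cite{KT}, the estimate \eqref{inhomStr} then follow, with the single endpoint $(p,q,\sigma_{k,a})=(2,\infty,1)$ excluded as in~\cite{KT}.

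Thus the heart of the matter is the dispersive estimate. By \eqref{sIdefn} and \eqref{intkergen}, after passing to polar coordinates and using the $\Delta_k$-harmonic decomposition on the sphere, the kernel of $e^{-{\rm i}tH_{k,a}}$ is an infinite sum
\begin{align*}
\sum_{\nu}K_\nu(t,r_1,r_2)\,\Pi_\nu(\omega_1,\omega_2),
\end{align*}
where $\nu$ ranges over the spherical eigenvalues, $K_\nu$ is the radial propagator (expressed through Laguerre functions, equivalently through the kernel $\sI$ of \eqref{sIdefn} after an appropriate change of variables), and $\Pi_\nu$ is the reproducing kernel of the corresponding spherical eigenspace. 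The first step is to extract, uniformly in $\nu$ and in $(t,r_1,r_2,\omega_1,\omega_2)$, the asymptotics of both factors, writing each as an amplitude times $e^{{\rm i}(\text{phase})}$ with symbol-type control of derivatives in all parameters; for $K_\nu$ this is where the $J$-Bessel estimates of Proposition~\ref{Besselasymp} enter, and for $\Pi_\nu$ one uses standard estimates for the spherical reproducing kernel.

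The second, decisive step is to sum this series by exploiting its oscillation rather than by taking absolute values. Following the discrete stationary phase strategy of Ionescu--Jerison~\cite{IJ}, I would compare the sum over $\nu$ with a model integral, locate the (possibly moving) critical point of the total phase, and apply the stationary phase theorem with parameters --- including the refined version Proposition~\ref{Stphasemovecrit} --- to obtain the gain $|t|^{-\sigma_{k,a}}$. Unlike~\cite{IJ}, I would avoid the repeated dyadic decomposition in $\nu$, instead rescaling so that the effective large parameter is normalized and then running a single summation-by-parts / non-stationary-phase argument. \textbf{The main obstacle} is exactly this uniform control of the infinite sum: the critical point of the phase can migrate to the boundary of the summation range in some regimes, the amplitudes are only symbols with finitely many good derivatives, and every error term has to be tracked explicitly; this is also why the crude stationary phase bound must be sharpened near the spatial diagonal.

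Finally, the case distinction in the hypotheses reflects the strength of the estimates available. For $n=1$ with $a\ge 2-4k$, for $n\ge2$ with $0<a\le1$, and for $a=2$, the full dispersive estimate and the full Keel--Tao conclusions hold with no further restriction. For $n\ge2$, $1<a<2$ and $k\equiv0$ the analysis near the spatial diagonal is more delicate --- this is where the refined Proposition~\ref{Stphasemovecrit} enters --- and a residual limitation there accounts for the extra exclusion $(p_j,q_j)\neq\bigl(2,\tfrac{2\sigma_{k,a}}{\sigma_{k,a}-1}\bigr)$ in \eqref{inhomStr}, while the homogeneous estimate \eqref{homStr} is unaffected. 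Assembling the pieces --- unitarity, the local dispersive estimate, Keel--Tao on each subinterval, and summation over the finitely many subintervals of $[-T,T]$ --- completes the argument.
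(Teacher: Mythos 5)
Your overall architecture --- unitarity plus a local-in-time $L^1\to L^\infty$ dispersive bound, fed into Keel--Tao on finitely many subintervals of $[-T,T]$ --- is indeed the paper's route for the first two cases ($n=1$ with $a\ge 2-4k$, and $n\ge2$ with $0<a\le1$ or $a=2$): there the uniform bound of Theorem~\ref{keythm}\,(ii) applied to the kernel \eqref{intkergen} (or, for $n=1$, the explicit one-dimensional kernel formula with $I$-Bessel asymptotics) gives $\big|{\rm e}^{-{\rm i}tH_{k,a}}(x,x')\big|\lesssim|t|^{-\sigma_{k,a}}$ for $0<|t|\le\pi/2$, and the rest is Keel--Tao plus the standard interval-gluing argument. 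Citing the kernel bounds (Theorem~\ref{keythm}) as a black box is legitimate here, so your sketch of their proof, while vague, is not the issue.

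The genuine gap is in the third case $n\ge2$, $1<a<2$, $k\equiv0$. There the pointwise kernel bound is available only when $\hat{x}\cdot\hat{x'}$ stays away from $-1$ (Theorem~\ref{keythm}\,(i) is uniform only for $\varphi\in[0,\pi-\varepsilon]$, and the global dispersive estimate is expected to fail, cf.\ Remark~\ref{dispremark}), so your reduction of the whole theorem to a local-in-time $L^1\to L^\infty$ bound breaks down; saying the analysis near the diagonal is ``more delicate'' and that a ``residual limitation'' forces the endpoint exclusion names no mechanism. The paper's actual device is a finite partition of unity $\{\chi_j\}$ on the sphere with $\omega,\eta\in\supp\chi_j\Rightarrow\omega\cdot\eta\ge\cos\varphi_0$, and an application of Keel--Tao to $U(t)=1_{[-T,T]}(t)\chi_j(\hat{x}){\rm e}^{-{\rm i}tH_{k,a}}$: the $TT^*$ kernel $\chi_j(\hat{x}){\rm e}^{-{\rm i}(t-s)H_{k,a}}(x,x')\chi_j\bigl(\hat{x'}\bigr)$ only involves angularly close pairs, where Theorem~\ref{keythm}\,(i) yields the $|t-s|^{-\sigma_{0,a}}$ bound, and summing over the finitely many $j$ gives \eqref{homStr}. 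The inhomogeneous estimate \eqref{inhomStr} in this case does \emph{not} come from the inhomogeneous part of Keel--Tao (no global dispersive estimate is available); it is obtained from the homogeneous estimates via the Christ--Kiselev lemma \cite{CK} and complex interpolation as in \cite{BT}, and it is exactly the failure of Christ--Kiselev at $p_1=p_2^*=2$ that produces the exclusion $(p_j,q_j)\neq\bigl(2,\frac{2\sigma_{k,a}}{\sigma_{k,a}-1}\bigr)$. Without the angular localization and the Christ--Kiselev step, your argument does not establish the theorem in this case.
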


\begin{Remark}\label{rem:mainthmrem}\quad
\begin{itemize}\itemsep=0pt
\item[(1)]
The $a=1$ case was treated in \cite{B}, where an additional assumption $\s_{k,a}\ge 1$ is necessary in order to use an upper estimate of \smash{$\sI\bigl(2,\frac{\s_{k,a}-1}{2};w;t\bigr)$} \cite[Proposition 4.26]{BKO} (see \eqref{sIdefn} for the definition of $\sI$) although it is not explicitly written there. When $n=1$, the assumption~${\s_{k,a}\ge 1}$ implies our assumption $a\ge 2-4k$. Moreover, the end-point case \smash{$\bigl(2,\frac{2\s_{k,a}}{\s_{k,a}-1}\bigr)$} is excluded in \cite{B}.
The end-point case follows from the result in \cite{KT}.

\item[(2)] For $n\geq 2$ with $a\leq 1$, we also obtain a dispersive estimate, see the proof of Theorem~\ref{mainthmSt} in Section \ref{subsec:Stpf}. On the other hand, the dispersive estimate might break for $n\geq 2$ with~${1<a<2}$ (see Theorem \ref{keythm}\,(i) and Remark \ref{dispremark}). Nevertheless, the Strichartz estimates still hold if $k\equiv 0$ since its proof just relies on the dispersive estimate around the diagonal of the integral kernel due to the nature of the $TT^*$ argument. The authors believe that the Strichartz estimates do not hold for $a>2$ although they do not know its proof.

\item[(3)] We exclude an inhomogeneous end-point estimate for $1<a<2$ with $n\geq 2$ and $k\equiv 0$ since a global dispersive estimate is absent. A technique used in \cite{HZ,ZZ} might be available, however, our estimates are not sufficient to apply their method.

\item[(4)] In the above estimates, we cannot replace the time interval $[-T,T]$ by $\R$. In fact, we take~${u\neq 0}$ be an $L^2$-eigenfunction of $H_{k,a}$ and we denote the corresponding eigenvalue by~${\l\in \R}$ (note that its spectrum is discrete, see \cite[Corollary 3.22]{BKO}). Then \eqref{homStr} implies~${u\in L^q}$ since $\big|{\rm e}^{-{\rm i}tH_{k,a}}u(x)\big|=\big|{\rm e}^{-{\rm i}t\l}u(x)\big|=|u(x)|$. On the other hand,
 \[
 \big\|{\rm e}^{-{\rm i}tH_{k,a}}u\big\|_{L^{p}(\R;L^q)}=\|u\|_{L^{p}(\R;L^q)}=\infty\] although $\|u\|_{L^2}<\infty$.
\end{itemize}
\end{Remark}

Moreover, we can deduce global in time Strichartz estimates for $-|x|^a\Delta_{k}$.

\begin{Theorem}\label{coroSt}
Under the same assumptions and notation as Theorem {\rm\ref{mainthmSt}}, there exists $C>0$ such that
\begin{align*}
&\big\|{\rm e}^{{\rm i}t|x|^a\Delta_k}u\big\|_{L^p(\R; L^q)}\leq C\|u\|_{L^2},\qquad\left\|\int_{0}^t{\rm e}^{{\rm i}(t-s)|x|^a\Delta_k}f(s){\rm d}s \right\|_{L^{p_1}(\R; L^{q_1})} \leq C\|f\|_{L^{p_2^*}(\R; L^{q_2^*})}.
\end{align*}
\end{Theorem}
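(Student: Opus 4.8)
The plan is to deduce Theorem~\ref{coroSt} from Theorem~\ref{mainthmSt} by a conjugation (Mehler-type) trick that relates the propagator $e^{it|x|^a\Delta_k}$ for the operator $|x|^a\Delta_k$ to the propagator $e^{-itH_{k,a}}$ for the confined operator $H_{k,a}$, together with a change of the time variable. The point is that $H_{k,a}=a^{-1}(-|x|^{2-a}\Delta_k+|x|^a)$ and $-|x|^a\Delta_k$ differ, after multiplying by $|x|^a$ (a pointwise-in-$x$ factor that commutes with $L^q$-norms over the sphere) by the harmonic term $|x|^a$, which plays the role of a quadratic confining potential; this is exactly the structure for which the classical lens transform (used for the Hermite flow in the $k\equiv 0$, $a=2$ case) applies. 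So I would first record the operator identity $-|x|^a\Delta_k = aH_{k,a} - |x|^a \cdot (\text{something of degree }0)$ more carefully in the Dunkl setting, and then look for a unitary $U(t)$ on $L^2(\R^n;\vartheta_{k,a}(x)\d x)$, built from the dilation action $u(x)\mapsto \lambda^{\sigma_{k,a}/2}u(\lambda x)$ and multiplication by $e^{i\phi(t)|x|^a}$ for a suitable real function $\phi$, such that
\begin{align*}
e^{it|x|^a\Delta_k} = U(t)\, e^{-i\tau(t)H_{k,a}}\, U(0)^{-1},
\end{align*}
where $\tau(t)$ is a reparametrization of time (typically something like $\tau(t)=\arctan$-type or $\log$-type depending on the homogeneity, mapping $\R$ onto a bounded interval $(-T_0,T_0)$). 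This is the $(k,a)$-analog of the fact that the free Schr\"odinger flow is intertwined with the harmonic-oscillator flow by the lens transform; in \cite{BKO} the relevant semigroup is built precisely so that such intertwining holds, so I would cite the appropriate formula there (e.g.\ the relation between $H_{k,a}$ and its ``Fourier'' conjugate) rather than rederive it.

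Granting such an intertwiner, the proof proceeds in three steps. \emph{Step 1:} verify that $U(t)$ is, for each fixed $t$, an isometry of $L^q(\R^n;\vartheta_{k,a}(x)\d x)$ up to an explicit scalar Jacobian factor — indeed the multiplication by $e^{i\phi(t)|x|^a}$ is an $L^q$-isometry for every $q$ because it is unimodular, and the dilation $u(x)\mapsto \lambda(t)^{\sigma_{k,a}/2}u(\lambda(t)x)$ scales the $L^q$-norm by a power $\lambda(t)^{\sigma_{k,a}/2-\sigma_{k,a}/q}$ of the dilation parameter (using that $\vartheta_{k,a}(x)\d x$ is homogeneous of degree $\sigma_{k,a}$, which is why the homogeneity hypothesis \eqref{eq:positive} enters). \emph{Step 2:} compute how the substitution $\tau=\tau(t)$ transforms the mixed norm $\|\cdot\|_{L^p([-T,T];L^q)}$ into a weighted $L^p$ norm in the new time variable; since $\tau'(t)>0$ is bounded and bounded below only locally, one has to check that the combined weight coming from $\d\tau = \tau'(t)\d t$ and from the $L^q$-Jacobian $\lambda(t)^{\sigma_{k,a}(1/2-1/q)}$ is a \emph{bounded} function of $t\in\R$. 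Here the admissibility relation $1/p+\sigma_{k,a}/q=\sigma_{k,a}/2$, i.e.\ $\sigma_{k,a}(1/2-1/q)=1/p$, is exactly the balance that makes the time-Jacobian $\tau'$ and the spatial-dilation Jacobian cancel, turning the bounded-interval estimate of Theorem~\ref{mainthmSt} into the \emph{global} (over all of $\R$) estimate claimed here — this is the same mechanism by which global Strichartz estimates for the free equation follow from local ones for the oscillator via the lens transform. \emph{Step 3:} handle the inhomogeneous (Duhamel) estimate the same way: conjugate the Duhamel operator $\int_0^t e^{i(t-s)|x|^a\Delta_k}f(s)\d s$ by $U$, use that $U(t)U(s)^{-1}$ intertwines the two Duhamel kernels under $s\mapsto\tau(s)$, apply \eqref{inhomStr} on $[-T_0,T_0]$, and undo the substitution using the dual admissibility bookkeeping for the $(p_2^*,q_2^*)$ side (the H\"older conjugate exponents, for which the analogous cancellation holds with the opposite sign, again by admissibility).

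The main obstacle will be Step 2 / the careful bookkeeping of the two Jacobians in the Dunkl-weighted, non-integer-homogeneity setting: one must be sure the intertwiner $U(t)$ is well-defined and unitary on $L^2(\R^n;\vartheta_{k,a}(x)\d x)$ (the Dunkl operators do not commute with multiplication by $|x|^a$ in the naive way, so the identity $e^{it|x|^a\Delta_k}=U(t)e^{-i\tau(t)H_{k,a}}U(0)^{-1}$ must be justified at the level of the $\widetilde{\SL}(2,\R)\times\mathfrak{C}$-representation of \cite{BKO}, e.g.\ by checking it on a dense set of Laguerre-type eigenfunctions where everything is explicit), and that the time reparametrization $\tau$ really is a diffeomorphism of $\R$ onto a bounded interval with $\tau'$ and $\lambda$ having the precise homogeneity powers needed for cancellation. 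A secondary, purely bookkeeping point is that the dispersive-type or $TT^*$ arguments behind Theorem~\ref{mainthmSt} only give estimates on $[-T,T]$, so one must take $T=T_0$ (the image interval of $\tau$) — which is automatic since $\tau(\R)$ is a fixed bounded interval — and not attempt to let $T\to\infty$ on the $H_{k,a}$ side. Once the intertwiner and the homogeneity of $\vartheta_{k,a}\d x$ are in hand, both inequalities of Theorem~\ref{coroSt} drop out by the substitutions described above, with no further analysis of oscillatory integrals required.
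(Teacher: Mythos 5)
Your proposal is correct in substance, and it rests on the same key identity as the paper: the conjugation formula obtained from the unitary $\widetilde{\SL}(2,\R)$-representation $\Omega_{k,a}$ of \cite{BKO} (the paper derives it, as its formula \eqref{SL2}, by applying $\Omega_{k,a}$ to the group identity $\exp(\theta(\mathbf{e}^--\mathbf{e}^+))=\exp(-t\mathbf{e}^+)\exp(\tfrac{\log(1+t^2)}{2}\mathbf{h})\exp(t\mathbf{e}^-)$ with $\theta=\arctan t$, which is exactly your modulation--dilation intertwiner with $\tau=\arctan$; note the flow it produces is ${\rm e}^{{\rm i}t|x|^{2-a}\Delta_k/a}$, the one-parameter group attached to $\mathbf{e}^-$). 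Where you diverge is the transfer step: the paper uses the identity at the level of integral kernels to deduce a \emph{global-in-time dispersive bound} $\big|{\rm e}^{{\rm i}t|x|^{2-a}\Delta_k/a}(x,y)\big|\lesssim|t|^{-\sigma_{k,a}}$ via $(1+t^2)^{1/2}\sin(\arctan t)=t$, and then reruns the Keel--Tao argument of Theorem \ref{mainthmSt} (including the angular partition of unity and Christ--Kiselev step when $1<a<2$, $k\equiv0$); you instead transfer the Strichartz \emph{norms} directly by the lens-transform change of variables, using the admissibility relation $\sigma_{k,a}(\tfrac12-\tfrac1q)=\tfrac1p$ (and its dual version on the $(p_2^*,q_2^*)$ side) to cancel the spatial-dilation Jacobian against ${\rm d}\theta=(1+t^2)^{-1}{\rm d}t$. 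Your route uses Theorem \ref{mainthmSt} purely as a black box, which is cleaner in the case $n\ge2$, $1<a<2$, $k\equiv0$ where only a localized dispersive estimate is available, while the paper's route yields the global dispersive estimate itself, which is of independent interest. Two bookkeeping corrections for your write-up: the measure $\vartheta_{k,a}(x)\,{\rm d}x$ is homogeneous of degree $a\sigma_{k,a}$ (not $\sigma_{k,a}$), and the dilation appearing in \eqref{SL2} is by $(1+t^2)^{1/a}$ with prefactor $(1+t^2)^{\sigma_{k,a}/2}$ (cf.\ $\Omega_{k,a}(\exp(t\mathbf{h}))u(x)={\rm e}^{t\sigma_{k,a}}u\bigl({\rm e}^{2t/a}x\bigr)$), so the correct $L^q$-factor is $(1+t^2)^{-\sigma_{k,a}(\frac12-\frac1q)}=(1+t^2)^{-1/p}$; with this normalization your cancellation goes through verbatim, including the Duhamel term after the substitution $\sigma=\arctan s$, ${\rm d}s=(1+\tan^2\sigma)\,{\rm d}\sigma$.
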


\subsection{Key theorem}
We write $J_{\lambda}(z)$ for the $J$-Bessel function, $\tilde{I}_{\lambda}(w)$ for the normalized $I$-Bessel function, and $C^{\nu}_m(t)$ for
the Gegenbauer polynomial of degree $m$.
These functions are defined by
\begin{align*}
\tilde{I}_{\lambda}(w):={}&{\rm e}^{-\frac{\pi}{2}{\rm i}\l}\left(\frac{w}{2}\right)^{-\l}J_{\l}({\rm i}w),\\
C^{\nu}_m(t):={}&\frac{(-2)^m}{m!}\frac{\Gamma(m+\nu)\Gamma(m+2\nu)}{\Gamma(\nu)\Gamma(2m+2\nu)}(1-t^2)^{-\nu+\frac{1}{2}}\frac{{\rm d}^m}{{\rm d}t^m}\bigl(1-t^2\bigr)^{m+\nu-\frac{1}{2}}.
\end{align*}
Now we define
\begin{align}\label{sIdefn}
\sI(b,\nu;w;t):={}&
\Gamma(b\nu+1)\sum_{m=0}^{\infty}\left(\frac{w}{2}\right)^{bm}\tilde{I}_{b(m+\nu)}(w)(m+\nu)\n^{-1}C^{\nu}_m(t)\\
={}&\Gamma(b\nu+1)\sum_{m=0}^{\infty}\left(\frac{{\rm i}w}{2}\right)^{-b\nu}{\rm e}^{-\frac{\pi}{2}bm {\rm i}}J_{b(m+\nu)}({\rm i}w)(m+\nu)\n^{-1}C^{\nu}_m(t)\notag
\end{align}
for $b>0,\n\geq -\frac{1}{2}$, $w\in\mathbb{C}\setminus (-\infty,0)$ and $t\in [-1,1]$, where we interpret $(m+\nu)\nu^{-1}C^{\nu}_m(t)$ as~${\lim_{\n\searrow 0}(m+\nu)\n^{-1}C^{\nu}_m(t)}$ when $\n=0$. Moreover, we take a branch of $w^{bm}$ such that $w^{bm}\in (0,\infty)$ for $w\in (0,\infty)$ and $w^{bm}|_{w=0}=0$.
Then the sum in \eqref{sIdefn} absolutely converges, and $\sI$ is a~continuous function (see \cite[Lemma 4.17\,(1)]{BKO} or Section \ref{subsec:proof}).

In \cite[Theorem C, equations (4.50) and (4.52)]{BKO}, it is shown that the integral kernel ${\rm e}^{-{\rm i}tH_{k,a}}(x,\allowbreak x')$ of ${\rm e}^{-{\rm i}tH_{k,a}}$ for $0<|t|<\pi$ is given by
\begin{align}\label{intkergen}
c_{k,a}\frac{{\rm e}^{{\rm i}\frac{|x|^a+|x'|^a}{a}\mathrm{cot}(t)} }{({\rm i}\sin (t))^{\sigma_{k,a}}} \int_{\R^n}\sI\Bigg(\frac{2}{a},\frac{a(\sigma_{k,a}-1)}{2}; -{\rm i}\frac{2|x|^{\frac{a}{2}}|x'|^{\frac{a}{2}} }{a\sin(t)}; \x\cdot \hat{x'}\Bigg){\rm d}\m_{\hat{x}}^k(\x),
\end{align}
where $\hat{x}=x/|x|$, $c_{k,a}$ is the constant defined in \cite[equation~(1.6)]{BKO}, $\cdot $ denotes the standard inner product on $\R^n$ and ${\rm d}\m_{\hat{x}}^{k}$ is the probability measure introduced in \cite[equation~(2.5)]{BKO}. Moreover, for $k\equiv 0$, we have a more explicit expression
\begin{align}\label{intker0}
{\rm e}^{-{\rm i}tH_{0,a}}(x,x')=c_{0,a}\frac{{\rm e}^{{\rm i}\frac{|x|^a+|x'|^a}{a}\mathrm{cot}(t)} }{({\rm i}\sin (t))^{\frac{n+a-2}{a}}} \sI\left(\frac{2}{a},\frac{n-2}{2}; -{\rm i}\frac{2|x|^{\frac{a}{2}}|x'|^{\frac{a}{2}} }{a\sin(t)}; \hat{x}\cdot \hat{x'}\right).
\end{align}

Theorem \ref{mainthmSt} for $n\geq 2$ is a consequence of uniform bounds for $\sI(b,\nu;w;t)$:

\begin{Theorem}\label{keythm}
Let $\n\geq 0$.
\begin{enumerate}\itemsep=0pt
\item[$(i)$]
Suppose $0<b<2$ and $\e>0$. Then there exists $C_{b,\n,\e}>0$ such that
\begin{align*}
|\sI(b,\nu;-{\rm i}y;\cos\f)|\leq C_{b,\n,\e}(1+|y|)^{(1-b)\n}\qquad \text{for}\quad y\in \R,\quad \f\in [0,\pi-\e].
\end{align*}
When $b=1$, we can take $\e=0$.

\item[$(ii)$]
Suppose $b>0$. Then there exists $C_{b,\n}>0$ such that
\begin{align*}
|\sI(b,\nu;-{\rm i}y;\cos\f)|\leq C_{b,\n}(1+|y|)^{(2-b)\n}\qquad \text{for}\quad y\in \R,\quad \f\in [0,\pi].
\end{align*}
\end{enumerate}
In particular, when $1< b< 2$, the sum $\sI(b,\nu;-{\rm i}y;\cos\f)$ is uniformly bounded with respect to~${y\in \R}$ and $\f\in [0,\pi-\e]$. Moreover, when $b=1$, $b\geq 2$ or $\nu=0$, the sum $\sI(b,\nu;-{\rm i}y;\cos\f)$ is uniformly bounded with respect to $y\in \R$ and $\f\in [0,\pi]$.
\end{Theorem}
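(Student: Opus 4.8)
The plan is to reduce the bound on $\sI(b,\nu;-{\rm i}y;\cos\f)$ to a statement about the asymptotic behaviour, as $m\to\infty$, of the two families of special functions appearing in the series \eqref{sIdefn}: the normalized $I$-Bessel factor $\bigl(\frac{w}{2}\bigr)^{bm}\tilde I_{b(m+\nu)}(w)$ with $w=-{\rm i}y$, and the (renormalized) Gegenbauer polynomial $(m+\nu)\nu^{-1}C^\nu_m(\cos\f)$. For the first factor, one rewrites $\bigl(\frac{w}{2}\bigr)^{bm}\tilde I_{b(m+\nu)}(w)$ in terms of $J_{b(m+\nu)}(y)$ via the definition of $\tilde I$, and then invokes the symbol-type asymptotics for $J$-Bessel functions of large order collected in Proposition~\ref{Besselasymp}: in the oscillatory regime $y> b(m+\nu)$ one gets a phase of the form $\sqrt{y^2-b^2(m+\nu)^2}-b(m+\nu)\arccos\!\bigl(b(m+\nu)/y\bigr)$ with a slowly varying amplitude $\sim \bigl(y^2-b^2(m+\nu)^2\bigr)^{-1/4}$, while in the decaying regime $y< b(m+\nu)$ the function is exponentially small in $m$ and contributes a convergent, harmless tail. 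For the Gegenbauer factor, the classical Darboux/Hilb-type asymptotics give, for $\f\in(0,\pi)$, an expansion of $(m+\nu)\nu^{-1}C^\nu_m(\cos\f)$ as a cosine $\cos\!\bigl((m+\nu)\f - \text{(phase shift)}\bigr)$ times an amplitude of size $m^{\nu-1}(\sin\f)^{-\nu}$ (uniformly for $\f\in[\e,\pi-\e]$, with the usual care needed near the endpoints $\f=0,\pi$; the endpoint $\f=0$, i.e.\ $t=1$, is where $C^\nu_m(1)\sim m^{2\nu-1}$ is largest and must be handled separately, which is exactly why the restriction $\f\le\pi-\e$ enters in part (i)).

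Next I would substitute these two asymptotic expansions into the series, so that $\sI$ becomes, up to a convergent remainder and constants, a sum of the shape
\begin{align*}
\sum_{m\colon b(m+\nu)<|y|} A_m(y,\f)\,{\rm e}^{{\rm i}\Phi_\pm(m;y,\f)},
\end{align*}
where $A_m$ has size comparable to $\bigl(y^2-b^2(m+\nu)^2\bigr)^{-1/4} m^{\nu-1}(\sin\f)^{-\nu}$ and $\Phi_\pm$ combines the Bessel phase with $\pm(m+\nu)\f$. The crucial point is that this is a discrete oscillatory sum, and the worst-case bound obtained by placing absolute values inside — which would give roughly $\int_0^{|y|/b}\!\bigl(y^2-b^2 t^2\bigr)^{-1/4} t^{\nu-1}\,{\rm d}t \sim |y|^{\nu - 1/2}$ after the substitution near the turning point, but in fact $|y|^{\nu}$ or worse once one is careful — is too lossy; one must exploit cancellation. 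This is where the discrete stationary phase machinery inspired by Ionescu--Jerison comes in: I would apply a summation-by-parts / discrete van der Corput argument (the discrete analogue of the stationary phase theorem referred to in the introduction) to the variable $m$, using that the discrete second difference $\partial_m^2\Phi_\pm$ is bounded below, away from the turning point $b(m+\nu)\approx|y|$, by a quantity comparable to $1/\sqrt{y^2-b^2(m+\nu)^2}$ in the relevant region. A rescaling $m = |y|\, s/b$ turns the sum into a Riemann-sum approximation of an honest oscillatory integral in $s\in[0,1]$, to which the stationary-phase/van der Corput estimates with parameters apply; summing the contribution of the stationary point (if any) and of the turning point $s=1$ produces the gain of $|y|^{1/2}$ (or $|y|^{(2-b)/2}$ worth of saving in part (ii), $|y|$ worth in part (i)) over the trivial bound, yielding the claimed exponents $(1-b)\nu$ and $(2-b)\nu$ respectively.

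Finally, to assemble the theorem: part (ii) holds for all $b>0$ and all $\f\in[0,\pi]$ because the factor $(2-b)\nu$ is exactly the loss one incurs if one is forced to use the crude endpoint bound $|C^\nu_m(\cos\f)|\le C^\nu_m(1)\sim c_\nu m^{2\nu-1}$ together with the summed Bessel contribution $\sim |y|^{\nu+1/2}$ and then dividing out the normalization $\Gamma(b\nu+1)$ and the $\bigl(\frac{{\rm i}y}{2}\bigr)^{-b\nu}$ prefactor in the second line of \eqref{sIdefn}; part (i) holds for $0<b<2$ and $\f\le\pi-\e$ because away from $\f=\pi$ we gain an extra full power of $|y|$ over part (ii) from the genuine oscillation $\cos((m+\nu)\f-\cdots)$ of the Gegenbauer factor — the case $b=1$ is special and the restriction can be dropped there because then the Bessel order $b(m+\nu)=m+\nu$ matches the Gegenbauer oscillation frequency exactly, the two phases combine into something linear in $m$, and the sum telescopes/resums with no stationary point, giving boundedness uniformly up to $\f=\pi$. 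I expect the main obstacle to be making the discrete stationary/van der Corput estimate uniform in \emph{both} parameters $y$ and $\f$ simultaneously, in particular controlling the transitional zone where the Bessel turning point $b(m+\nu)\approx|y|$ and/or $\f$ is close to $0$ or $\pi$, since there the naive asymptotic expansions of $J$ and $C^\nu_m$ degenerate and one needs either Airy-type uniform asymptotics or an ad hoc direct estimate of the few terms near the turning point; keeping the dependence of all error terms explicit enough to sum them is the delicate bookkeeping that the rest of the paper is devoted to.
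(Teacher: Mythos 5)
Your overall strategy does match the paper's (symbol-type asymptotics for $J_{b(m+\nu)}(y)$ and $\nu^{-1}C^{\nu}_m(\cos\varphi)$, conversion of the discrete sum into oscillatory integrals, stationary/non-stationary phase uniformly in the parameters, separate treatment of the turning-point region $b(m+\nu)\approx y$), but there is a genuine gap in your treatment of part (i), stemming from a misreading of the statement. The admissible range in (i) is $\varphi\in[0,\pi-\varepsilon]$: the endpoint $\varphi=0$ (i.e., $t=1$, where $C^{\nu}_m(1)\sim m^{2\nu-1}$ is largest) is \emph{included}, and it is a neighborhood of $\varphi=\pi$ that is excluded. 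Your proposed source of the improvement from $(2-b)\nu$ to $(1-b)\nu$ is the oscillation/decay of the Gegenbauer factor, i.e., the gain $(1+m\sin\varphi)^{-\nu}$, which is only available when $\sin\varphi$ is bounded below; as written your argument therefore proves (i) only for $\varphi\in[\varepsilon,\pi-\varepsilon]$, which is strictly weaker than the theorem. Symptomatically, your argument never uses the hypothesis $0<b<2$, whereas it must enter: the whole difficulty of part (i) is the regime $\varphi\to 0$, where the paper needs $0<b<2$ to guarantee that $\partial_m S_{1,\sigma}=-\sigma_1 b\cos^{-1}(\mu(m)/y)+\sigma_2\varphi-\tfrac{\pi}{2}b$ stays a fixed distance from $2\pi\mathbb{Z}\setminus\{0\}$ (Lemma \ref{S_1implem}\,(i), so the $q\neq0$ terms of the Poisson-type formula \eqref{discon} are negligible) and vanishes at $\varphi=0$ only where $\mu(m)/y$ is small — exactly where the amplitude vanishes to order $2\nu$. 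The paper then needs a refined stationary phase lemma with a moving critical point $\mu_0(\varphi)\sim\varphi$ and amplitude bounds $\mu^{2\nu-\alpha}(1+\lambda\varphi\mu)^{-\nu}$ (Proposition \ref{Stphasemovecrit}) to extract $\lambda^{-\nu-\frac12}$ and hence $y^{(1-b)\nu}$ uniformly down to $\varphi=0$ (Proposition \ref{I_1propimp}). Nothing in your proposal supplies this mechanism, and it cannot be recovered from the Gegenbauer oscillation alone, since that degenerates at $\varphi=0$ just as it does at $\varphi=\pi$.

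Two further points are inaccurate, though less central. First, the transitional region $|b(m+\nu)-y|\lesssim y^{1/3}$ and the decaying region are not "harmless tails": by size alone they already contribute $\sim y^{(2-b)\nu}$ (this saturates the part (ii) exponent; cf.\ Propositions \ref{I_2prop} and \ref{I_3prop}), and to beat them down to $y^{(1-b)\nu}$ for $\varphi$ near $0$ or $\pi$ one needs a discrete non-stationary phase argument exploiting $b\notin 2\mathbb{Z}$ (so that $-\tfrac{\pi}{2}b\notin 2\pi\mathbb{Z}$), not exponential smallness of the Bessel factor — indeed the paper only proves polynomial decay in $y^{-1}(\mu-y)^3$ there. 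Second, the $b=1$, $\varepsilon=0$ assertion in (i) is not obtained in the paper by your proposed "phases combine linearly and the sum telescopes" heuristic, but directly from the closed formula of \cite{BKO} (their equation (4.45)), which gives $|\sI(1,\nu;-{\rm i}y;\cos\varphi)|=1$; your heuristic is not a proof and, as stated, does not account for the nonlinear (in $m$) Bessel phase $y h_1(\mu(m)/y)$.
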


\begin{Remark}\label{dispremark}
These estimates are sharp for $b=1,2$ with respect to the growth in $y$. In fact, \cite[equations~(4.45), (4,46)]{BKO} show $|\sI(1,\nu;-{\rm i}y;\cos\f)|=1$ and
\[
\sI(2,\nu;-{\rm i}y;-1)=\Gamma\left(\n+\frac{1}{2}\right)\tilde{I}_{\n-\frac{1}{2}}(0)=1,
\]
 which do not decay in $y$. The authors believe that they are sharp also for general $b>0$. We will pursue it in our sequel work \cite{TT}.
\end{Remark}

\subsection{Idea of the proof}

Here we give an idea of the proof of Theorem \ref{keythm}. We remark that the difficulty lies in the uniformity with respect to the parameters $y$ and $\f$. For the case $|y|\lesssim 1$, the results are an immediate consequence of the estimates given in \cite{BKO} (for rigorous treatment, see Section \ref{subsec:proof}). Hence we consider the case $|y|\gtrsim 1$. Let us assume $y\gtrsim 1$ for simplicity.

First, we try to estimate the sum $\sI$ using optimal estimates for the Bessel functions and the Gegenbauer polynomials. We write
\begin{align*}
\sI(b,\nu;- {\rm i}y;\cos\f)=&L_{b,\n}y^{-b\n}\sum_{m=0}^{\infty}(m+\nu){\rm e}^{-\frac{\pi}{2}bm {\rm i}}J_{b(m+\n)}(y)\n^{-1}C^{\nu}_m(\cos\f).
\end{align*}
Since the estimates for finite $m$ are easy to prove, we only consider the sum over $m\gg 1$.
By the bounds for the Bessel functions $|J_{\m}(y)|\leq C\m^{-\frac{1}{3}}$ in \eqref{Besselunif} and the Gegenbauer polynomials~${\big|\n^{-1}C_m^{\n}(\cos \f)\big|\leq Cm^{2\n-1}}$ in \eqref{Gegenunif}, we have
\begin{align*}
\left|y^{-b\n}\sum_{m\gg 1}^{\infty}(m+\nu){\rm e}^{-\frac{\pi}{2}bm {\rm i}}J_{b(m+\n)}(y)\n^{-1}C^{\nu}_m(\cos\f)\right|\leq{}& Cy^{-b\n}\sum_{m\gg 1}m\cdot m^{-\frac{1}{3}}\cdot m^{2\n-1}\\
={}&Cy^{-b\n}\sum_{m\gg 1}m^{2\n-\frac{1}{3}}.
\end{align*}
Since the sum $\sum_{m\gg 1}m^{2\n-\frac{1}{3}}$ is not convergent for $\n\ge -1/3$ at all and since the upper bounds for special functions are optimal, such a direct method cannot be applied. Similarly, the estimate based on 
$|\tilde{I}_{\lambda}(-{\rm i}y)|\le \Gamma(\lambda+1)^{-1}$
is far from a uniform estimate.

The drawback of the above strategy is to use the triangle inequality $|\sum \cdot|\leq \sum|\cdot|$.
To obtain a better estimate of the sum $\sI(b,\nu;-{\rm i}y;\cos\f)$, we need to make use of some cancellation of the sum instead of using the triangle inequality.
To do this, we write it as a sum of a WKB form
\begin{align*}
(m+\nu){\rm e}^{-\frac{\pi}{2}bm{\rm i}}J_{b(m+\n)}(y)\n^{-1}C^{\nu}_m(\cos\f)=:\z(m,y,\f){\rm e}^{{\rm i}S(m,y,\f)},
\end{align*}
where a phase function $S$ is a real-valued function and an amplitude $\z$ does not oscillate as~${m\to \infty}$ (uniformly in other parameters $y$, $\f$). Roughly speaking, it follows from the classical formula \eqref{discon} that we can replace the discrete sum by a sum of integrals of the form
\begin{align}\label{Ideaint}
\int_{m\gg 1}\z(m,y,\f){\rm e}^{{\rm i}S(m,y,\f)+2\pi {\rm i} mq} {\rm d}m,\qquad q\in\mathbb{Z}.
\end{align}
The problem on convergence (or uniform boundedness in some parameters) of such an integral has a long history and is related to the stationary phase theorem \cite{S}. We can anticipate that a uniform estimate for the sum $\sI$ can be proved by this theorem. On the other hand, we have to be careful to justify it because of the following reasons:
\begin{itemize}\itemsep=0pt
\item To use cancelation of the oscillation via the stationary or non-stationary phase theorems, we need symbol-type (derivative) estimates of $\z$. To do this, we have to study those of the special functions $J_{\m}$ and $C_{m}^{\n}$.

\item The integral \eqref{Ideaint} has a lot of parameters and we have to estimate it in a uniform way.

\item The $J$-Bessel function $J_{b(m+\n)}(y)$ has different asymptotic behaviors as $m,y\to \infty$ on the regions $m\ll y$, $m\thickapprox y$, $m\gg y$ (see Proposition \ref{Besselasymp}) and therefore the WKB form~${\z(m,y,\f){\rm e}^{{\rm i}S(m,y,\f)}}$ has different properties on each region.

\item The decay rate of $C^{\nu}_m(\cos\f)$ as $m\to \infty$ becomes worse as $\f\to 0,\pi$ (see Proposition~\ref{Gegenasymp}) and hence estimates for $\f=0,\pi$ are also worse in general. Nevertheless, at $\f=0$ with~${0<b<2}$, we can improve the estimate since the first derivative of $S(m,y,0)$ vanishes only on regions where $m$ is small enough (see Theorem \ref{keythm}\,(i) and Section \ref{subsec0b2imp}).
\end{itemize}

\subsection{Related problems}

In this subsection,\footnote{The authors would like to thank the anonymous referees for suggesting several unsolved problems and inspiring the writing of this subsection.} we mention a few related problems, including those concerning Strichartz estimates and fractional operators.
\begin{itemize}\itemsep=0pt
\item Prove the optimality of our Strichartz estimates (see Theorem~\ref{mainthmSt}).
\item Prove the Strichartz estimates for fractional operators such as $H_{k,a}^{\a}$ or $(-\Delta_{k,a})^{\a}$ ($\a>0$). When $\a=1/2$, this corresponds to the Strichartz estimates for the wave equation.
\item Study properties of the integral kernel $E(x,y)$ of the spectral projection for $H_{k,a}$ and~$-\Delta_{k,a}$. In \cite{HZ}, the Strichartz estimates on asymptotically conic manifolds are proved by using the expression of $E(x,y)$ and the stationary phase theorem.
\end{itemize}

\subsection{Notation}
Let
\begin{align*}
\mathbb{N}=\{0,1,2,\hdots\},\qquad \mathbb{N}^*=\{1,2,3,\hdots\}, \qquad \R_{+}=(0,\infty),\qquad \R_-=(-\infty,0).
\end{align*}
For a parameter $i\in I$ and $A_i,B_i$, we write $A_i\lesssim B_i$ for $i\in I$ if there is a constant $C>0$ independent of $i\in I$ such that $A_i\leq CB_i$. We denote $A_i\sim B_i$ if both $A_i\lesssim B_i$ and $B_i\lesssim A_i$ hold for $i\in I$.

\section{Preliminary}

\subsection{A sum of monotonic functions}

The next lemma is elementary and follows from the piecewise quadrature.

\begin{Lemma}\label{monotonesum}
Let $f\colon\R\to [0,\infty)$ be a continuous monotonically decreasing function. Then
\begin{align*}
\sum_{m\in b\mathbb{N},\, m_1\leq m\leq m_2}f(m)\leq \int_{m_1-b}^{m_2}f(x){\rm d}x
\end{align*}
for $b>0$ and $m_1,m_2\in \mathbb{R}$ with $m_1<m_2$, where $b\mathbb{N}=\{b n\mid n\in \mathbb{N}\}$.
\end{Lemma}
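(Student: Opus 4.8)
The plan is to prove this by piecewise quadrature: for each grid point $m$ in the summation range I will dominate the term $f(m)$ by the integral of $f$ over the mesh cell $[m-b,\,m]$ lying immediately to the left of $m$, and then add these bounds together. The empty-sum case is trivial since $f\ge 0$, so I may assume the sum on the left is non-empty; write $bn_1<b(n_1+1)<\cdots<bn_2$ for the elements of $b\mathbb{N}$ contained in $[m_1,m_2]$, so that in particular $m_1\le bn_1$ and $bn_2\le m_2$.

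The one substantive step is the quadrature bound. Since $f$ is non-increasing, its value $f(m)$ at the right endpoint of the cell $[m-b,\,m]$ is a lower bound for $f$ on that whole cell, and hence $f(m)\le\int_{m-b}^{m}f(x)\,\mathrm{d}x$ for every $m\in\{bn_1,\dots,bn_2\}$. This is the only place the monotonicity of $f$ is used.

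It then remains to sum these inequalities. Consecutive points of $b\mathbb{N}$ are exactly $b$ apart, so the cells $[m-b,\,m]$ with $m\in\{bn_1,\dots,bn_2\}$ have pairwise disjoint interiors and their union is the interval $[\,bn_1-b,\ bn_2\,]$; since $bn_1\ge m_1$ and $bn_2\le m_2$, this interval is contained in $[\,m_1-b,\ m_2\,]$. Therefore
\[
\sum_{\substack{m\in b\mathbb{N}\\ m_1\le m\le m_2}}f(m)\ \le\ \sum_{n=n_1}^{n_2}\int_{bn-b}^{bn}f(x)\,\mathrm{d}x\ =\ \int_{bn_1-b}^{bn_2}f(x)\,\mathrm{d}x\ \le\ \int_{m_1-b}^{m_2}f(x)\,\mathrm{d}x,
\]
where the last inequality uses $f\ge 0$; this is the asserted estimate.

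I do not expect a genuine obstacle here: the argument is elementary and, once the quadrature bound is in hand, is purely a matter of bookkeeping. The only point that deserves a little care is to attach the comparison cell to the correct (left) side of each grid point — so that the non-increasing function $f$ really does dominate $f(m)$ on it — and to check that these cells are disjoint and remain inside $[m_1-b,\,m_2]$; this last containment is exactly the reason the lower limit of the integral on the right-hand side is $m_1-b$ rather than $m_1$.
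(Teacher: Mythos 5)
Your route is exactly the ``piecewise quadrature'' the paper alludes to (it gives no written proof beyond that phrase), but the one substantive step is stated incorrectly and this is a genuine gap. Monotonicity gives $f(x)\ge f(m)$ for $x\in[m-b,m]$, hence $\int_{m-b}^{m}f(x)\,\mathrm{d}x\ge b\,f(m)$: the integral dominates the value at the right endpoint \emph{times the cell length} $b$, not the value itself. Your inequality $f(m)\le\int_{m-b}^{m}f(x)\,\mathrm{d}x$ therefore holds only when $b\ge 1$ (or $f(m)=0$), and for $0<b<1$ it simply fails: with $b=\tfrac12$, $m_1=0$, $m_2=1$ and $f(x)=\mathrm{e}^{-\varepsilon x}$ with $\varepsilon$ small, the left-hand side of the lemma is $f(0)+f(\tfrac12)+f(1)\approx 3$ while $\int_{-1/2}^{1}f(x)\,\mathrm{d}x\approx \tfrac32$. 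So the step as written cannot be repaired for small $b$ --- indeed this example shows the inequality with constant $1$ is itself false for $b<1$, so no proof of the statement as displayed can succeed in that range.

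What the cell-by-cell comparison actually yields, after summing over the disjoint cells and using $f\ge 0$ exactly as in your bookkeeping, is $\sum_{m\in b\mathbb{N},\,m_1\le m\le m_2}f(m)\le b^{-1}\int_{m_1-b}^{m_2}f(x)\,\mathrm{d}x$, valid for every $b>0$; equivalently, your argument proves the lemma verbatim under the additional hypothesis $b\ge 1$. Either version is what the paper really needs (in Proposition~\ref{I_3prop} the parameter $b$ is fixed and all bounds are up to $b$-dependent constants), so the fix is simply to carry the factor $b$ through your quadrature bound, writing $b\,f(m)\le\int_{m-b}^{m}f(x)\,\mathrm{d}x$, and then either keep the resulting $b^{-1}$ on the right-hand side or note the restriction $b\ge 1$. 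The remaining parts of your write-up --- disjointness of the cells, their union being $[bn_1-b,\,bn_2]\subset[m_1-b,\,m_2]$, and positivity of $f$ to enlarge the domain of integration --- are correct and match the intended argument.
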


\subsection{Discrete oscillatory integrals}

In this paper, the following classical formula plays an important role:
\begin{align}\label{discon}
\sum_{m\in\mathbb{Z}}{\rm e}^{{\rm i}S(m)}\z(m)=\int_{\R}{\rm e}^{{\rm i}S(m)}\z(m){\rm d}m-\frac{1}{2\pi {\rm i}}\sum_{q\in\mathbb{Z}\setminus \{0\}}\frac{1}{q}\int_{\R}{\rm e}^{{\rm i}(S(m)+2\pi qm)}\tilde{\z}(m) {\rm d}m,
\end{align}
where $\tilde{\z}(m)=\z'(m)+{\rm i}S'(m)\z(m)$ (for example, see \cite[Proof of Lemma 5.3]{IJ}). In this paper, we use this formula for a compactly supported smooth function $\z$ and a smooth function $S$, so the convergence of the sum does not matter.
 From this formula, we can deduce a discrete analogue of the non-stationary phase theorem.

\begin{Proposition}\label{discsum1}
Let $C_{\a}>0$ for any nonnegative integer $\a$ and $M\geq 1$, $0<r\leq 1$, $0<\rho\leq 1$, $k\in \R$.
Suppose that
$
\supp \z\subset [0,M]$, $ |\pa_m^{\a}\z(m)|\leq C_{\a}M^{k-\rho\a}$.
Suppose that a smooth real-valued function $S\in C^{\infty}(\R;\R)$ satisfies
\begin{align*}
\mathrm{dist}(\pa_mS(m), 2\pi\mathbb{Z})\geq r,\qquad \big|\pa_m^{\a+1}S(m)\big|\leq C_{\a}M^{-\rho\a}|\pa_mS(m)|.
\end{align*}
Then, for each $N>0$,
\begin{align*}
\left|\sum_{m=1}^{\infty}{\rm e}^{{\rm i}S(m)}\z(m)\right|\leq C_NM^{k+1}(1+r M^{\rho})^{-N},
\end{align*}
where $C_N>0$ depends only on $0<\rho\leq 1$, $N>0$ and a finite number of $C_{\a}$.
\end{Proposition}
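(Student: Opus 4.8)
The plan is to apply the identity \eqref{discon} to the sum $\sum_{m\ge 1}{\rm e}^{{\rm i}S(m)}\z(m)$ and estimate each of the resulting integrals by repeated integration by parts. Since $\z$ is compactly supported in $[0,M]$ and smooth, the left side equals $\sum_{m\in\Z}{\rm e}^{{\rm i}S(m)}\z(m)$ up to the harmless $m=0$ term (which is controlled by $|\z(0)|\le C_0 M^k$, absorbed into the bound since $M\ge 1$ forces $M^k\le M^{k+1}(1+rM^\rho)^{-N}$ only if $rM^\rho\lesssim 1$ — so more carefully, the $m=0$ term is trivially $\lesssim M^k \le M^{k+1}$, and the decay factor will come from the integrals; one handles the boundary term by noting $\z$ and all derivatives vanish outside a set of measure $M$, so it is cleaner to assume $\supp\z\subset[0,M]$ does not contain $0$ in its interior, or simply to note the point contributes $O(M^k)$ which is dominated when $rM^\rho\le 1$, and when $rM^\rho\ge 1$ it is dominated by the full integral estimate). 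So the real content is the two families of integrals on the right of \eqref{discon}.

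First I would treat the main integral $\int_\R {\rm e}^{{\rm i}S(m)}\z(m)\,{\rm d}m$. Define the first-order differential operator $L f = \dfrac{1}{{\rm i}S'(m)}\dfrac{d}{dm}f$, so that $L\,{\rm e}^{{\rm i}S(m)} = {\rm e}^{{\rm i}S(m)}$, and integrate by parts $N$ times, moving $(L^t)^N$ onto $\z$. The transpose operator is $L^t g = -\dfrac{d}{dm}\!\left(\dfrac{g}{{\rm i}S'(m)}\right)$, and the hypotheses $|\pa_m^{\a+1}S|\le C_\a M^{-\rho\a}|\pa_m S|$ together with $\mathrm{dist}(\pa_m S,2\pi\Z)\ge r$ (hence $|\pa_m S|\ge r$) give, by Leibniz and induction, the symbol-type bound $|(L^t)^N\z(m)|\le C_N M^k (|S'(m)|^{-1}M^{-\rho})^N \le C_N M^k (rM^\rho)^{-N}$ on $\supp\z$. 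Here the key algebraic point is that each application of $L^t$ either differentiates $\z$ (gaining $M^{-\rho}$ by the amplitude hypothesis) or differentiates $1/S'$ (which by the phase hypothesis $|\pa_m^{\a+1}S|\le C_\a M^{-\rho\a}|\pa_m S|$ also costs only $M^{-\rho}$ in size while keeping one power of $|S'|^{-1}$), so the net effect of $N$ steps is a factor $(M^{-\rho})^N|S'|^{-N}$; this is exactly the mechanism that makes the "variable-coefficient" phase behave like a constant-slope one. Integrating over the support of length $\le M$ yields $|\int_\R {\rm e}^{{\rm i}S}\z| \le C_N M^{k+1}(rM^\rho)^{-N}$, and combining with the trivial bound $M^{k+1}$ (take $N=0$) gives $\lesssim M^{k+1}(1+rM^\rho)^{-N}$.

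For the tail sum $\sum_{q\ne 0}\frac1q\int_\R {\rm e}^{{\rm i}(S(m)+2\pi qm)}\tilde\z(m)\,{\rm d}m$ with $\tilde\z=\z'+{\rm i}S'\z$, I would run the same integration-by-parts argument with phase $S_q(m):=S(m)+2\pi q m$. Since $|\pa_m^{\a+1}S_q|=|\pa_m^{\a+1}S|$ for $\a\ge 1$ and $|\pa_m S_q(m)|\ge \mathrm{dist}(\pa_m S,2\pi\Z)+2\pi(|q|-1)\ge r + 2\pi(|q|-1) \gtrsim |q|$ for $|q|\ge 1$ (using $r\le 1$), one checks $S_q$ satisfies the hypotheses of the same type but now with lower bound $\sim |q| + rM^\rho\cdot(\text{stuff})$ — more precisely with $|S_q'|\gtrsim \max(|q|,r)$ — and the higher-derivative control of $S_q$ relative to $|S_q'|$ still holds since $|S'| \le$ (something) and $M^{-\rho\a}|S_q'|\ge M^{-\rho\a}|S'|$ when... this comparison is the one delicate point and I would verify $|\pa_m^{\a+1}S| \le C_\a M^{-\rho\a}|\pa_m S_q|$ directly from $|S'|\le|S_q'|+2\pi|q|$ is not quite immediate — instead use that on $\supp\z$ the bound $|\pa_m^{\a+1}S|\le C_\a M^{-\rho\a}|\pa_m S|\le C_\a M^{-\rho\a}(|\pa_m S_q|+2\pi|q|)$ and absorb the $|q|$ term using $|\pa_m S_q|\gtrsim|q|$. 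The amplitude obeys $|\pa_m^\a \tilde\z|\le C_\a M^{k+\text{(something)}}\cdots$ — note $\tilde\z$ has size $\sim M^k\cdot M^{?}$: since $|S'|$ may be as large as $O(M)$ a priori we do not have a good pointwise bound on $S'\z$ unless we also know $|S'|\lesssim M^{1-\rho}$ or similar; here I expect one uses that the relevant range is where $|S'|$ is comparable to $rM^\rho$ or to $|q|$, so $|\tilde\z|\lesssim M^k\max(|q|,rM^\rho)$. Integrating by parts $N+2$ times then produces $|\int_\R {\rm e}^{{\rm i}S_q}\tilde\z|\lesssim M^{k+1}\max(|q|,rM^\rho)\,(|q| M^\rho)^{-(N+2)}$, and summing $\frac1q$ over $q\ne 0$ converges and yields $\lesssim M^{k+1}(1+rM^\rho)^{-N}$ after distinguishing $rM^\rho\le 1$ from $rM^\rho\ge 1$.

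\textbf{Main obstacle.} The routine part is the integration-by-parts bookkeeping; the genuine difficulty is the bookkeeping for the $q$-sum: one must show the amplitude $\tilde\z$ (which contains the possibly large factor $S'$) together with the gain from the shifted phase $S_q$ produces enough decay in $|q|$ to sum $\sum 1/q$ \emph{and} enough decay in $M$ to match $(1+rM^\rho)^{-N}$, uniformly, with constants depending only on $\rho$, $N$, and finitely many $C_\a$. Getting the two regimes $rM^\rho\lesssim1$ and $rM^\rho\gtrsim1$ to combine cleanly, and correctly propagating the phase-derivative hypothesis from $S$ to $S_q$ on $\supp\z$, is where the care is needed.
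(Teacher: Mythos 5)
Your treatment of the main integral $\int_\R {\rm e}^{{\rm i}S(m)}\z(m)\,{\rm d}m$ is fine and matches the paper's (the paper first rescales $m\mapsto M^{\rho}m$, but that is cosmetic). The genuine gap is in the $q\neq 0$ terms, and it is exactly the point you flag as "delicate" but do not resolve. The missing idea is the paper's first step: since $\pa_m S$ is continuous and $\mathrm{dist}(\pa_mS,2\pi\Z)\geq r$ everywhere, $\pa_mS$ stays in a single interval $(2\pi k_0,2\pi(k_0+1))$ for one fixed integer $k_0$; replacing $S(m)$ by $S(m)-2\pi k_0 m$ changes no term of the sum (because ${\rm e}^{2\pi {\rm i}k_0 m}=1$ for $m\in\Z$) and normalizes $r\leq \pa_mS\leq 2\pi-r$. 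Without this normalization both of your key claims for the shifted phases fail: the bound $|\pa_m(S+2\pi qm)|\geq r+2\pi(|q|-1)\gtrsim |q|$ is simply false (take $\pa_mS\approx 2\pi K+\pi$ with $K$ huge, which is consistent with all hypotheses; then for $q=-K$ the shifted derivative is $\approx\pi$, not $\gtrsim K$), and the amplitude $\tilde\z=\z'+{\rm i}S'\z$ carries the unbounded factor $S'$, for which your proposed workaround ("$|S'|$ comparable to $rM^{\rho}$ or to $|q|$") has no justification — $|S'|$ need not be comparable to either.

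After the normalization, everything you want becomes true and your integration-by-parts scheme closes: $|S'|\leq 2\pi$ gives $|\pa_m^{\a}\tilde\z|\lesssim M^{k-\rho\a}$ (up to harmless constants), the main term and the $|q|=1$ terms are estimated exactly like your main integral with lower bound $r$ on the derivative, and for $|q|\geq 2$ one has $|\pa_m(S+2\pi qm)|\geq 2\pi(|q|-1)\gtrsim |q|$, so integrating by parts $N+2$ times yields enough decay in $|q|$ to sum $\sum_{q\neq 0}|q|^{-1}(\cdots)$ and still retain the factor $(1+rM^{\rho})^{-N}$. This is precisely how the paper (following Ionescu--Jerison, Lemma 5.3) proceeds; your proposal is the same strategy minus the one structural observation that makes the $q$-sum work.
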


\begin{Remark}
In \cite{IJ}, the fact that $C_N$ is independent of $r$ is important since they avoid to use the stationary phase theorem there. In this paper, we do not use this fact, that is, we use the case $r=1$ only in this paper.
\end{Remark}

\begin{proof}[Proof of Proposition~\ref{discsum1}]
The proof is a slight modification of \cite[Lemma 5.3]{IJ} and we discuss briefly here.

By the assumption and the intermediate value theorem, there exists a unique $k\in\mathbb{Z}$ such that $\pa_mS(m)\in (2\pi k,2\pi (k+1))$. This implies
$r\leq \pa_mS(m)-2\pi k\leq 2\pi -r$. By replacing $S(m)$ by $S(m)-2\pi k$, we may assume $k=0$ and $r\leq \pa_mS(m)\leq 2\pi -r$.

From the formula \eqref{discon}, the problem reduces to the estimates for each integrals appearing in~\eqref{discon}. By the change of variable from $m$ to $M^{\rho}m$, we have
\begin{align}\label{discsumpfsc}
\int_{\R}{\rm e}^{{\rm i}S(m)}\z(m){\rm d}m=M^{\rho}\int_{\R}{\rm e}^{{\rm i}S_M(m)}\z_M(m){\rm d}m,
\end{align}
where we set $S_M(m)\!=S(M^{\rho}m)$ and $\z_M(m)\!=\z(M^{\rho}m)$. Then the assumptions imply $|\pa_m^{\a}\z_M(m)|\allowbreak\leq C_{\a}M^k$, $\big|\pa_m^{\a+1}S_M(m)\big|\leq C_{\a}|\pa_mS_M(m)|$, $\pa_mS_M(m)\geq rM^{\rho}$ and $\supp \z_M\subset \bigl[0,4M^{1-\rho}\bigr]$. We integrate by parts $N$ times in the right-hand side of \eqref{discsumpfsc} (use the identity $({\rm i}S_M'(m))^{-1} \pa_m{\rm e}^{{\rm i}S_M(m)}={\rm e}^{{\rm i}S_M(m)} $ and Lemma \ref{Leibnizrule}) and obtain
\begin{align*}
\left|\int_{\R}{\rm e}^{{\rm i}S(m)}\z(m){\rm d}m\right|\leq M^{\rho}\cdot C_{N}M^k(1+rM^{\rho})^{-N}\cdot M^{1-\rho}=C_NM^{k+1}(1+rM^{\rho})^{-N},
\end{align*}
where the term $M^{1-\rho}$ comes from the volume of the support $\supp \z_M$. The second terms of the right-hand side of \eqref{discon} are similarly dealt with as in the proof of \cite[Lemma 5.3]{IJ}.
\end{proof}

\subsection{A variant of the stationary phase theorem}\label{subsecStphase}

Here we give a variant of the stationary phase theorem which is used for an improvement of estimates of $\sI$ described in Section \ref{subsec0b2imp}.
We consider the following integral with parameters~$\l$,~$\f$:
\begin{align*}
I(\l,\f):=\int_{\R}{\rm e}^{{\rm i}\l S(\m,\f)}\c(\m,\l,\f){\rm d}\m
\end{align*}
and its decay rate with respect to $\l\gg 1$

If we freeze the parameter $\f$, then the stationary phase theorem just implies that if $\m\mapsto S(\m,\f)$ is a Morse function (that is, all critical points are non-degenerate in the sense that $\pa_{\m}^2S\neq 0$ there), the optimal decay rate of $I$ is \smash{$\l^{-\frac{1}{2}}$}. To obtain an improved decay, here we consider the following two scenarios:
\begin{itemize}\itemsep=0pt
\item If we assume that $\c$ vanishes with order $2\n$ at all the critical points of $S$ in addition, then the optimal decay rate is improved to be \smash{$\l^{-\frac{1}{2}-\n}$}.
\item If the symbol $\c$ itself decays like $(1+\l)^{-\n}$, then the decay order becomes \smash{$\l^{-\frac{1}{2}-\n}$}.
\end{itemize}
The following proposition interpolates the above two situations in a uniform way with respect to the parameter $\f$.
A typical example of such a phase function is $S(\m,\f)=(\m-\f)^2$. This simple case could simplify the reading of the proof below.

\begin{Proposition}\label{Stphasemovecrit}
Let $\n\geq 0$ and $c, C>0$. Suppose that $S\in C^{\infty}\bigl([0,1]^2;\R\bigr)$ and $\c(\cdot,\l,\cdot)\in C^{\infty}(\R\times [0,1])$ satisfy
\begin{align*}
&\supp \c(\cdot,\l,\f)\subset [0,1],\qquad |\pa_{\m}^{\a}\c(\m,\l,\f)|\leq C_{\a}\m^{2\n-\a}(1+\l \f\m)^{-\n},\\
&
\big|\pa_{\m}^2S(\m,\f)\big|\geq c,\qquad |\pa_{\m}\pa_{\f}S(\m,\f)|\geq C
\end{align*}
uniformly in $\m\in [0,1]$, $\f\in [0,1]$ and $\l\gg 1$. Suppose that $S(\cdot,\f)$ has a unique critical point~${\m_0(\f)}$ which is smooth with respect to $\f$ and that $\m_0(0)=0$. Then
 \smash{$|I(\l,\f)|\lesssim \l^{-\n-\frac{1}{2}}$} uniformly in $\l\gg 1$ and $\f\in [0,1]$.
\end{Proposition}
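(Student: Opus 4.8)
The plan is to split the integral at a scale $\delta$ adapted to $\f$, separating the region where the amplitude bound is ``flat'' from the region where the extra factor $(1+\l\f\m)^{-\n}$ gives genuine decay, and to treat the two regimes $\f$ small and $\f$ bounded below separately. First, observe that the condition $|\pa_\m\pa_\f S|\geq C$ together with the normalization $\m_0(0)=0$ forces $\m_0(\f)\sim \f$ uniformly (integrate $\pa_\f\m_0(\f)=-\pa_\m\pa_\f S/\pa_\m^2 S$, which is bounded above and below in absolute value); in particular, on the critical point, $\l\f\m_0(\f)\sim \l\f^2$. This is the geometric heart of the statement: as $\f\to 0$, the critical point slides into the region $\m=0$ where $\c$ vanishes to order $2\n$, so one gains either from degeneracy of the amplitude at the critical point or from the largeness of $\l\f\m$ away from it.

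The first main step is the regime $\l\f^2\lesssim 1$ (i.e.\ $\f\lesssim \l^{-1/2}$). Here $\m_0(\f)\lesssim \l^{-1/2}$, and I would simply use the amplitude bound $|\pa_\m^\a\c|\lesssim C_\a\m^{2\n-\a}$ (discarding the $(1+\l\f\m)^{-\n}$ factor, which is $\gtrsim 1$ here anyway only up to constants — actually it is $\leq 1$, so this is a genuine bound) and invoke the \emph{first} classical scenario mentioned before the proposition: $\c$ vanishes to order $2\n$ at the unique critical point, so a standard stationary-phase estimate with a vanishing amplitude (e.g.\ van der Corput iterated, or a Taylor expansion of $\c$ around $\m_0$ combined with the $\l^{-1/2}$ estimate for each monomial $\m^j$, $j\geq 2\n$) yields $|I(\l,\f)|\lesssim \l^{-\n-1/2}$. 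One must be a little careful that the amplitude is only $C^\infty$ in $\m$ on $\R$ with a possibly non-integer order of vanishing encoded in the derivative bounds rather than literal factorization; the clean way is to dyadically decompose $\{|\m-\m_0(\f)|\sim 2^{-j}\}$, estimate each piece by van der Corput ($\lesssim \l^{-1/2}$ times the sup of $\c$ times... ) using $|\pa_\m^2 S|\geq c$, and sum the geometric series in $j$, where the sup of $|\c|$ on the $j$-th shell is $\lesssim \max(2^{-j},\l^{-1/2})^{2\n}$. Summing gives the claimed $\l^{-\n-1/2}$.

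The second main step is the regime $\l\f^2\gtrsim 1$. Now split $I=I_{\mathrm{near}}+I_{\mathrm{far}}$ where $I_{\mathrm{near}}$ integrates over $\{\m\leq \f/2\}$ (say) and $I_{\mathrm{far}}$ over $\{\m\geq \f/2\}$, so that the critical point $\m_0(\f)\sim\f$ lies comfortably inside the ``far'' region. On $I_{\mathrm{near}}$ the phase has no critical point and $|\pa_\m S|\gtrsim |\m-\m_0(\f)|\gtrsim \f$ by $|\pa_\m^2 S|\geq c$; repeated integration by parts (Lemma~\ref{Leibnizrule} for the Leibniz bookkeeping) against $(\i\l\pa_\m S)^{-1}\pa_\m$, using the amplitude derivative bounds $|\pa_\m^\a\c|\lesssim \m^{2\n-\a}(1+\l\f\m)^{-\n}$ and the elementary fact that each integration by parts costs a factor $\lesssim (\l\f\cdot\m)^{-1}$ worth... more precisely costs $(\l\f)^{-1}$ while the integrand already carries $(1+\l\f\m)^{-\n}$, gives decay faster than any power of $\l\f^2\gtrsim \l^{-0}$... concretely one gets $|I_{\mathrm{near}}|\lesssim_N \f^{2\n+1}(1+\l\f^2)^{-N}\lesssim_N \l^{-\n-1/2}$ by choosing $N$ large, absorbing the polynomial $\f$-powers. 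On $I_{\mathrm{far}}$, where $\m\sim\f$, the amplitude satisfies $|\pa_\m^\a\c|\lesssim \f^{2\n-\a}(1+\l\f^2)^{-\n}\sim \f^{2\n-\a}(\l\f^2)^{-\n}$, i.e.\ after rescaling $\m=\f\sigma$ it is a symbol of order $0$ in $\sigma$ times the constant $\f^{2\n}(\l\f^2)^{-\n}=\l^{-\n}$; the rescaled phase $\sigma\mapsto S(\f\sigma,\f)$ has second derivative $\f^2\pa_\m^2 S\sim\f^2$ and one non-degenerate critical point, so ordinary stationary phase gives $|I_{\mathrm{far}}|\lesssim \f\cdot (\l\f^2)^{-1/2}\cdot \l^{-\n}=\l^{-\n-1/2}$. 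Combining the two regimes and the near/far split finishes the proof.

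\textbf{Main obstacle.} The delicate point is the uniformity of the stationary-phase estimate for $I_{\mathrm{far}}$ as $\f$ varies: after the rescaling $\m=\f\sigma$ the second derivative of the phase degenerates like $\f^2$, which naively would \emph{worsen} the stationary-phase gain, and it is precisely the compensating gain $\f^{2\n}$ from the amplitude (which is $\f^{2\n}$ at the critical point, not merely $\leq 1$) together with the extra $(\l\f^2)^{-\n}$ that must be tracked exactly so that all powers of $\f$ cancel and only $\l^{-\n-1/2}$ survives. Equivalently, without rescaling: one applies stationary phase in the original variable on a window of width $\sim\f$ around $\m_0(\f)$, gets the standard $\l^{-1/2}$ times $|\c(\m_0)|\sim \f^{2\n}(\l\f^2)^{-\n}$, but must also control the contribution of the tail of the window (where $\c$ is larger per the order-$2\n$ homogeneity) — this is where the dyadic shell argument from Step~1, now centered at $\m_0(\f)\sim\f$ and truncated below at $\m\sim\f/2$, has to be rerun with the $(1+\l\f\m)^{-\n}$ factor present and shown to still sum to $\l^{-\n-1/2}$. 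Handling the borderline scale $\l\f^2\sim 1$ where the two regimes meet requires that the two estimates agree up to constants there, which they do since both give $\l^{-\n-1/2}$; making this matching clean (rather than losing a logarithm) is the last thing to check.
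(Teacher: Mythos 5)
Your overall architecture is the same as the paper's: you derive $\m_0(\f)\sim\f$ from $|\pa_\m\pa_\f S|\geq C$, $|\pa_\m^2S|\geq c$ and $\m_0(0)=0$, split according to $\l\f^2\lesssim 1$ versus $\l\f^2\gtrsim 1$ (the paper's split $\m_0(\f)\lessgtr 2^{-1}\l^{-1/2}$ is the same thing), and in the second regime rescale $\m=\f\sigma$ and apply stationary phase near the critical point with amplitude of size $\l^{-\n}$, getting $\f\cdot(\l\f^2)^{-1/2}\cdot\l^{-\n}=\l^{-\n-\frac{1}{2}}$; this is exactly the paper's term $I_2$, handled by Lemma~\ref{stphaselem}. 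The problem is the mechanism you invoke \emph{away} from the critical point, on which you lean both in the first regime and for the ``tail of the window'' in the second: dyadic shells $|\m-\m_0(\f)|\sim 2^{-j}$, the second-derivative van der Corput bound on each shell, and ``sum the geometric series.'' The second-derivative test gives $\l^{-1/2}\sup_{\rm shell}|\c|$ per shell with no gain in $j$ beyond the size of the amplitude, and on the outermost shells ($2^{-j}\sim 1$) the amplitude is $O(1)$ when $\f\lesssim\l^{-1/2}$ (and only $O((\l\f)^{-\n})$ in the second regime), so the sum is dominated by those shells and yields only $\l^{-1/2}$ (resp. $\l^{-1/2}(\l\f)^{-\n}$), which falls short of $\l^{-\n-\frac{1}{2}}$ for every $\n>0$. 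In short, the series you propose to sum is not the one you need.

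What rescues the region away from the critical point is not curvature but non-stationarity: there $|\pa_\m S(\m,\f)|\geq c\,|\m-\m_0(\f)|$, so $N$ integrations by parts gain $(\l 2^{-2j})^{-N}$ on the shell at distance $2^{-j}$; equivalently, after first removing $\m\leq\l^{-\frac{1}{2}}$ by the trivial bound $|\c|\lesssim\m^{2\n}$ (contribution $\lesssim\l^{-\n-\frac{1}{2}}$), one has on the remainder $|\pa_\m S|\gtrsim\m$ and $N$-fold integration by parts gives $\l^{-N}\int_{\l^{-1/2}}^1\m^{2\n-2N}{\rm d}\m\lesssim\l^{-\n-\frac{1}{2}}$ for $N>\n+1$. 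This is precisely the paper's route: the preliminary cutoff at $\m\leq\l^{-\frac{1}{2}}$, case (i) by this weighted integration by parts, and, after the rescaling in case (ii), the pieces $I_1$ and $I_3$ handled by Lemma~\ref{singularasym} and repeated integration by parts, with Lemma~\ref{stphaselem} reserved only for the window around the rescaled critical point. The same correction is what resolves the tail issue you flag in your ``main obstacle'' paragraph. A smaller inaccuracy of the same flavour: your near-region bound $\f^{2\n+1}(1+\l\f^2)^{-N}$ does not follow from naive integration by parts alone, since differentiating $\c$ costs $\m^{-1}$, not $\f^{-1}$, near $\m=0$; the preliminary removal of $\m\lesssim\l^{-\frac{1}{2}}$ (or explicit use of the factor $(1+\l\f\m)^{-\n}$) is needed to close that estimate as well.
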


\begin{Remark}\quad
\begin{itemize}\itemsep=0pt
\item[(1)] Without the assumption $\m_0(0)=0$, we obtain \smash{$I(\l,\f)\lesssim \l^{-\frac{1}{2}}$} only. The additional decay~$\l^{-\n}$ comes from the factor $(1+\l \f\m)^{-\n}$ in the assumption of $\c$ and the fact that $\m_0(0)=0$ as is mentioned before.

\item[(2)] We observe that the critical point $\m_0(\f)$ of $S$ is close to $0$ if $\f\sim 0$. There we take advantage of the assumption that the symbol $\c$ vanishes at zero with order $2\n$ like $|\c(\m,\l,\f)|\lesssim \m^{2\n}$. On the other hand, if $\f$ is away from zero, then the critical point $\m_0(\f)$ is also away from zero. In this case, $\c$ itself decays like $\l^{-\n}$ near the critical point (due to $|\f|\gtrsim 1$ and~${|\m_0(\f)|\gtrsim 1}$), which makes us to prove the improved decay. The difficulty here is to deal with the case where $\f$ is small enough but depending on $\l$.

\item[(3)]
The critical point of $S(\cdot,\f)$ is unique due to the condition $\big|\pa_{\m}^2S(\m,\f)\big|\geq c$ in this case.
\end{itemize}
\end{Remark}

\begin{proof}[Proof of Proposition~\ref{Stphasemovecrit}]
We may assume \smash{$\supp \c(\cdot,\l,\f)\subset \bigl[\l^{-\frac{1}{2}},1\bigr]$}. In fact, we set
\begin{align*}
\c'(\m,\l,\f)=\chi\bigl(\l^{\frac{1}{2}}\m\bigr)\c(\m,\l,\f),
\end{align*}
where $\chi\in C^{\infty}([0,\infty),\R)$ satisfies $\chi(\m)=1$ on $\m\leq 1$ and $\chi(\m)=0$ for $\m\geq 2$. Using the bound~${|\c(\m,\l,\f)|\lesssim \m^{2\n}}$, we have
\begin{align*}
\left|\int_{\R}{\rm e}^{{\rm i}\l S(\m,\f)}\c'(\m,\l,\f){\rm d}\m\right|\lesssim \l^{-\n}\left|\int_{\R}\chi\bigl(\l^{\frac{1}{2}}\m\bigr){\rm d}\m\right|\lesssim \l^{-\n-\frac{1}{2}}.
\end{align*}
Hence, we can replace $\c$ by $\c-\c'$, where we note that $\c-\c'$ satisfies
\begin{align*}
\supp (\c-\c')(\cdot,\l,\f)\subset\bigl[\l^{-\frac{1}{2}},1\bigr],\qquad |\pa_{\m}^{\a}(\c-\c')(\m,\l,\f)|\leq C_{\a}\m^{2\n-\a}(1+\l \f\m)^{-\n}.
\end{align*}
In the following, we assume \smash{$\supp \c(\cdot,\l,\f)\subset \bigl[\l^{-\frac{1}{2}},1\bigr]$}.

 $(i)$ First, we consider the case $\f\in [0,1]$ satisfying \smash{$\m_0(\f)\leq 2^{-1}\l^{-\frac{1}{2}}$}.
Since the signature of~$\pa_{\m}^2S(\m,\f)$ does not change for \smash{$\m\in \bigl[\l^{-\frac{1}{2}},1\bigr]$} and $\f\in [0,1]$, we have
\[
|\pa_{\m}S(\m,\f)|=\left|\int_{\m_0(\f)}^{\m}\pa_{\m}^2S(\m',\f){\rm d}\m'\right|=\int_{\m_0(\f)}^{\m}\big|\pa_{\m}^2S(\m',\f)\big|{\rm d}\m'\geq c(\m-\m_0(\f))\geq 2^{-1}c\m
\]
 for \smash{$\m\in \bigl[\l^{-\frac{1}{2}},1\bigr]$}. By integrating by parts and using \smash{$\supp \c(\cdot,\l,\f)\subset \bigl[\l^{-\frac{1}{2}},1\bigr]$}, we have
\begin{align*}
|I(\l,\f)|=\left|(-{\rm i}\l)^{-N}\int_{\R}{\rm e}^{{\rm i}\l S(\m,\f)} L^N\c(\m,\l,\f){\rm d}\m\right|\leq \l^{-N}\int_{\l^{-\frac{1}{2}}}^1|L^N\c(\m,\l,\f)|{\rm d}\m,
\end{align*}
where $L=\pa_{\m}\circ (\pa_{\m}S)(\m,\f)^{-1}$ and we take $N>\n+1$. We observe from the assumption and Lemma \ref{Leibnizrule} that $\big|L^N\c(\m,\l,\f)\big|\lesssim \m^{2\n-2N}$, which leads to
\begin{align*}
|I(\l,\f)|\lesssim \l^{-N}\int_{\l^{-\frac{1}{2}}}^1\m^{2\n-2N}{\rm d}\m\lesssim \l^{-\n-\frac{1}{2}}.
\end{align*}

 $(ii)$ Next, we consider the case $\f\in [0,1]$ satisfying \smash{$\m_0(\f)\in \bigl[ 2^{-1}\l^{-\frac{1}{2}}, 1\bigr]$}.
Differentiating $(\pa_{\m}S)(\m_0(\f),\f)=0$ with respect to $\f$, we have
\begin{align*}
|\m_0'(\f)|=\bigl|-(\pa_{\m}\pa_{\f}S)(\m_0(\f),\f)/\pa_{\m}^2S(\m_0(\f),\f)\bigr|\sim 1,
\end{align*}
and hence $\m_0(\f)\sim \f$ for $\f\in [0,1]$ due to $\m_0(0)=0$. Now the assumption \smash{$\m_0(\f)\geq 2^{-1}\l^{-\frac{1}{2}}$} yields \smash{$\f\gtrsim \l^{-\frac{1}{2}}$}.

By scaling, we have
\begin{align*}
I(\l,\f)=\int_{\R}{\rm e}^{{\rm i}\l S(\m,\f)}\c(\m,\l,\f){\rm d}\m=\m_0(\f)\l^{-\n}\int_{\R}{\rm e}^{{\rm i}\l \m_0(\f)^2S_{\f}(\m)}\c_{\f}(\m,\l){\rm d}\m,
\end{align*}
where we set
\begin{align*}
S_{\f}(\m)=\m_0(\f)^{-2}S(\m_0(\f)\m,\f),\qquad \c_{\f}(\m,\l)=\l^{\n}\c(\m_0(\f)\m,\l,\f).
\end{align*}
We note that $\m_0(\f)$ is a unique critical point of $S(\m,\f)$.
Let $\g_1,\g_2,\g_3\in C^{\infty}(\R;[0,1])$ such that~${\g_1+\g_2+\g_3=1}$, $\supp \g_1\subset \bigl(-\infty,\frac{3}{4}\bigr]$, $\supp \g_2\subset \bigl[\frac{1}{2},\frac{3}{2}\bigr]$ and $\supp \g_3\subset \bigl[\frac{5}{4},\infty\bigr)$. We write
\begin{align*}
I(\l,\f)
=\m_0(\f)\l^{-\n}\sum_{j=1}^3\int_{\R}{\rm e}^{{\rm i}\l \m_0(\f)^2S_{\f}(\m)}\c_{\f}(\m,\l)\g_j(\m){\rm d}\m
=\m_0(\f)\l^{-\n}\sum_{j=1}^3I_j(\l,\f).
\end{align*}

First, we deal with $I_2(\l,\f)$.
Now we see $|\pa_{\m}S(\m,\f)|\geq c|\m-\m_0(\f)|$ for $\m\in [0,1]$ and $\f\in [0,1]$. Then we have
\begin{align*}
|\pa_{\m}S_{\f}(\m)|=\m_0(\f)^{-1}\cdot |(\pa_{\m}S)(\m_0(\f)\m,\f)|\geq c|\m-1|
\end{align*}
for $\m\in \supp \c_{\f}(\cdot,\l)$ and $\f\in (0,1]$.
Moreover, for $\a\in\mathbb{N}$ and $\a'\in \mathbb{N}\setminus\{0\}$, we have
\begin{gather*}
|\pa_{\m}^{\a}\c_{\f}(\m,\l)|=\l^{\n} \m_0(\f)^{\a}|(\pa_{\m}^{\a}\c)(\m_0(\f)\m,\l,\f)|\\
\phantom{|\pa_{\m}^{\a}\c_{\f}(\m,\l)|}{}\lesssim \l^{\n}\m_0(\f)^{\a} (\m_0(\f) \m)^{2\n-\a}\bigl(1+\l \m_0(\f)^2 \m\bigr)^{-\n}\lesssim \m^{\n-\a},\\
\supp (\c_{\f}(\cdot,\l))\subset \big\{\m_0(\f)^{-1}\l^{-\frac{1}{2}}\leq \m\leq \m_0(\f)^{-1}\big\},\qquad \big|\pa_{\m}^{1+\a'}S_{\f}(\m)\big|\lesssim |\m_0(\f)|^{\a'-1}
\end{gather*}
Thus the stationary phase theorem (see Lemma \ref{stphaselem} with $k=2$, $j=0$ and $x_0=1$) implies
\begin{align*}
\m_0(\f)\l^{-\n}|I_2(\l,\f)|
\lesssim \m_0(\f)\l^{-\n}\cdot \bigl(\l \m_0(\f)^2\bigr)^{-\frac{1}{2}}=\l^{-\frac{1}{2}-\n},
\end{align*}
where we use $\l \m_0(\f)^2\gtrsim 1$.

Next, we consider the estimate of $I_{1}(\l,\f)$. We note that $|\pa_{\m}S_{\f}(\m)|\gtrsim 1$ for $\m\in \supp \g_1$.
Then the non-stationary phase theorem (see Lemma \ref{singularasym} with $\m=\n$) implies
\begin{align*}
\m_0(\f)\l^{-\n}|I_1(\l,\f)|\lesssim \m_0(\f)\l^{-\n}\cdot \bigl(\l \m_0(\f)^2\bigr)^{-\n-1}\lesssim \f\l^{-\n}\cdot \bigl(\l \f^2\bigr)^{-\frac{1}{2}}=\l^{-\frac{1}{2}-\n},
\end{align*}
where we use $\l \m_0(\f)^2\gtrsim 1$.

Finally, we consider the term $I_{3}(\l,\f)$. We note that $|\pa_{\m}S_{\f}(\m)|\gtrsim (|\m|+1)$ for $\m\in \supp \g_3$. Then, by integrating by parts $N(>1)$ times, we have
\begin{align*}
\m_0(\f)\l^{-\n}|I_3(\l,\f)|\lesssim \m_0(\f)\l^{-\n}\cdot \bigl(\l \m_0(\f)^2\bigr)^{-N}\lesssim \m_0(\f)\l^{-\n}\cdot \bigl(\l \m_0(\f)^2\bigr)^{-\frac{1}{2}}=\l^{-\frac{1}{2}-\n},
\end{align*}
where we use $\l \m_0(\f)^2\gtrsim 1$. This completes the proof.
\end{proof}

\section[Asymptotic expansion of special functions and decomposition of the sum]{Asymptotic expansion of special functions\\ and decomposition of the sum}

\subsection{Asymptotic expansion of Bessel functions}

Here we discuss asymptotic behavior of Bessel functions. For our purpose, we need symbol-type estimates, which is the estimates for higher order derivatives of the amplitudes. Such estimates are studied in \cite[Theorem 9.1]{IJ} up to the second derivatives. However, their method cannot be applied to estimates for higher-derivatives. Their theorem is not sufficient for our purpose since we need estimates whose order depends on $\n$ here.
Now we write down the statement here and give its proof in Appendix~\ref{AppBessel}.

We define
\begin{align}\label{hdef}
h_1(z)=\sqrt{1-z^2}-z\cos^{-1} z,
\end{align}
where $\cos^{-1} z\in [0,\pi/2]$ for $0\leq z\leq 1$.

\begin{Proposition}\label{Besselasymp}\quad
\begin{itemize}\itemsep=0pt
\item[$(i)$] There are smooth functions $a_{\pm,y}\colon \bigl[0,y-\frac{1}{2}y^{\frac{1}{3}}\bigr]\to \mathbb{C}$ such that
\begin{align*}
J_{\m}(y)=y^{-\frac{1}{4}}(y-\m)^{-\frac{1}{4}}\bigl(a_{+,y}(\m){\rm e}^{{\rm i}yh_1(\frac{\m}{y})}+a_{-,y}(\m){\rm e}^{-{\rm i}yh_1(\frac{\m}{y})} \bigr)
\end{align*}
and for each $\a\in\mathbb{N}$, there exists $C_{\a}>0$ such that
$
|\pa_{\m}^{\a}a_{\pm,y}(\m)|\leq C_{\a}(y-\m)^{-\a}$, for $ y\geq 8$, \smash{$ \m\in \bigl[0,y-\frac{1}{2}y^{\frac{1}{3}}\bigr]$}.
\item[$(ii)$] Let $N>0$ and $\a\in\mathbb{N}$. Then there exist $C_{\a}>0$ and $C_{\a N}>0$ such that
\begin{align*}
|\pa_{\m}^{\a}J_{\m}(y)|\leq \begin{cases}C_{\a}y^{-\frac{1+\a}{3}} \qquad \text{for}\  y\geq 8,\  \m\in \bigl[y-2y^{\frac{1}{3}},y+2y^{\frac{1}{3}}\bigr],\\
C_{\a N}\m^{-\frac{1}{4}}(\m-y)^{-\frac{1}{4}-\a} \bigl(y^{-1}(\m-y)^3 \bigr)^{-N} \\
\qquad \text{for}\  y\geq 8,\  \m\in \bigl[y+\frac{1}{2}y^{\frac{1}{3}},\infty\bigr).
\end{cases}
\end{align*}
\end{itemize}
\end{Proposition}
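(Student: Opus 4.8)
The three cases of the statement correspond to three size regimes of $\mu$ relative to $y$, and in each I would run an appropriate (non)stationary phase argument on the classical integral representation ${\pi J_\mu(y)=\re\int_0^\pi {\rm e}^{{\rm i}y\phi_z(\theta)}{\rm d}\theta-\sin(\pi\mu)\int_0^\infty {\rm e}^{-y\sinh t-\mu t}{\rm d}t}$, where $\phi_z(\theta)=\sin\theta-z\theta$ and $z=\mu/y$. For $0\le z<1$ the phase $\phi_z$ has a unique critical point $\theta_0=\cos^{-1}z\in[0,\pi/2]$ with $\phi_z(\theta_0)=\sqrt{1-z^2}-z\cos^{-1}z=h_1(z)$ (see \eqref{hdef}) and $\pa_\theta^2\phi_z(\theta_0)=-\sqrt{1-z^2}$; for $z\ge1$ there is no real critical point. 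The effective large parameter near the turning point is $\sim y(1-z^2)^{3/2}\sim y^{-1/2}(y-\mu)^{3/2}$, which is exactly why the thresholds in the statement sit at $y-\mu\sim y^{1/3}$. The ``Bessel tail term'' $\sin(\pi\mu)\int_0^\infty{\rm e}^{-y\sinh t-\mu t}{\rm d}t$ is smooth in $\mu$ with $\pa_\mu^\alpha$ of size $O((y+\mu)^{-1-\alpha})$, hence in every regime it and its derivatives are dominated by the asserted main bound and can be absorbed into $a_{\pm,y}$ or into the error; I will not mention it again.

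For case $(i)$, on $\mu\in[0,y-\tfrac12 y^{1/3}]$ one has $\theta_0\gtrsim y^{-1/3}$ and $\sqrt{1-z^2}\sim y^{-1/2}(y-\mu)^{1/2}$, so $\theta_0$ is non-degenerate on the available scale. Away from $\theta_0$ I would use non-stationary phase (Lemma \ref{singularasym}), getting contributions of the claimed form; near $\theta_0$ I would apply the stationary phase theorem with parameters (Lemma \ref{stphaselem}), or equivalently reduce by the Morse lemma $\phi_z(\theta)-\phi_z(\theta_0)=-\tfrac12 u^2$ (with $u=u(\theta,z)$ smooth, $u(\theta_0,z)=0$, $\pa_\theta u(\theta_0,z)=(1-z^2)^{1/4}$) to a Fresnel integral $\int {\rm e}^{-{\rm i}yu^2/2}g(u,z){\rm d}u$; Taylor expanding $g(\cdot,z)$ at $0$ and integrating by parts yields an asymptotic series in $y^{-1}$ with leading coefficient $\propto(1-z^2)^{-1/4}$. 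Factoring out ${\rm e}^{{\rm i}yh_1(z)}$ and writing $y^{-1/2}(1-z^2)^{-1/4}=y^{-1/4}(y-\mu)^{-1/4}\cdot(y/(y+\mu))^{1/4}$ with the last factor bounded and smooth gives the stated shape, with $a_{\pm,y}(\mu)$ the remaining symbol. For the derivative bounds the two things to track are: $\pa_\mu=y^{-1}\pa_z$ applied to a power of $1-z^2$ costs a relative factor $y^{-1}(1-z^2)^{-1}=y/((y-\mu)(y+\mu))\lesssim (y-\mu)^{-1}$ per derivative; and $\pa_\mu$ applied to the extracted phase ${\rm e}^{\mp{\rm i}yh_1(\mu/y)}$ costs only $O(1)$ — the reason for pulling it out — because $h_1'(z)=-\cos^{-1}z$ is bounded (combine with the Leibniz-type estimate, Lemma \ref{Leibnizrule}). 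A bookkeeping then shows that on $\mu\le y-\tfrac12 y^{1/3}$ every $y^{-j}$-term is dominated by the leading one and the $N$-th remainder has the same shape, giving $|\pa_\mu^\alpha a_{\pm,y}(\mu)|\le C_\alpha(y-\mu)^{-\alpha}$ with $C_\alpha$ independent of $y$.

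For case $(ii)$: in the turning-point window $\mu\in[y-2y^{1/3},y+2y^{1/3}]$ I would set $\mu=y+\tau y^{1/3}$ with $|\tau|\le2$ and rescale $\theta=y^{-1/3}s$; since $\sin\theta-\theta=-\theta^3/6+O(\theta^5)$ one gets $y\phi_z(\theta)=-s^3/6-\tau s+O(y^{-2/3}s^5)$, so up to errors smaller by powers of $y^{-1/3}$ with controlled $\tau$-derivatives the integral reduces to $y^{-1/3}$ times an Airy-type integral $\int {\rm e}^{{\rm i}(s^3/6+\tau s)}\varrho(s){\rm d}s$; as the Airy function and its derivatives are bounded for $|\tau|\le2$ and $\pa_\mu=y^{-1/3}\pa_\tau$, this yields $|\pa_\mu^\alpha J_\mu(y)|\lesssim y^{-(1+\alpha)/3}$. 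In the decay region $\mu\ge y+\tfrac12 y^{1/3}$ ($z>1$) there is no real critical point and $|\pa_\theta\phi_z(\theta)|=|z-\cos\theta|\ge z-1$; splitting $[0,\pi]$ at $\theta\sim(z-1)^{1/2}$ (where $|\pa_\theta\phi_z|\sim z-1$, against $|\pa_\theta\phi_z|\sim\theta^2$ beyond it) and integrating by parts $N$ times — equivalently, deforming through the imaginary saddle $\theta={\rm i}\cosh^{-1}z$ and using steepest descent — would produce the Debye prefactor $(\mu^2-y^2)^{-1/4}\sim\mu^{-1/4}(\mu-y)^{-1/4}$ times a gain $(y^{-1}(\mu-y)^3)^{-N}$ per $N$ steps, each $\pa_\mu$ costing a further $(\mu-y)^{-1}$; this is the second estimate. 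This route forfeits the genuine exponential decay the contour method of \cite{IJ} would give, but the polynomial bound is all that is needed afterwards.

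The main obstacle will be uniformity as $\mu\to y$ from either side: the Morse data $u(\theta,z)$, its inverse, the Jacobian $g(u,z)$, and every coefficient of the $y^{-1}$-expansion degenerate at a definite rate in $1-z^2\sim(y-\mu)/y$, and one must check that no $\mu$-differentiation worsens this rate past $(y-\mu)^{-\alpha}$; the cutoffs at $y-\mu\sim y^{1/3}$ are precisely what make the WKB, Airy and exponential models simultaneously valid on overlapping ranges, so the three pieces glue. The analogous delicate point in the decay regime is interpolating between the $\pa_\theta\phi_z\sim z-1$ and $\pa_\theta\phi_z\sim\theta^2$ behaviours near $\theta=0$ so that the repeated integration by parts produces exactly the factor $(y^{-1}(\mu-y)^3)^{-N}$ and not something weaker.
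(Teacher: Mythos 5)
Your plan breaks down at the step you dismiss in your first paragraph. Starting from Schl\"afli's formula forces you to control two pieces that are only polynomially small and that you never actually estimate: the tail $\frac{\sin(\pi\mu)}{\pi}\int_0^\infty {\rm e}^{-y\sinh t-\mu t}\,{\rm d}t$ and the boundary contributions at $w=\pi$ (and at $t=0$) that every integration by parts on $\int_0^\pi$ produces. The tail has a positive, non-oscillatory integrand, so it is genuinely of size $\sim \sin(\pi\mu)\,(y+\mu)^{-1}$, and its $\mu$-derivatives are in general no smaller, since they may fall on $\sin(\pi\mu)$ (so the claimed $O((y+\mu)^{-1-\alpha})$ is already incorrect). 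In the decay region the asserted bound is $\mu^{-\frac14}(\mu-y)^{-\frac14-\alpha}\bigl(y^{-1}(\mu-y)^3\bigr)^{-N}$ for \emph{every} $N$, which for $\mu\ge 2y$ and $N\ge 1$ is $O\bigl(y^{-\frac52-\alpha}\bigr)$, far below $(y+\mu)^{-1}$; in the turning-point window the bound $y^{-\frac{1+\alpha}{3}}$ with $\alpha\ge 3$ is likewise smaller than $y^{-1}$. Hence the tail is not dominated by the asserted estimates and cannot simply be ``absorbed''; it must cancel, order by order, against the endpoint terms at $w=\pi$ of the oscillatory integral---terms which your repeated integration by parts in the decay regime also generates (each step gains only powers of $(y+\mu)^{-1}$ or $(\mu-y)^{-1}$ at the endpoints, never the factor $\bigl(y^{-1}(\mu-y)^3\bigr)^{-N}$) and which you never address. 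Exhibiting this cancellation requires computing the boundary terms at $w=\pi$ and $t=0$ explicitly; this is exactly the obstruction pointed out in the Remark after Lemma \ref{appBeselreduce}.

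This is precisely why the paper does not work with Schl\"afli's formula: it deforms the Hankel contour to obtain the representation \eqref{Besselgoodrep}, $J_{\mu}(y)=\frac{1}{2\pi}\int_{\R}{\rm e}^{{\rm i}(y\sin w-\mu w)}\chi_1(w)\,{\rm d}w+R_0(\mu,y)$ with a compactly supported even cutoff $\chi_1$ and a remainder satisfying $|\pa_{\mu}^{\alpha}R_0(\mu,y)|\le C_{N\alpha}(1+y+\mu)^{-N}$, so that there are no endpoint contributions and the discarded term really is negligible in all three regimes. After that, your outline (Morse-lemma/stationary phase with symbol bounds near the critical point $\cos^{-1}(\mu/y)$ with phase value $h_1(\mu/y)$ as in \eqref{hdef}, an Airy-type analysis in the window $|\mu-y|\lesssim y^{1/3}$, and repeated integration by parts for $\mu>y$) is in the same spirit as the paper's Lemma \ref{appstphasevai} applied with $a=y-\mu$, $b=y$, $f(w)=\sin w-w$, where the rescaling $s=|a|^{1/2}b^{-1/2}$, $\lambda=|a|^{3/2}b^{-1/2}$ delivers the uniformity near the turning point that you flag as the main obstacle, together with the derivative bounds $|\pa_a^{\alpha}I_{0,\pm}(a,b)|\lesssim a^{-\frac14-\alpha}b^{-\frac14}$ giving part~(i). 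But without repairing the representation step---either by proving the cancellation of the $w=\pi$ boundary terms against the $\sin(\pi\mu)$ tail, or by adopting a cutoff representation as in Lemma \ref{appBeselreduce}---the bounds you claim for the individual pieces in case~(ii) are false as stated.
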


\begin{Remark}\quad
\begin{itemize}\itemsep=0pt
\item[(1)] The estimate for \smash{$\m\in \bigl[y+\frac{1}{2}y^{\frac{1}{3}},\infty\bigr)$} is weaker than the one in \cite[Theorem 9.1]{IJ} at least for $\a=0,1,2$. In fact, \cite[Theorem 9.1]{IJ} shows that $J_{\m}(y)$ and its derivatives decay exponentially with respect to $y^{-1}(\m-y)^3$ (due to the term ${\rm e}^{-yh_2(\m/y)}$ in \cite{IJ}) although we just obtain the polynomial decay here.

\item[(2)] Another drawback of this proposition compared to \cite[Theorem 9.1]{IJ} is not to give symbol-type estimates of the $Y$-Bessel functions.
\end{itemize}
\end{Remark}

In particular, we have a uniform bound
\begin{align}\label{Besselunif}
|J_{\m}(y)|\leq Cy^{-\frac{1}{3}}\qquad y\geq 8,\quad \m\geq 0.
\end{align}

\subsection{Asymptotic expansion of Gegenbauer polynomials}

The following proposition is a refinement of \cite[Lemma 10.2]{IJ} and its proof is given in Appendix~\ref{appasymgegensubsec}.

\begin{Proposition}\label{Gegenasymp}
Let $\n\geq 0$.
There are functions $g_{+}(m,\f), g_{-}(m,\f)$ which are smooth respect to $m\in [1,\infty)$ and a function $r(m,\f)$ defined for $m\in\mathbb{N}^*$ such that
\begin{align*}
\n^{-1}C_m^{\n}(\cos\f)=\sum_{\pm}g_{\n,\pm}(m,\f){\rm e}^{\pm {\rm i}m\f}+r(m,\f)
\end{align*}
for $m\in \mathbb{N}^*$ $($we interpret the left-hand side for $\n=0$ as \smash{$\lim_{\n\searrow 0}\n^{-1}C_m^{\n}(\cos \f))$}, and
\begin{align*}
&|\pa_m^{\a}g_{\n,\pm}(m,\f)|\leq
C_{\a,\n}m^{2\n-1-\a}(1+m\sin\f)^{-\n},\qquad m\geq 1,\\
&|r(m,\f)|\leq C_{N,\n}m^{-N},\qquad m\in\mathbb{N}^*
\end{align*}
for each $N>0$, $\a\in\mathbb{N}$, and $\f\in [0,\pi]$ with constants $C_{\a,\n}, C_{N,\n}>0$.
\end{Proposition}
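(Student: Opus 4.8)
## Proof Proposal for Proposition \ref{Gegenasymp}

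The plan is to exploit a classical integral (or generating-function) representation of the Gegenbauer polynomial and then feed it into a stationary-phase / non-stationary-phase analysis, exactly in the spirit of the proof of Theorem \ref{keythm} but now in a purely one-variable setting. Starting from the Laplace-type integral
\[
C_m^{\n}(\cos\f)=\frac{\Gamma(m+2\n)}{m!\,\Gamma(2\n)\,\Gamma(\tfrac12)\Gamma(\n)}\cdot\Gamma\!\left(\n+\tfrac12\right)\int_0^{\pi}(\cos\f+{\rm i}\sin\f\cos\theta)^m(\sin\theta)^{2\n-1}\,{\rm d}\theta,
\]
valid for $\n>0$ (and extended to $\n=0$ by the limit as in the statement), one writes $\cos\f+{\rm i}\sin\f\cos\theta=e^{{\rm i}\psi(\theta,\f)}\rho(\theta,\f)$ with $\rho=|\,\cos\f+{\rm i}\sin\f\cos\theta\,|=\sqrt{1-\sin^2\f\sin^2\theta}\le 1$, and observes that $\rho=1$ precisely at $\theta=0,\pi$, where also $\psi=\pm\f$. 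Thus the integrand $e^{{\rm i}m\psi}\rho^m(\sin\theta)^{2\n-1}$ concentrates (after multiplying by the $m!/\Gamma(m+2\n)$-type prefactor, which is $\sim m^{1-2\n}$) near the two endpoints $\theta=0$ and $\theta=\pi$, which will produce the two oscillatory terms $g_{\n,\pm}(m,\f)e^{\pm{\rm i}m\f}$; the contribution away from the endpoints will be the rapidly decaying remainder $r(m,\f)$.

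Concretely, I would introduce a smooth partition of unity $1=\chi_+(\theta)+\chi_-(\theta)+\chi_0(\theta)$ on $[0,\pi]$ with $\chi_+$ supported near $0$, $\chi_-$ near $\pi$, and $\chi_0$ away from both endpoints. On $\supp\chi_0$ we have $\rho(\theta,\f)^m\le(1-c\sin^2\f)^{m}$; combined with repeated integration by parts in $\theta$ this is where the $m^{-N}$ bound for $r(m,\f)$ comes from (one must be slightly careful in the regime $\sin\f\to0$, but there $\rho$ is close to $1$ everywhere and instead one uses that $C_m^\n(\pm1)$ is an explicit polynomial in $m$, or simply that the whole integral is handled by the endpoint pieces). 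On $\supp\chi_+$ one changes variables $\theta\mapsto s$ by $s=\f-\psi(\theta,\f)$ (so that $s$ vanishes at $\theta=0$ to first order, using $\partial_\theta\psi|_{\theta=0}=0$ but $\partial_\theta^2\psi|_{\theta=0}\ne0$ — actually $\psi$ behaves like $\f-\tfrac12\sin\f\cos\f\,\theta^2+\dots$ near $\theta=0$, so the natural substitution is $s=\tfrac12\theta^2\sin\f\cos\f\cdot(1+\dots)$ or more robustly one applies the Morse lemma to $\psi$ near its endpoint critical point), extracting $e^{{\rm i}m\f}$ and reducing to $\int_0^\infty e^{-{\rm i}ms}A(s,\f)\,{\rm d}s$ with an amplitude $A$ that vanishes like $s^{\n-1/2}$ (coming from $(\sin\theta)^{2\n-1}{\rm d}\theta\sim \theta^{2\n-1}{\rm d}\theta \sim s^{\n-1}\,{\rm d}s$ up to constants and the $\sin\f$-weight). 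This last integral is a standard one-dimensional oscillatory integral whose size is $\sim m^{-\n-1/2}(1+m\sin\f)^{\,?}$ — here one must track the $\sin\f$-dependence of the change of variables carefully, since the Jacobian of $s\leftrightarrow\theta$ degenerates as $\sin\f\to0$; this bookkeeping is what produces the $(1+m\sin\f)^{-\n}$ factor (together with the overall $m^{2\n-1}$ from the gamma-prefactor) in the claimed bound $|\partial_m^\a g_{\n,\pm}|\le C_{\a,\n}m^{2\n-1-\a}(1+m\sin\f)^{-\n}$. The symbol-type estimates in $m$ follow because differentiating in $m$ either brings down a bounded factor $\log\rho\le0$ (harmless, and one uses $m|\log\rho|\lesssim 1$ on the relevant part after the substitution) or hits the amplitude, each such action costing one power of $m^{-1}$ by the usual oscillatory-integral scaling; I would invoke Lemma \ref{Leibnizrule} and the stationary-phase lemma (Lemma \ref{stphaselem}) to make this rigorous. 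The treatment of $\chi_-$ is identical with $\f\mapsto\pi-\f$, $\theta\mapsto\pi-\theta$, yielding the $e^{-{\rm i}m\f}$ term.

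The main obstacle I anticipate is the uniformity in $\f$, and in particular the transition region where $\sin\f$ is small but not zero (equivalently $\f$ near $0$ or $\pi$): there the two "endpoint" contributions $\theta\approx0$ and $\theta\approx\pi$ are no longer well-separated from each other or from the bulk, the phase $\psi(\theta,\f)$ becomes nearly constant ($\equiv\f$) across the whole interval, and the naive change of variables $s=\f-\psi$ degenerates. The clean way around this is to split into the two regimes $m\sin\f\gtrsim1$ and $m\sin\f\lesssim1$: in the first, the above stationary-phase analysis applies with all constants uniform; in the second, one simply bounds $|C_m^\n(\cos\f)|\le C_m^\n(1)=\binom{m+2\n-1}{m}\sim m^{2\n-1}/\Gamma(2\n)$ trivially, which is exactly the claimed bound when $(1+m\sin\f)^{-\n}\sim1$, and one absorbs the $e^{\pm{\rm i}m\f}$ terms and $r$ into this crude estimate by defining $g_{\n,\pm}$ via the smooth formulas coming from the $m\sin\f\gtrsim1$ construction, patched with a cutoff in the variable $m\sin\f$ so that they remain smooth in $m$. (This patching is precisely analogous to how, in the body of the paper, different asymptotic regimes of $J_\m(y)$ are glued; see Proposition \ref{Besselasymp}.) Verifying that the resulting $g_{\n,\pm}$ are genuinely smooth in $m\in[1,\infty)$ across the patching, and that the derivative bounds survive, is the last technical point; it is routine but needs the cutoff to be chosen as a function of the single combined variable $m\sin\f$ so that $\partial_m$ of the cutoff costs a factor $\sin\f\le(1+m\sin\f)/m\lesssim m^{-1}$ on its support, keeping the symbol estimates intact.
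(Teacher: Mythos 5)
Your overall strategy is the paper's: the same Laplace-type integral representation (your $\theta$-integral is the paper's $u$-integral after $u=\cos\theta$), a three-piece partition of unity isolating the two endpoint contributions from the bulk, a non-stationary/stationary phase estimate at the endpoints with the singular weight $(1-u^2)^{\nu-1}$ producing the factor $(1+m\sin\f)^{-\nu}$ (this is exactly the role of Lemma~\ref{singularasym} with large parameter $m\sin\f$), and Stirling-type asymptotics of the Gamma prefactor giving $m^{2\nu-1}$ (Proposition~\ref{Fnuestimate}). However, the two places you leave vague are precisely where the real work lies, and as written they are gaps. First, the proposition demands $g_{\nu,\pm}$ smooth in $m\in[1,\infty)$, so you must extend the integral representation to non-integer $m$; the integrand $(\cos\f+{\rm i}(\sin\f)u)^m$ is then multivalued, and one has to choose branches of the logarithm consistently and factor out $e^{\pm{\rm i}m\f}$ exactly so that the remaining phases $f_\pm(\f,u)$ vanish at $u=\pm1$ and are genuinely symbol-type in $m$. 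This branch issue is what forces the paper's case distinction $\f\in(\frac{\pi}{4},\frac{3}{4}\pi)$ versus $\f\in[0,\frac{\pi}{4}]\cup[\frac{3}{4}\pi,\pi]$ and the two functions $\log^{\pm}$; your proposal never addresses it, and without it "smooth in $m$" is not even meaningful (note in particular that near $\f=\pi$ you cannot repair things by multiplying a crude term by $e^{\pm{\rm i}m\f}$, since $\partial_m$ of that factor costs $\f\approx\pi$, not $m^{-1}$).

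Second, your treatment of the regime $\sin\f$ small is not a proof. The bulk piece is no longer exponentially small there (you acknowledge this), and your fallback — bound $|\nu^{-1}C_m^{\nu}(\cos\f)|\lesssim m^{2\nu-1}$ when $m\sin\f\lesssim1$ and "absorb" it via a cutoff in $m\sin\f$ — cannot yield the exact identity of the proposition: the leftover after cutting off your endpoint formulas has size $m^{2\nu-1}$, which is far too large to be put into $r(m,\f)=O(m^{-N})$, and to reassign it to $g_{\nu,+}$ you need a smooth-in-$m$, branch-based representation of that leftover with symbol bounds, i.e., exactly the construction you skipped. The paper's resolution is to keep the bulk piece as an honest oscillatory integral $E_{\nu}'(m,\f)$ with the $\log^{+}$ branch, attach it to $e^{{\rm i}m\f}$ in the decomposition \eqref{Gegendecom2}, and estimate it by integration by parts to get $m^{-\a}(1+m\sin\f)^{-N}$ uniformly down to $\sin\f=0$; no trivial size bound on the polynomial enters. (A minor further point: your intermediate claim that the endpoint integral is of size $m^{-\nu-\frac12}$ is off — with amplitude $\theta^{2\nu-1}$ and a quadratic phase the correct gain is $(m\sin\f)^{-\nu}$, matching Lemma~\ref{singularasym}; you flag the bookkeeping as incomplete, but this is the exponent the whole proposition hinges on.)
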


\begin{Remark}\quad
\begin{itemize}\itemsep=0pt
\item[(1)] In particular, for each $\e>0$, we have $|\pa_m^{\a}g_{\n,\pm}(m,\f)|\leq C_{\a,\n,\e}m^{\n-1-\a}$ uniformly in $\f\in [\e,\pi-\e]$.
\item[(2)] We do not impose that $g_{\n,\pm}$ are continuous with respect to $\f\in [0,\pi]$. In this paper, we just need uniform estimates of $g_{\n,\pm}$ in $\f\in [0,\pi]$.
\item[(3)]
In \cite[Lemma 10.2]{IJ}, a similar estimate for $\n^{-1}C_m^{\n}(\cos\f)$ divided by $C_m^{\n}(1)$ is given.
Here we also deal with the estimate for $C_m^{\n}(1)$.
Moreover, the ranges of the parameters are extended compared to there.
\end{itemize}
\end{Remark}

In particular, we have a uniform bound: For $\n\geq 0$, there exists $C>0$ such that
\begin{align}\label{Gegenunif}
\big|\n^{-1}C_m^{\n}(\cos \f)\big|\leq Cm^{2\n-1}\qquad m\in \mathbb{N}^*,\quad \f\in[0,\pi],
\end{align}
where the left-hand side is interpreted as $\big|\lim_{\n\searrow 0}\n^{-1}C_m^{\n}(\cos \f)\big|$ when $\n=0$.

\subsection{Decomposition of the sum}\label{subsecdecom}

In this subsection, we divide the sum $\sI(b,\nu;- {\rm i}y;\cos\f)$ into three parts according to the asymptotic behavior of the Bessel function.

We fix $b>0$ and set
\begin{align}\label{cbndef}
L_{b,\n}:=\Gamma(b\nu+1)2^{b\n}.
\end{align}
From \eqref{sIdefn}, we obtain the formula
\begin{align}\label{sumbesell}
\sI(b,\nu;- {\rm i}y;\cos\f)=&L_{b,\n}y^{-b\n}\sum_{m=0}^{\infty}{\rm e}^{-\frac{\pi}{2}bm{\rm i}}J_{b(m+\n)}(y)(m+\nu)\n^{-1}C^{\nu}_m(\cos\f).
\end{align}
In the following, we consider the case $y\gg 1$ for simplicity. The case $y\ll -1$ is similarly dealt with (see proof of Theorem \ref{keythm} in Section \ref{subsec:proof}).

We define
\begin{align}\label{muemudef}
\m(m):=b(m+\n).
\end{align}
Corresponding to the asymptotic behavior of the Bessel functions, we define
\begin{gather*}
\Omega_1=\left\{m\in[1,\infty)\mid 1\leq \m(m)\leq y-\frac{1}{2}y^{\frac{1}{3}}\right\},\\ \Omega_2=\big\{m\in[1,\infty)\mid y-2y^{\frac{1}{3}}\leq \m(m)\leq y+2y^{\frac{1}{3}}\big\},\\
\Omega_3=\left\{m\in[1,\infty)\mid \m(m)\geq y+\frac{1}{2}y^{\frac{1}{3}}\right\}.
\end{gather*}

\begin{Lemma}\label{Cutoff}
For $y\gg 1$,
there exist $\chi_0\in C_c^{\infty}(\R;[0,1])$ and $\chi_{j,y}\in C^{\infty}(\R;[0,1])$ $(1\le j\le 3)$ such that $\chi_0(m)=1$ for $\m(m)\leq 2$ or $m\leq 1$ and
\begin{align}
&\chi_0(m)+\sum_{j=1}^3\chi_{j,y}(m)=1\qquad \text{ for $m\geq 0$},\qquad \supp \chi_{j,y}\subset \Omega_j,\qquad |\pa_m^{\a}\chi_{2,y}(m)|\leq C_{\a}y^{-\frac{\a}{3}},\nonumber\\
&|\pa_m^{\a}\chi_{1,y}(m)|\leq C_{\a}\max(m^{-\a}, (y-\m(m))^{-\a}),\qquad |\pa_m^{\a}\chi_{3,y}(m)|\leq C_{\a}(\m(m)-y)^{-\a}\label{POUproper}.
\end{align}
\end{Lemma}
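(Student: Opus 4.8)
\textbf{Proof plan for Lemma~\ref{Cutoff}.}
The goal is to construct a partition of unity on $[0,\infty)$ (in the $m$-variable) subordinate to the three overlapping regions $\Omega_1,\Omega_2,\Omega_3$ plus a fixed compact piece near $m=0$, with symbol-type bounds on the derivatives that reflect the natural length scales: the scale $y^{1/3}$ in the transition zone $\Omega_2$, the scale $y-\m(m)$ near the lower transition, and the scale $\m(m)-y$ near the upper transition. Since $\m(m)=b(m+\n)$ is affine in $m$ with $\pa_m\m = b$ a positive constant, every estimate phrased in terms of $\m(m)$ can be freely converted to one in $m$, so the problem is really about building cutoffs adapted to the point $m_* := \m^{-1}(y) = y/b - \n$, which for $y\gg 1$ satisfies $m_*\sim y$.

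First I would fix an even cutoff $\chi\in C_c^{\infty}(\R;[0,1])$ with $\chi\equiv 1$ on $[-1,1]$ and $\supp\chi\subset(-2,2)$, and set $\psi = 1-\chi$. The compact cutoff near the origin is $\chi_0(m) := \chi(\m(m)/2)\cdot(\text{something that is }1\text{ for }m\le 1)$; concretely one can take $\chi_0(m) = \max(\chi(\m(m)/2), \chi(m))$ smoothed, or more cleanly just $\chi_0(m)=\chi(\m(m)/2)$ after noting that for $y\gg 1$ the condition $m\le 1$ forces $\m(m)=b(m+\n)\le b(1+\n)$, so enlarging the constant in $\chi$ makes $\chi_0\equiv 1$ there automatically; its derivatives are $O(1)=O_\a(1)$, which is subsumed by all three bounds in \eqref{POUproper} on the relevant (bounded) overlap. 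For the three main cutoffs, I would use the \emph{single} transition point picture: the ``inner'' region is $\m(m)\lesssim y$, the ``boundary'' region is $|\m(m)-y|\lesssim y^{1/3}$, and the ``outer'' region is $\m(m)\gtrsim y$. Define $\chi_{2,y}(m) := \chi\bigl((\m(m)-y)/y^{1/3}\bigr)$ with support in, say, $|\m(m)-y|\le 2y^{1/3}$, which sits inside $\Omega_2$; since $\pa_m^\a\chi_{2,y}$ produces a factor $(\pa_m\m)^\a y^{-\a/3}=b^\a y^{-\a/3}$ times a bounded function of $(\m(m)-y)/y^{1/3}$, the bound $|\pa_m^\a\chi_{2,y}|\le C_\a y^{-\a/3}$ is immediate. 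Then set $\chi_{1,y}$ to be supported on $\{1\le\m(m)\le y-\tfrac12 y^{1/3}\}\cup(\text{small overlap})$ and $\chi_{3,y}$ on $\{\m(m)\ge y+\tfrac12 y^{1/3}\}$, defined so that $\chi_0+\chi_{1,y}+\chi_{2,y}+\chi_{3,y}\equiv 1$; the cleanest route is to let $\chi_{1,y}$ carry everything to the left of $\chi_{2,y}$ and $\chi_{3,y}$ everything to the right, i.e.\ choose a smooth $\Theta\in C^\infty(\R;[0,1])$ with $\Theta\equiv 1$ on $(-\infty,-1]$, $\Theta\equiv 0$ on $[1,\infty)$, put $\chi_{3,y}(m) := (1-\Theta)\bigl((\m(m)-y)/y^{1/3}\bigr)$ and $\chi_{1,y}(m) := (1-\chi_0(m))\,\Theta\bigl((\m(m)-y)/y^{1/3}\bigr)$, and then $\chi_{2,y} := 1-\chi_0-\chi_{1,y}-\chi_{3,y}$, which one checks equals a function of $(\m(m)-y)/y^{1/3}$ supported in $\Omega_2$ after absorbing the $\chi_0$ term (supported near the origin, far from the transition when $y\gg1$).

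The only subtle point is that the claimed bounds on $\chi_{1,y}$ and $\chi_{3,y}$ are \emph{not} the crude $O(y^{-\a/3})$ but the sharper $\max(m^{-\a},(y-\m(m))^{-\a})$ and $(\m(m)-y)^{-\a}$ respectively, i.e.\ the cutoffs must be genuinely ``variable-bandwidth,'' tightening as one approaches either $m\sim 0$ or the transition $\m(m)\sim y$. The fix is to not use a single fixed scale $y^{1/3}$ for $\chi_{1,y}$ but to build it as a product of two one-sided variable-scale cutoffs: one handling the region near $m\sim 1$ (where the natural scale is $\sim m$, giving $|\pa_m^\a|\lesssim m^{-\a}$, e.g.\ via $\Theta(\log m)$-type or $\Theta(\m(m))$-type functions whose $\a$-th derivative is $O(m^{-\a})$ by the chain rule since $\pa_m\log m = m^{-1}$), and one handling the region near $\m(m)\sim y$ from below (natural scale $\sim y-\m(m)$, handled by $\Theta$ of a function like $-\log(y-\m(m))$ or, more simply, by the standard construction of a cutoff adapted to an interval $[a,b]$ with derivative bounds $\lesssim (b-\m)^{-\a}$ near the right endpoint — this is exactly the kind of ``adapted cutoff'' used in \cite{IJ}). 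The bound on a product is the product of the bounds via the Leibniz rule (Lemma~\ref{Leibnizrule}), and $\min$ of two scales gives $\max$ of the two derivative bounds up to the overlap where both are comparable, yielding $\max(m^{-\a},(y-\m(m))^{-\a})$ as required. The bound for $\chi_{3,y}$ is the one-sided version near the left endpoint $\m(m)\sim y$ of the half-line $[y+\tfrac12 y^{1/3},\infty)$, giving $\lesssim (\m(m)-y)^{-\a}$ directly; no upper-endpoint issue arises since the region is a half-line. Finally I would verify that the enlarged supports still lie inside $\Omega_1,\Omega_2,\Omega_3$ — this is where the specific numerical constants $\tfrac12$ and $2$ in the definitions of $\Omega_j$ were chosen to leave room — and that $\chi_0\equiv 1$ for $m\le 1$ or $\m(m)\le 2$, which holds by construction once $y$ is large enough that the transition zone $|\m(m)-y|\lesssim y^{1/3}$ is disjoint from $\{\m(m)\le 2\}$.

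The main obstacle is bookkeeping the variable-scale cutoffs so that the derivative bounds come out exactly as stated rather than as the weaker uniform $y^{-\a/3}$ bound; concretely, one must check that a cutoff adapted to the (shrinking as $\f\to$ endpoint) interval admits the claimed $(y-\m(m))^{-\a}$ and $m^{-\a}$ control simultaneously, which is a standard but slightly fiddly Leibniz-rule computation. Everything else — the existence of the partition of unity, the support conditions, and the normalization $\sum = 1$ — is routine once the building-block cutoffs are fixed.
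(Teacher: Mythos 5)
Your opening construction---a fixed partition of unity $\chi_1+\chi_2+\chi_3=1$ in the rescaled variable $y^{-1/3}(\mu(m)-y)$, multiplied by $1-\chi_0(m)$---is exactly the paper's proof, and it already yields all of \eqref{POUproper}. The point you flag as the ``only subtle point'' rests on a misreading of the estimates: the bounds $\max(m^{-\alpha},(y-\mu(m))^{-\alpha})$ and $(\mu(m)-y)^{-\alpha}$ are not sharper than the crude $y^{-\alpha/3}$ on the set where the derivatives of $\chi_{1,y}$, $\chi_{3,y}$ are actually nonzero. With the simple construction, $\partial_m^{\alpha}\chi_{1,y}$ is supported only (i) on the fixed compact set where $\chi_0$ transitions, where it is $O(1)$ and $m^{-\alpha}\gtrsim 1$, and (ii) in the band $y-\mu(m)\sim y^{1/3}$, where $y^{-\alpha/3}\sim(y-\mu(m))^{-\alpha}$; similarly $\partial_m^{\alpha}\chi_{3,y}$ lives where $\mu(m)-y\sim y^{1/3}$, so $(\mu(m)-y)^{-\alpha}\sim y^{-\alpha/3}$ there. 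In between, the cutoffs are locally constant and there is nothing to bound. The $m^{-\alpha}$ term in the $\chi_{1,y}$ bound is present precisely to absorb the $O(1)$ derivatives coming from $\chi_0$ near the origin (where a uniform $y^{-\alpha/3}$ would be false), not to demand a cutoff tightening at scale $m$. So the ``variable-bandwidth'' product construction, the $\Theta(\log m)$ devices and the appeal to adapted cutoffs \`a la Ionescu--Jerison are unnecessary; and since you only sketch them, what you actually have on paper is the simple construction.

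Two concrete slips in your formulas would also need fixing. First, with your $\Theta$ transitioning on $[-1,1]$, the support of $\chi_{1,y}=(1-\chi_0)\,\Theta\bigl(y^{-1/3}(\mu(m)-y)\bigr)$ reaches up to $\mu(m)\le y+y^{1/3}$ and that of $\chi_{3,y}=(1-\Theta)\bigl(y^{-1/3}(\mu(m)-y)\bigr)$ reaches down to $\mu(m)\ge y-y^{1/3}$, so neither is contained in $\Omega_1$ resp.\ $\Omega_3$; the transitions must sit inside $[-2,-\tfrac12]$ and $[\tfrac12,2]$ (equivalently, take $\supp\chi_1\subset(-\infty,-\tfrac12]$, $\supp\chi_2\subset[-2,2]$, $\supp\chi_3\subset[\tfrac12,\infty)$, as the paper does). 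Second, your two definitions of $\chi_{2,y}$ are inconsistent: with $\chi_{1,y}=(1-\chi_0)\Theta$ and $\chi_{3,y}=1-\Theta$ (not multiplied by $1-\chi_0$), the remainder $1-\chi_0-\chi_{1,y}-\chi_{3,y}$ equals $-\chi_0(1-\Theta)$, which vanishes identically for $y\gg1$ rather than being a bump on $\Omega_2$. Defining $\chi_{j,y}(m)=(1-\chi_0(m))\chi_j\bigl(y^{-1/3}(\mu(m)-y)\bigr)$ for all three $j$, with the fixed partition above, removes both problems at once, and the stated bounds then follow from the chain rule together with the support observations in the previous paragraph.
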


\begin{proof}
Let $\chi_0\in C_c^{\infty}(\R;[0,1])$ and $\chi_j\in C^{\infty}(\R;[0,1])$ for $j=1,2,3$ such that $\chi_0(m)=1$ for $\m(m)\leq 2$ or $m\leq 1$, \smash{$\sum_{j=1}^3\chi_j=1$} on $\R$, $\supp \chi_1\subset \bigl(-\infty,-\frac{1}{2}\bigr]$, $\supp \chi_2\subset [-2,2]$ and $\supp \chi_3\subset \bigl[\frac{1}{2},\infty\bigr)$. We define
\begin{align*}
\chi_{j,y}(m)=(1-\chi_0(m))\chi_j\bigl(y^{-\frac{1}{3}}(\m(m)-y)\bigr)
\end{align*}
for $j=1,2,3$.
They satisfies the desired properties.
\end{proof}

For $\s=(\s_1,\s_2)\in \{\pm\}\times \{\pm\}$ and $j=2,3$, we set
\begin{align*}
&S_{1,\s}(m,y,\f)=\s_1yh_1\left(\frac{\m(m)}{y}\right) +\left(\s_2\f-\frac{\pi}{2}b\right)m,\qquad S_{\s_2}(m,y,\f)=\left(\s_2\f-\frac{\pi}{2}b\right)m,
\end{align*}
where $h_1$ is defined in \eqref{hdef}. We remark that $S_{1,\sigma}(m,y,\f)$ is defined for $0\le \m(m)\le y$.
We take~${y\gg 1}$ such that $\Omega_2\cup \Omega_3\subset \{\m(m)\geq 8\}$ in order to apply Proposition \ref{Besselasymp}.
We define
\begin{align*}
&\z_{1,\s}(m,y,\f)=L_{b,\n}y^{-b\n-\frac{1}{4}}\chi_{1,y}(m)(m+\n)(y-\m(m))^{-\frac{1}{4}}g_{\n,\s_2}(m,\f)a_{\s_1,y}(\m(m)),\\
&\z_{j,\s_2}(m,y,\f)=L_{b,\n}y^{-b\n}\chi_{j,y}(m)(m+\n)J_{\m(m)}(y)g_{\n,\s_2}(m,\f),\qquad j=2,3,
\end{align*}
where $a_{\pm,y}$ and $g_{\n,\pm}$ are defined in Propositions \ref{Besselasymp} and \ref{Gegenasymp} respectively and $L_{b,\n}$ is a constant defined in~\eqref{cbndef}.

Now it follows from Propositions \ref{Besselasymp}, \ref{Gegenasymp}, the formula \eqref{sumbesell} and Lemma \ref{Cutoff} that
\begin{align}\label{sIdecom}
\sI(b,\nu;- {\rm i}y;\cos\f)=\sum_{\s\in \{\pm\}\times \{\pm\}}I_{1,\s}(y,\f)+\sum_{j=2}^3\sum_{\s_2\in \{\pm\}}I_{j,\s_2}(y,\f)+R(y,\f),
\end{align}
where we set
\begin{gather*}
I_{1,\s}(y,\f):=\sum_{m=1}^{\infty}\z_{1,\s}(m,y,\f){\rm e}^{{\rm i}S_{1,\s}(m,y,\f)},\qquad I_{j,\s_2}(y,\f):=\sum_{m=1}^{\infty}\z_{j,\s_2}(m,y,\f){\rm e}^{{\rm i}S_{\s_2}(m,y,\f)},\\
R(y,\f):=L_{b,\n}y^{-b\n}\sum_{m=0}^{\infty}\chi_0(m){\rm e}^{-\frac{\pi}{2}bm{\rm i}}J_{\m(m)}(y)(m+\nu)\n^{-1}C^{\nu}_m(\cos\f)\\
\phantom{R(y,\f):=}{}+L_{b,\n}y^{-b\n}\sum_{m=0}^{\infty}
(1-\chi_0(m))
{\rm e}^{-\frac{\pi}{2}bm{\rm i}}J_{\m(m)}(y)(m+\nu)r(m,\f)
\end{gather*}
for $j=2,3$.
The remainder term $R(y,\f)$ is easy to handle.

\begin{Proposition}\label{Ryphiprop}
There exists $C>0$ such that
\begin{align*}
|R(y,\f)|\leq Cy^{-b\n-\frac{1}{3}}
\end{align*}
for $y\gg 1$ and $\f\in [0,\pi]$.
\end{Proposition}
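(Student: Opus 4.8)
The plan is to bound the two sums defining $R(y,\f)$ separately, using only crude estimates since no cancellation is needed. Recall $R(y,\f)=L_{b,\n}y^{-b\n}(R_1(y,\f)+R_2(y,\f))$, where $R_1$ is the sum with cutoff $\chi_0$ (supported where $\m(m)\lesssim 1$, equivalently $m\lesssim 1$) and $R_2$ is the sum with $(1-\chi_0(m))r(m,\f)$.

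For $R_1$, the key observation is that $\supp\chi_0$ is contained in a \emph{fixed} compact set in $m$ independent of $y$ (since $\chi_0(m)=1$ only where $\m(m)=b(m+\n)\le 2$ or $m\le 1$, and $\chi_0\in C_c^\infty$). Hence the sum $\sum_m\chi_0(m)(\cdots)$ has only $O(1)$ nonzero terms. For each such term, $|{\rm e}^{-\frac\pi2 bm{\rm i}}|=1$, $(m+\n)=O(1)$, the Gegenbauer factor is bounded by $Cm^{2\n-1}=O(1)$ via \eqref{Gegenunif}, and the Bessel factor is controlled by the uniform bound \eqref{Besselunif}: $|J_{\m(m)}(y)|\le Cy^{-1/3}$ for $y\gg1$ (taking $y$ large enough that $\m(m)\ge 8$ is not needed here since \eqref{Besselunif} already requires only $y\ge 8$, $\m\ge 0$; for the finitely many $m$ with small $\m(m)$ we may also just use that $J_\m(y)$ is bounded for bounded $\m$ and large $y$, which is classical, or absorb into \eqref{Besselunif}). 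Therefore $|R_1(y,\f)|\le Cy^{-1/3}$ uniformly in $\f\in[0,\pi]$.

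For $R_2$, we use the rapid decay $|r(m,\f)|\le C_{N,\n}m^{-N}$ from Proposition \ref{Gegenasymp}, valid for $m\in\mathbb{N}^*$ and all $\f\in[0,\pi]$. Choosing $N$ large (say $N=2$ suffices), we bound
\begin{align*}
|R_2(y,\f)|\le \sum_{m=1}^\infty |J_{\m(m)}(y)|\,(m+\n)\,|r(m,\f)|\le Cy^{-\frac13}\sum_{m=1}^\infty (m+\n)m^{-N}\le Cy^{-\frac13},
\end{align*}
where again $|J_{\m(m)}(y)|\le Cy^{-1/3}$ by \eqref{Besselunif} for $y\gg1$, and the series in $m$ converges for $N\ge 3$. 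Combining, $|R(y,\f)|\le L_{b,\n}y^{-b\n}\cdot Cy^{-1/3}=C'y^{-b\n-\frac13}$, as claimed.

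I do not expect a genuine obstacle here: the statement is explicitly flagged as "easy to handle," and the only mild points are (i) noting that $\supp\chi_0$ is $y$-independent so $R_1$ has boundedly many terms, and (ii) checking convergence of the elementary series $\sum_m (m+\n)m^{-N}$ after invoking the rapid decay of $r$. Both the Bessel uniform bound \eqref{Besselunif} and the Gegenbauer uniform bound \eqref{Gegenunif} (together with the $r$-estimate) are already established, so the proof is a short assembly of these ingredients.
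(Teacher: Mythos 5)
Your proof is correct and follows essentially the same route as the paper: the first sum is handled by the fixed compact support of $\chi_0$ together with the uniform bounds \eqref{Besselunif} and \eqref{Gegenunif}, and the second by the rapid decay of $r(m,\f)$ from Proposition \ref{Gegenasymp} combined with \eqref{Besselunif}. Your write-up merely spells out the details the paper leaves implicit.
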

\begin{proof}
Since $\chi_0$ is compactly supported, then the first term in the definition of $R(y,\f)$ is bounded by a constant times \smash{$y^{-b\n}\cdot y^{-\frac{1}{3}}=y^{-b\n-\frac{1}{3}}$} due to \eqref{Besselunif} and \eqref{Gegenunif}. Moreover, the second term has a similar bound since $r(m,y)$ is rapidly decreasing by Proposition $\ref{Gegenasymp}$.
\end{proof}

In the following, we focus on studying $I_{1,\s}$ and $I_{j,\s_2}$.

\subsection{Properties of the phase functions}

Here we prove some estimates of the phase functions defined in the last subsection.
We recall $h_1(z)=\sqrt{1-z^2}-z\cos^{-1} z$ and
$
h_1'(z)=-\cos^{-1}z$.
For $\s=(\s_1,\s_2)\in \{\pm\}\times \{\pm\}$ and $j=2,3$, we have
\begin{align*}
\pa_mS_{1,\s}(m,y,\f)=-\s_1 b\cos^{-1}\left(\frac{\m(m)}{y} \right)+\s_2\f-\frac{\pi}{2}b,\qquad \pa_mS_{\s_2}(m,y,\f)=\s_2 \f-\frac{\pi}{2}b,
\end{align*}
where $\m(m)$ is defined in \eqref{muemudef}.

\begin{Lemma}\label{phaseest}
Let $0<\d<1$ and $\s\in \{\pm\}\times \{\pm\}$.
\begin{itemize}\itemsep=0pt
\item[$(i)$] For $\a\in\mathbb{N}^*$, there exists $C_{\a}>0$ such that
\begin{align*}
\big|\pa_m^{\a+1}S_{1,\s}(m,y,\f)\big|\leq \begin{cases}C_{\a}y^{-\a}&  \text{for}\  1\leq \m(m)\leq \d y,\\
C_{\a}y^{-\frac{1}{2}}(y-\m(m))^{-\a+\frac{1}{2}}& \text{for}\  \d y\leq \m(m)\leq y-\frac{1}{2}y^{\frac{1}{3}}
\end{cases}
\end{align*}
and for $m\in \Omega_1$, $y\gg 1$ and $\f\in [0,\pi]$.
\item[$(ii)$] For $\a\in\mathbb{N}^*\setminus \{1\}$, we have $\pa_m^{\a}S_{\s_2}(m,y,\f)=0$.
\end{itemize}
\end{Lemma}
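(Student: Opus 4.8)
The statement to prove is Lemma~\ref{phaseest}, concerning derivative bounds for the phase functions $S_{1,\sigma}$ and $S_{\sigma_2}$. Part (ii) is immediate: $S_{\sigma_2}(m,y,\f) = (\sigma_2\f - \tfrac{\pi}{2}b)m$ is linear in $m$, so $\pa_m S_{\sigma_2} = \sigma_2\f - \tfrac{\pi}{2}b$ is constant and all higher derivatives vanish. The real content is part (i).

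For part (i), the plan is to reduce everything to estimating derivatives of the single-variable function $z \mapsto h_1(z) = \sqrt{1-z^2} - z\cos^{-1}z$ composed with the affine map $m \mapsto \m(m)/y = b(m+\n)/y$. Since $h_1'(z) = -\cos^{-1}z$, for $\a \ge 1$ we have $\pa_m^{\a+1}S_{1,\sigma}(m,y,\f) = -\sigma_1 b\, \pa_m^{\a}\bigl[\cos^{-1}(\m(m)/y)\bigr]$ (the remaining terms in $S_{1,\sigma}$, namely $(\sigma_2\f - \tfrac{\pi}{2}b)m$, are linear and drop out for $\a+1 \ge 2$). By the chain rule, $\pa_m^{\a}\bigl[\cos^{-1}(\m(m)/y)\bigr]$ is a sum of terms of the form $(b/y)^{\a} \cdot (\cos^{-1})^{(j)}(z)$ with $j \le \a$, evaluated at $z = \m(m)/y \in [0,1)$. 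So the task becomes: bound the derivatives $(\cos^{-1})^{(j)}(z)$ for $j \ge 1$ and $z$ bounded away from $1$ by the two stated amounts, according to whether $z \le \d$ or $z \ge \d$.

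For this I would differentiate $\tfrac{\d}{\d z}\cos^{-1}z = -(1-z^2)^{-1/2}$ repeatedly: by induction, $(\cos^{-1})^{(j)}(z)$ is $(1-z^2)^{-(2j-1)/2}$ times a polynomial in $z$, so $|(\cos^{-1})^{(j)}(z)| \lesssim (1-z^2)^{-(2j-1)/2} \sim (1-z)^{-(2j-1)/2}$ for $z \in [0,1)$ (using $1-z^2 = (1-z)(1+z) \sim 1-z$). On the region $\m(m) \le \d y$, i.e. $z \le \d < 1$, we have $(1-z)^{-(2j-1)/2} \lesssim_\d 1$, so $|\pa_m^{\a+1}S_{1,\sigma}| \lesssim (b/y)^{\a}\cdot \max_j|(\cos^{-1})^{(j)}| \lesssim_{\a,\d} y^{-\a}$, giving the first case. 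On the region $\d y \le \m(m) \le y - \tfrac12 y^{1/3}$, write $1 - z = (y - \m(m))/y$; the dominant term comes from $j = \a$, giving $(b/y)^{\a}(1-z)^{-(2\a-1)/2} = (b/y)^{\a}\bigl((y-\m(m))/y\bigr)^{-(2\a-1)/2} = b^{\a} y^{-\a + (2\a-1)/2}(y-\m(m))^{-(2\a-1)/2} = b^{\a} y^{-1/2}(y-\m(m))^{-\a+1/2}$, matching the second case; the lower-order terms ($j < \a$) contribute smaller powers of $(y-\m(m))^{-1}$ and are absorbed since $y - \m(m) \ge \tfrac12 y^{1/3} \gtrsim 1$ when $y \gg 1$. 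I would present the chain-rule bookkeeping via Lemma~\ref{Leibnizrule} (the Faà di Bruno / Leibniz-type estimate the paper invokes elsewhere) rather than writing out the combinatorics explicitly.

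The main obstacle — though a mild one — is the chain-rule bookkeeping: checking that across the full range $j = 1, \dots, \a$ the worst term is indeed the top one and that the subleading terms are genuinely absorbed by the leading bound on the stated region (which crucially uses $y - \m(m) \gtrsim y^{1/3} \to \infty$, not merely $y - \m(m) > 0$). One should be slightly careful that the constants depend only on $\a$ (and $\d$), not on $y$, $m$, or $\f$; this is clear since $b, \n$ are fixed and the dependence on $\f$ is none at all once the linear term drops. No stationary-phase machinery is needed here — this is a purely elementary computation, and I would keep the write-up correspondingly short.
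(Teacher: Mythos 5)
Your proposal is correct and follows essentially the same route as the paper: both arguments boil down to the fact that the $m$-derivatives of $S_{1,\s}$ beyond the first are, up to constants, derivatives of $z\mapsto\cos^{-1}z$ (equivalently of $\bigl(y^2-\m(m)^2\bigr)^{-1/2}$) evaluated at $z=\m(m)/y$, which blow up like powers of $(1-z)^{-1/2}$ near $z=1$, and then one splits according to $\m(m)\le\d y$ or not exactly as you do. The only cosmetic remark is that, since $m\mapsto\m(m)/y$ is affine, the chain rule produces a single term $(b/y)^{\a}(\cos^{-1})^{(\a)}(z)$ rather than a sum over $j\le\a$, so the ``bookkeeping'' you flag as the main obstacle is in fact trivial.
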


\begin{proof}
Since \smash{$\pa_z\cos^{-1}z=-(1-z^2)^{-\frac{1}{2}}$}, we have \smash{$\pa_{m}^2S_{1,\s}(m,y,\f)=\s_1b^2\bigl(y^2-\m(m)^2\bigr)^{-\frac{1}{2}}$}.
We have
\begin{align*}
\big|\pa_{m}^{\a+1}S_{1,\s}(m,y,\f)\big|={}&\big|\pa_{m}^{\a-1}\pa_{m}^2S_{1,\s}(m,y,\f)\big|\lesssim y^{\a-1}\bigl(y^2-\m(m)^2\bigr)^{-\a+\frac{1}{2}}\\
\le{}&y^{-\frac{1}{2}}(y-\m(m))^{-\a+\frac{1}{2}}.
\end{align*}
If $1\leq \m(m)\leq\d y$, then we have $(y-\m(m))^{-1}\lesssim y^{-1}$. Combining these estimates, we obtain the part (i).
The part (ii) is easy to prove.
\end{proof}

\subsection{Estimates of the amplitudes}

In this subsection, we deduce estimates for $\z_{1,\s}$ and $\z_{j,\s_2}$ which are defined in Section \ref{subsecdecom}. Recall from \eqref{muemudef} that $\m(m)=b(m+\n)$.

\begin{Lemma}\label{ampesti}

Let $\s=(\s_1,\s_2)\in \{\pm\}\times\{\pm\}$, $N>0$, $0<\e<\pi/2$ and $\a\in\mathbb{N}$. Then
\begin{itemize}\itemsep=0pt
\item[$(i)$] $\supp \z_{1,\s}(\cdot,y,\f)\subset \Omega_1$ and $\supp \z_{j,\s_2}(\cdot,y,\f)\subset \Omega_j$ for $j=2,3$.
\item[$(ii)$] Let $0<\d<1$. We have
\begin{align*}
|\pa_{m}^{\a}\z_{1,\s}(m,y,\f)|\lesssim \begin{cases}y^{-b\n-\frac{1}{2}}(1+m)^{2\n-\a}(1+m\sin \f)^{-\n} \qquad \text{when}\  \m(m)\leq \d y, \\
y^{(2-b)\n-\frac{1}{4}} (y-\m(m))^{-\frac{1}{4}-\a}(1+m\sin \f)^{-\n}\\
\qquad\text{when}\  \d y\leq \m(m)\leq y-\frac{1}{2}y^{\frac{1}{3}}
\end{cases}
\end{align*}
for $m\geq 1$, $y\gg 1$ and $\f\in [0,\pi]$. In particular, for fixed $\e>0$, we have
\begin{align*}
|\pa_{m}^{\a}\z_{1,\s}(m,y,\f)|\lesssim \begin{cases}y^{-b\n-\frac{1}{2}}(1+m)^{\n-\a}&  \text{when}\  \m(m)\leq \d y,\\
y^{(1-b)\n-\frac{1}{4}} (y-\m(m))^{-\frac{1}{4}-\a}& \text{when}\  \d y\leq \m(m)\leq y-\frac{1}{2}y^{\frac{1}{3}}
\end{cases}
\end{align*}
for $m\geq 1$, $y\gg 1$ and $\f\in [\e,\pi-\e]$.
\item[$(iii)$] We have
\begin{align*}
|\z_{2,\s_2}(m,y,\f)|\lesssim \begin{cases}y^{(1-b)\n-\frac{1}{3}}& \text{for}\  \f\in [\e,\pi-\e],\\
y^{(2-b)\n-\frac{1}{3}}& \text{for}\  \f\in [0,\pi]
\end{cases}
\end{align*}
and for $m\geq 1$, $y\gg 1$. Moreover,
\begin{align*}
|\pa_{m}^{\a}\z_{2,\s_2}(m,y,\f)|\lesssim y^{(2-b)\n-\frac{1+\a}{3}}
\end{align*}
for $m\geq 1$, $y\gg 1$ and $\f\in [0,\pi]$.
\item[$(iv)$]
We have
\begin{align*}
|\z_{3,\s_2}(m,y,\f)|\lesssim \begin{cases}y^{(2-b)\n+\frac{1}{12}}(\m(m)-y)^{-\frac{5}{4}}\\
\qquad \text{for}\  y+\frac{1}{2}y^{\frac{1}{3}}\leq \m(m)\leq 4y,\  \f\in [0,\pi],\\
y^{(1-b)\n+\frac{1}{12}}(\m(m)-y)^{-\frac{5}{4}}\\
\qquad \text{for}\  y+\frac{1}{2}y^{\frac{1}{3}}\leq \m(m)\leq 4y,\  \f\in [\e,\pi-\e],\\
(1+m)^{-N}\qquad \text{for}\  \m(m)\geq 2y,\  \f\in [0,\pi]
\end{cases}
\end{align*}
and for $m\geq 1$, $y\gg 1$. Moreover,
\begin{align*}
|\pa_{m}^{\a}\z_{3,\s_2}(m,y,\f)|\lesssim y^{(2-b)\n-\frac{1}{4}-\frac{\a}{3}}
\end{align*}
for $m\geq 1$ $y\gg 1$ and $\f\in [0,\pi]$.
\end{itemize}
\end{Lemma}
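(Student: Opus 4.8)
Part $(i)$ is immediate, since $\z_{1,\s}$ contains $\chi_{1,y}$ and $\z_{j,\s_2}$ contains $\chi_{j,y}$ as a factor, so their supports lie in $\supp\chi_{1,y}\subset\Omega_1$, resp.\ $\supp\chi_{j,y}\subset\Omega_j$, by Lemma~\ref{Cutoff}. For $(ii)$--$(iv)$ the plan is uniform: differentiate the product defining each amplitude by the Leibniz rule (Lemma~\ref{Leibnizrule}) and bound each factor separately --- $\chi_{j,y}$ by Lemma~\ref{Cutoff}, $a_{\pm,y}$ and $J$ by Proposition~\ref{Besselasymp}, $g_{\n,\s_2}$ by Proposition~\ref{Gegenasymp} --- and then combine on each support. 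Since $\m(m)=b(m+\n)$ is affine, the $\m$-derivative bounds of Proposition~\ref{Besselasymp} pass to $m$-derivative bounds up to powers of $b$, the factor $(m+\n)$ contributes $(1+m)^{1-\a}$ (and vanishes after one more derivative), and $(y-\m(m))^{-1/4}$ contributes $(y-\m(m))^{-1/4-\a}$. The recurring elementary inputs are: on $\Omega_1\cap\{\m(m)\le\d y\}$ one has $1\le m\lesssim y$ and $y-\m(m)\sim y$; on $\Omega_1\cap\{\m(m)\ge\d y\}$, on $\Omega_2$, and on $\Omega_3\cap\{\m(m)\le4y\}$ one has $m\sim y$; on $\Omega_3\cap\{\m(m)\ge2y\}$ one has $\m(m)-y\sim\m(m)\sim m$; and $|y-\m(m)|\ge\tfrac12y^{1/3}$ on $\Omega_1\cup\Omega_3$. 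We also freely convert $y$-powers into $(y-\m(m))$-powers via $y^{-1}\le(y-\m(m))^{-1}$, and into $m$-powers via $y^{-1}\lesssim m^{-1}$ on $\{m\lesssim y\}$.

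For $(ii)$: on $\Omega_1\cap\{\m(m)\le\d y\}$ the factor $(y-\m(m))^{-1/4}$ is $\sim y^{-1/4}$, which together with the prefactor $L_{b,\n}y^{-b\n-1/4}$ gives $y^{-b\n-1/2}$; every surviving $(y-\m(m))^{-1}$ and $\max(m^{-1},(y-\m(m))^{-1})$ is replaced by $m^{-1}$, and the polynomial-in-$m$ factors from $\chi_{1,y}$, $(m+\n)$, $g_{\n,\s_2}$ and $a_{\s_1,y}(\m(m))$ collect to $(1+m)^{2\n-\a}(1+m\sin\f)^{-\n}$, surplus $y^{-1}$-powers being harmless. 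On $\Omega_1\cap\{\m(m)\ge\d y\}$ one uses $m\sim y$ to turn polynomial-in-$m$ factors into $y$-powers and then into $(y-\m(m))$-powers, so that $\chi_{1,y}$, $(y-\m(m))^{-1/4}$ and $a_{\s_1,y}(\m(m))$ combine to $(y-\m(m))^{-1/4-\a}$, giving $y^{(2-b)\n-1/4}(y-\m(m))^{-1/4-\a}(1+m\sin\f)^{-\n}$. The $\f\in[\e,\pi-\e]$ refinements follow by substituting $|\pa_m^{\a}g_{\n,\pm}(m,\f)|\lesssim m^{\n-1-\a}$ (Remark~(1) after Proposition~\ref{Gegenasymp}), which lowers $2\n$ to $\n$ in the $m$-exponent (equivalently $(2-b)\n$ to $(1-b)\n$) and drops the $(1+m\sin\f)^{-\n}$ factor.

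For $(iii)$ and $(iv)$: on $\Omega_2$, where $m\sim y$ and $|\pa_m^{\a}J_{\m(m)}(y)|\lesssim y^{-(1+\a)/3}$ by Proposition~\ref{Besselasymp}$(ii)$, distributing $\a=\a_1+\dots+\a_4$ derivatives among $\chi_{2,y}$, $(m+\n)$, $J$, $g_{\n,\s_2}$ gives $|\pa_m^{\a}\z_{2,\s_2}|\lesssim y^{-b\n}y^{-\a_1/3}y^{1-\a_2}y^{-(1+\a_3)/3}y^{2\n-1-\a_4}\lesssim y^{(2-b)\n-(1+\a)/3}$ (using $\a_2\ge\a_2/3$, $\a_4\ge\a_4/3$), and the bound for $|\z_{2,\s_2}|$ and its $\f\in[\e,\pi-\e]$ improvement follow identically. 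On $\Omega_3$ one uses $|\pa_m^{\a}J_{\m(m)}(y)|\lesssim\m(m)^{-1/4}(\m(m)-y)^{-1/4-\a}\bigl(y^{-1}(\m(m)-y)^3\bigr)^{-N}$ for any $N>0$: for $\m(m)\le4y$, a fixed $N>0$ makes the rapid-decay factor $\lesssim1$, so the derivative bound $y^{(2-b)\n-1/4-\a/3}$ follows after using $(\m(m)-y)^{-1}\lesssim y^{-1/3}$ and $\a_i\ge\a_i/3$; for $|\z_{3,\s_2}|$ itself the choice $N=\tfrac13$ turns that factor into $y^{1/3}(\m(m)-y)^{-1}$, which together with $\m(m)^{-1/4}\sim y^{-1/4}$ and the explicit $(\m(m)-y)^{-1/4}$ yields $y^{1/12}(\m(m)-y)^{-5/4}$ for $J$, whence $|\z_{3,\s_2}|\lesssim y^{(2-b)\n+1/12}(\m(m)-y)^{-5/4}$ after collecting $y^{-b\n}\cdot(m+\n)\cdot g_{\n,\s_2}\sim y^{-b\n}y\,y^{2\n-1}$ (and the $\f\in[\e,\pi-\e]$ version by the sharper Gegenbauer bound); for $\m(m)\ge2y$ one has $y^{-1}(\m(m)-y)^3\gtrsim m^2$, so taking $N$ large makes $\z_{3,\s_2}$ and all its derivatives $\lesssim(1+m)^{-N'}$, which is $\lesssim y^{(2-b)\n-1/4-\a/3}$ since $m\gtrsim y$.

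The only delicate point --- and the main obstacle --- is the bookkeeping for $\z_{3,\s_2}$ near the transition $\m(m)\approx y$: one must simultaneously balance the singular factor $(\m(m)-y)^{-1/4-\a}$ from Proposition~\ref{Besselasymp}$(ii)$ against the growth $\m(m)^{2\n}$ from the Gegenbauer polynomial and against the freely chosen power $N$ of $\bigl(y^{-1}(\m(m)-y)^3\bigr)^{-N}$ (taking $N$ small near $\m(m)=y$, large for $\m(m)$ big), and keep all estimates uniform in $\f\in[0,\pi]$ and valid down to $\m(m)=y+\tfrac12y^{1/3}$ --- it is precisely this cutoff that forces the $y^{1/12}$ loss in $(iv)$. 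Everything else reduces to keeping careful track of which of $y$, $y-\m(m)$, $\m(m)-y$ should absorb each derivative.
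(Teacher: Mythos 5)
Your proposal is correct and follows essentially the same route as the paper: combine the factor bounds from Lemma \ref{Cutoff}, Proposition \ref{Besselasymp} and Proposition \ref{Gegenasymp} via the Leibniz rule, then use $m\sim y$, $y-\m(m)\sim y$, $(y-\m(m))^{-1}\lesssim m^{-1}$ (or vice versa) and the inequalities $y^{-1}(\m(m)-y)^3\geq 1/8$, $\m(m)-y\geq \tfrac12 y^{1/3}$ on each region, with the choice of the free exponent $N$ in the Bessel bound (e.g., $N=\tfrac13$ for the $y^{1/12}$ loss in (iv)) exactly as the paper's terse proof intends. You simply supply the bookkeeping that the paper leaves implicit; no gap.
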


\begin{proof}
 $(i)$ This follows from the definition of $\z_{1,\s}$, $\z_{j,\s_2}$ and the support properties of $\chi_{j,y}$ \eqref{POUproper}.

$(ii)$ It turns out from Lemmas \ref{Besselasymp}, \ref{Gegenasymp} and \ref{Cutoff} that
\begin{align*}
|\pa_{m}^{\a}\z_{1,\s}(m,y,\f)|\lesssim {}&y^{-b\n-\frac{1}{4}}(1+m)^{2\n}(y-\m(m))^{-\frac{1}{4}}(1+m\sin \f)^{-\n}\\
&\times \max((y-\m(m))^{-\a}, m^{-\a}).
\end{align*}
Now the claim in (ii) follows from the support property of $\z_{1,\s}$.

 $(iii)$ We note that \smash{$\Omega_2=\big\{m\in\R\mid y-2y^{\frac{1}{3}}\leq \m(m)\leq y+2y^{\frac{1}{3}}\big\}$}. This part directly follows from Lemmas \ref{Besselasymp}, \ref{Gegenasymp} and \ref{Cutoff}.

 $(iv)$ We recall $\m(m)=b(m+\n)$ and that $\z_{3,\s_2}(m,y,\f)$ is supported in $\Omega_3=\big\{m\in\R\mid \m(m)\geq y+\smash{\frac{1}{2}y^{\frac{1}{3}}}\big\}$.
By Lemmas \ref{Besselasymp}, \ref{Gegenasymp} and \ref{Cutoff}, for each $N>0$ and $\a\in\mathbb{N}$, we have
\begin{align*}
|\pa_{m}^{\a}\z_{3,\s_2}(m,y,\f)|\lesssim y^{-b\n}(1+m)^{2\n-\frac{1}{4}}(1+m\sin \f)^{-\n}(\m(m)-y)^{-\frac{1}{4}-\a}\bigl(y^{-1}(\m(m)-y)^3\bigr)^{-N}
\end{align*}
for $m\geq 1$, $y\gg 1$ and $\f\in [0,\pi]$. We note that $y^{-1}(\m(m)-y)^3\geq 1/8$ and \smash{$\m(m)-y\geq \frac{1}{2}y^{\frac{1}{3}}$} for~${m\in \Omega_3}$. All the estimates we desire are proved by these inequalities.
\end{proof}

\section[Estimates of I\_\{2,s\_2\} and I\_\{3,s\_2\}]{Estimates of $\boldsymbol{I_{2,\s_2}}$ and $\boldsymbol{I_{3,\s_2}}$ }

In this section, we prove bounds for $I_{2,\s_2}$ and $I_{3,\s_2}$ appearing in \eqref{sIdecom}. They are easier to handle than $I_{1,\s}$. In this section, we assume $b>0$, $\n\geq 0$ and let $\s_2\in \{\pm\}$.

\subsection{Intermediate region}

\begin{Proposition}\label{I_2prop}
We have
\begin{align*}
|I_{2,\s_2}(y,\f)|\lesssim \begin{cases}
y^{(1-b)\n}& \text{if}\  b\notin 2\mathbb{Z}, \\
y^{(2-b)\n}& \text{if}\  b\in 2\mathbb{Z}
\end{cases}
\end{align*}
for $y\gg 1$ and $\f\in [0,\pi]$.
\end{Proposition}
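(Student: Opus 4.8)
The plan is to estimate $I_{2,\s_2}(y,\f)=\sum_{m=1}^{\infty}\z_{2,\s_2}(m,y,\f)\,{\rm e}^{{\rm i}S_{\s_2}(m,y,\f)}$ by distinguishing according to how close $\s_2\f-\tfrac{\pi}{2}b$ is to $2\pi\mathbb{Z}$, and exploiting that the amplitude $\z_{2,\s_2}$ is supported in $\Omega_2$, a window of width $\sim y^{1/3}$ in $m$, on which by Lemma~\ref{ampesti}(iii) we have $|\z_{2,\s_2}(m,y,\f)|\lesssim y^{(2-b)\n-\frac13}$ for $\f\in[0,\pi]$ (and $|\z_{2,\s_2}(m,y,\f)|\lesssim y^{(1-b)\n-\frac13}$ for $\f$ away from $0,\pi$) together with the symbol bound $|\pa_m^{\a}\z_{2,\s_2}(m,y,\f)|\lesssim y^{(2-b)\n-(1+\a)/3}$. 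Since $S_{\s_2}(m,y,\f)=(\s_2\f-\tfrac{\pi}{2}b)m$ is \emph{linear} in $m$, the oscillatory sum is essentially a geometric-type sum and the whole analysis is elementary.

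First I would handle the trivial bound by brute force: the support condition $\supp\z_{2,\s_2}(\cdot,y,\f)\subset\Omega_2$ from Lemma~\ref{ampesti}(i), the fact that $\#(\Omega_2\cap\mathbb{N})\lesssim y^{1/3}$, and the amplitude bound $|\z_{2,\s_2}|\lesssim y^{(2-b)\n-1/3}$ immediately give $|I_{2,\s_2}(y,\f)|\lesssim y^{1/3}\cdot y^{(2-b)\n-1/3}=y^{(2-b)\n}$, which settles the case $b\in 2\mathbb{Z}$ and also serves as a fallback when $\f$ is small. For $b\notin 2\mathbb{Z}$ we need to gain a factor $y^{-\n}$, which must come from cancellation in the sum. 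Write $\th:=\s_2\f-\tfrac{\pi}{2}b$ and let $r:=\operatorname{dist}(\th,2\pi\mathbb{Z})$. I would split into two regimes according to whether $r\gtrsim y^{-1/3}$ or $r\lesssim y^{-1/3}$.

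In the regime $r\gtrsim y^{-1/3}$ I would apply the discrete non-stationary phase estimate, Proposition~\ref{discsum1}, with the scaling parameters $M\sim y^{1/3}$, $\rho=1$, $k=(2-b)\n-\tfrac13$ (rescaled appropriately so $\supp\z_{2,\s_2}$ has length $\sim M$ and $|\pa_m^\a\z_{2,\s_2}|\lesssim M^{k-\a}$ after replacing $M$ by $M^{1/3}$ in the exponents — concretely, one works on the variable $m=y^{1/3}m'$): since $\pa_m S_{\s_2}\equiv\th$ and $\pa_m^{\a+1}S_{\s_2}=0$ for $\a\ge1$, the hypotheses on $S$ hold with this $r$, and the conclusion gives $|I_{2,\s_2}|\lesssim M^{k+1}(1+rM)^{-N}$ for every $N$; choosing $N$ large and using $rM\gtrsim y^{-1/3}\cdot y^{1/3}\gtrsim$ — this is where I must be careful, since $rM$ need not be large. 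Actually the cleaner route is to note that with $S_{\s_2}$ linear the sum over the $\sim y^{1/3}$ relevant integers is literally $\sum e^{{\rm i}\th m}(\text{slowly varying } \z_{2,\s_2})$, which by summation by parts (Abel summation) is bounded by $\|\z_{2,\s_2}\|_\infty\cdot(1+\text{length}\cdot\|\pa_m\z_{2,\s_2}\|_\infty/\|\z_{2,\s_2}\|_\infty)\cdot\min(y^{1/3},\, r^{-1})$. Plugging in $\|\z_{2,\s_2}\|_\infty\lesssim y^{(2-b)\n-1/3}$, length $\sim y^{1/3}$, and $\|\pa_m\z_{2,\s_2}\|_\infty\lesssim y^{(2-b)\n-2/3}$ (so the bracket is $O(1)$), we get $|I_{2,\s_2}|\lesssim y^{(2-b)\n-1/3}\cdot\min(y^{1/3},r^{-1})$. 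When $r\gtrsim y^{-1/3}$ this is $\lesssim y^{(2-b)\n-1/3}\cdot r^{-1}\lesssim y^{(2-b)\n}$, which is not yet the claimed $y^{(1-b)\n}$ — so in this regime I additionally invoke the better amplitude bound. The point is that $r\gtrsim y^{-1/3}$ together with $\th=\s_2\f-\tfrac\pi2 b$ and $b\notin 2\mathbb{Z}$ forces $\f$ to stay a fixed distance $\gtrsim y^{-1/3}$ away from the finitely many bad angles; crucially, since $b/2\notin\mathbb{Z}$, the value $\f=0$ itself already gives $r=\operatorname{dist}(-\tfrac\pi2 b,2\pi\mathbb{Z})>0$ a fixed positive constant, and similarly $\f=\pi$. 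Hence on the regime $r\lesssim y^{-1/3}$ the angle $\f$ is bounded away from $0$ and $\pi$ by a fixed constant (depending only on $b$), so Lemma~\ref{ampesti}(iii) gives the \emph{improved} bound $|\z_{2,\s_2}|\lesssim y^{(1-b)\n-1/3}$ there, and the trivial estimate $|I_{2,\s_2}|\lesssim y^{1/3}\cdot y^{(1-b)\n-1/3}=y^{(1-b)\n}$ finishes that regime. Conversely, on the regime $r\gtrsim y^{-1/3}$ we combine: if moreover $\f\in[\e,\pi-\e]$ use $|\z_{2,\s_2}|\lesssim y^{(1-b)\n-1/3}$ with the trivial count to get $y^{(1-b)\n}$; and if $\f$ is within $\e$ of $0$ or $\pi$ but still $r\gtrsim y^{-1/3}$ — here I use that near $\f=0$, $r$ is already $\gtrsim 1$ (a fixed constant, by $b\notin 2\mathbb{Z}$, possibly shrinking $\e$), so the summation-by-parts bound gives $|I_{2,\s_2}|\lesssim y^{(2-b)\n-1/3}\cdot r^{-1}\lesssim y^{(2-b)\n-1/3}$, which is even better than needed.

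The main obstacle is the bookkeeping in the transition zone — precisely, confirming that for $b\notin 2\mathbb{Z}$ the only way $\operatorname{dist}(\s_2\f-\tfrac\pi2 b,\,2\pi\mathbb{Z})$ can be as small as $y^{-1/3}$ is for $\f$ to lie in the interior $[\e,\pi-\e]$ (for a fixed $\e=\e(b)>0$), so that one always has access either to the improved amplitude bound $y^{(1-b)\n-1/3}$ or to a lower bound $r\gtrsim 1$; once this dichotomy is pinned down, each case is a one-line estimate combining the support width $\sim y^{1/3}$, the $L^\infty$ and first-derivative bounds on $\z_{2,\s_2}$ from Lemma~\ref{ampesti}(iii), and either naive counting or summation by parts. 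No stationary phase is needed since $S_{\s_2}$ is linear.
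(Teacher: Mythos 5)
Your overall structure matches the paper's: the trivial count over $\Omega_2$ (width $\sim y^{1/3}$) with the $L^\infty$ bound of Lemma \ref{ampesti}\,(iii) gives $y^{(2-b)\nu}$ for $b\in 2\mathbb{Z}$, the improved bound $y^{(1-b)\nu-\frac13}$ handles $\varphi\in[\varepsilon,\pi-\varepsilon]$, and your observation that $b\notin 2\mathbb{Z}$ forces $\mathrm{dist}\bigl(\sigma_2\varphi-\tfrac{\pi}{2}b,2\pi\mathbb{Z}\bigr)\gtrsim 1$ near $\varphi=0,\pi$ is exactly the mechanism the paper uses. But there is a genuine gap in how you close the near-boundary case: a \emph{single} summation by parts gains only one factor $\|\partial_m\zeta_{2,\sigma_2}\|_\infty/\|\zeta_{2,\sigma_2}\|_\infty\cdot r^{-1}\sim y^{-1/3}$, so your bound in the regime $\varphi\in[0,\varepsilon]\cup[\pi-\varepsilon,\pi]$, $r\gtrsim 1$, is $y^{(2-b)\nu-\frac13}$. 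Your claim that this is ``even better than needed'' is false whenever $\nu>\tfrac13$: one needs $(2-b)\nu-\tfrac13\le(1-b)\nu$, i.e., $\nu\le\tfrac13$, and the proposition is asserted for all $\nu\ge 0$ (and in the application $\nu=\tfrac{a(\sigma_{k,a}-1)}{2}$ can be large). So as written the proof does not yield the stated estimate near $\varphi=0,\pi$ when $b\notin 2\mathbb{Z}$ and $\nu>\tfrac13$.

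The fix is to iterate: since $S_{\sigma_2}$ is linear with $\mathrm{dist}(\partial_mS_{\sigma_2},2\pi\mathbb{Z})\geq c_\varepsilon$ there, and Lemma \ref{ampesti}\,(iii) gives the full symbol estimates $|\partial_m^{\alpha}\zeta_{2,\sigma_2}|\lesssim y^{(2-b)\nu-\frac{1+\alpha}{3}}$ for every $\alpha$ (not just $\alpha\le 1$), each further summation by parts gains another $y^{-1/3}$, and after $O(\nu)$ steps one beats $y^{(1-b)\nu}$ (indeed one gets $O(y^{-N})$ for any $N$). This is precisely what the paper does by invoking Proposition \ref{discsum1} with $k=(2-b)\nu-\tfrac13$, $r=c_\varepsilon$, $M\sim y$ and $\rho=\tfrac13$; note also that your aborted attempt to apply that proposition with $M\sim y^{1/3}$, $\rho=1$ was the wrong scaling — with $M\sim y$, $\rho=\tfrac13$ one has $rM^{\rho}\sim y^{1/3}\to\infty$, which is what produces the rapid decay. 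With that repair (iterated Abel summation or the cited proposition) in the near-boundary regime, the rest of your case analysis is sound and coincides with the paper's argument.
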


\begin{proof}
By Lemma \ref{ampesti}\,(i) and (iii), we have \smash{$|\z_{2,\s_2}(m,y,\f)|\lesssim y^{(2-b)\n-\frac{1}{3}}$} and
\begin{align*}
|I_{2,\s_2}(y,\f)|\lesssim y^{(2-b)\n-\frac{1}{3}}\sum_{m\in \Omega_2\cap\mathbb{N}}1\lesssim y^{(2-b)\n},
\end{align*}
where we recall \smash{$\Omega_2=\big\{m\in\R\mid y-2y^{\frac{1}{3}}\leq \m(m)\leq y+2y^{\frac{1}{3}}\big\}$} and use the number of element of~${\Omega_2\cap \mathbb{N}}$ is bounded by \smash{$y^{\frac{1}{3}}$} times a constant.

We next consider the case $b\notin 2\mathbb{Z}$.
A similar argument implies $|I_{2,\s_2}(y,\f)|\lesssim y^{(1-b)\n}$ uniformly in $\f\in [\e,\pi-\e]$ (with a constant $0<\e<\pi/2$), where we use the first estimate in Lemma \ref{ampesti}\,(iii). Thus, it remains to prove the bound of $I_{2,\s_2}(y,\f)$ for $\f\in [0,\e]\cup [\pi-\e,\pi]$ with $\e>0$ small enough when $b\notin 2\mathbb{Z}$.
In this case, we have
$
-\frac{\pi}{2}b$, $ \s_2\pi-\frac{\pi}{2}b\notin 2\pi \mathbb{Z}$.
Then we find $\e>0$ small enough such that there is $c_{\e}>0$ satisfying $\mathrm{dist}\bigl(\s_2\f-\frac{\pi}{2}b,2\pi\mathbb{Z}\bigr)\geq c_{\e}$ for $\f\in [0,\e)\cup (\pi-\e,\pi]$. This implies $\mathrm{dist}(\pa_mS_{\s_2}(m,y,\f), 2\pi\mathbb{Z})\geq c_{\e}$ for $\f\in [0,\e)\cup (\pi-\e,\pi]$. Moreover, it follows from Lemma \ref{ampesti}\,(iii) that \smash{$|\pa_{m}^{\a}\z_{2,\s_2}(m,y,\f)|\lesssim y^{(2-b)\n-\frac{1}{3}-\frac{\a}{3}}$}. Clearly, we have $\pa_m^{\a+1}S_{\s_2}(m,y,\f)=0$ for $\a\geq 1$.
Applying Proposition \ref{discsum1} with $k=(2-b)\n-\frac{1}{3}$, $r=c_{\e}$, $M \sim y$ and $\rho=\frac{1}{3}$, for each~${N>0}$, we obtain
$
|I_{2,\s_2}(y,\f)|\lesssim y^{-N}$ for $\f\in [0,\e)\cup (\pi-\e,\pi]$.
This completes the proof.
\end{proof}

\subsection{Decaying region}

\begin{Proposition}\label{I_3prop}
We have
\begin{align*}
|I_{3,\s_2}(y,\f)|\lesssim \begin{cases}
y^{(1-b)\n}& \text{if}\  b\notin 2\mathbb{Z}, \\
y^{(2-b)\n}&\text{if}\  b\in 2\mathbb{Z}
\end{cases}
\end{align*}
for $y\gg 1$ and $\f\in [0,\pi]$.
\end{Proposition}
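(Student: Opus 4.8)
The plan is to bound $I_{3,\s_2}(y,\f)=\sum_{m=1}^{\infty}\z_{3,\s_2}(m,y,\f){\rm e}^{{\rm i}S_{\s_2}(m,y,\f)}$ by splitting the range $\Omega_3$ into a ``near'' piece $y+\frac{1}{2}y^{\frac13}\le \m(m)\le 4y$ and a ``far'' piece $\m(m)\ge 2y$ (with a smooth partition of unity; the overlap $2y\le \m(m)\le 4y$ is harmless). On the far piece, Lemma~\ref{ampesti}\,(iv) gives $|\z_{3,\s_2}(m,y,\f)|\lesssim (1+m)^{-N}$ for any $N$, so $\sum_{\m(m)\ge 2y}|\z_{3,\s_2}(m,y,\f)|\lesssim y^{-N'}$ for any $N'$, which is negligible compared with the asserted $y^{(1-b)\n}$ or $y^{(2-b)\n}$. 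So the whole content is the near piece.

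For the near piece, first I would try the brute-force triangle inequality: by Lemma~\ref{ampesti}\,(iv), $|\z_{3,\s_2}(m,y,\f)|\lesssim y^{(2-b)\n+\frac{1}{12}}(\m(m)-y)^{-\frac54}$ on $[0,\pi]$, and the sum over $m$ with $\m(m)=b(m+\n)$ ranging in $[y+\frac12 y^{\frac13},4y]$ of $(\m(m)-y)^{-\frac54}$ compares (via Lemma~\ref{monotonesum}, the function $s\mapsto (s-y)^{-\frac54}$ being monotone decreasing on $s>y$) to $\int_{y+\frac14 y^{1/3}}^{\infty}(s-y)^{-\frac54}\,{\rm d}s\lesssim \bigl(y^{1/3}\bigr)^{-1/4}=y^{-1/12}$. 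Hence $|I_{3,\s_2}(y,\f)|\lesssim y^{(2-b)\n+\frac1{12}}\cdot y^{-\frac1{12}}=y^{(2-b)\n}$ uniformly in $\f\in[0,\pi]$, which already settles the case $b\in 2\mathbb{Z}$ (and is the only estimate needed there, since then ${\rm e}^{{\rm i}S_{\s_2}}=(\pm1)^m$ oscillates at worst mildly and there is no gain to hope for).

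For $b\notin 2\mathbb{Z}$ we must gain the extra factor $y^{-\n}$, i.e.\ pass from the $[0,\pi]$-bound $y^{(2-b)\n+\frac1{12}}(\m(m)-y)^{-\frac54}$ to the $[\e,\pi-\e]$-bound $y^{(1-b)\n+\frac1{12}}(\m(m)-y)^{-\frac54}$ from Lemma~\ref{ampesti}\,(iv) away from $\f=0,\pi$, and separately handle $\f$ near $0$ or $\pi$. Away from $\f=0,\pi$ the same summation as above gives $|I_{3,\s_2}(y,\f)|\lesssim y^{(1-b)\n}$ directly. For $\f\in[0,\e)\cup(\pi-\e,\pi]$ with $\e$ small, the key point is that $\pa_mS_{\s_2}(m,y,\f)=\s_2\f-\frac{\pi}{2}b$ is independent of $m$ and, since $b\notin 2\mathbb{Z}$, both $-\frac{\pi}{2}b$ and $\s_2\pi-\frac{\pi}{2}b$ lie at positive distance from $2\pi\mathbb{Z}$; choosing $\e$ small enough we get ${\rm dist}(\pa_mS_{\s_2}(m,y,\f),2\pi\mathbb{Z})\ge c_\e>0$ for all such $\f$. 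Combined with $\pa_m^{\a+1}S_{\s_2}=0$ ($\a\ge1$) and the derivative bounds $|\pa_m^{\a}\z_{3,\s_2}(m,y,\f)|\lesssim y^{(2-b)\n-\frac14-\frac{\a}{3}}$ from Lemma~\ref{ampesti}\,(iv), Proposition~\ref{discsum1} applies with $M\sim y$, $\rho=\frac13$, $r=c_\e$, $k=(2-b)\n-\frac14$, giving $|I_{3,\s_2}(y,\f)|\lesssim y^{(2-b)\n-\frac14+1}(1+y^{1/3})^{-N}\lesssim y^{-N'}$ for any $N'$, in particular $\lesssim y^{(1-b)\n}$. Assembling the three $\f$-regimes and the negligible far piece completes the proof.

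The main obstacle is organizational rather than deep: one must make sure the $\f$-small regime is genuinely handled by the non-stationary estimate (Proposition~\ref{discsum1}) and not by the crude amplitude bound — the crude bound only gives $y^{(2-b)\n}$ there — so the non-stationarity ${\rm dist}(\s_2\f-\frac{\pi}{2}b,2\pi\mathbb{Z})\gtrsim 1$ forced by $b\notin 2\mathbb{Z}$ is doing the real work, exactly as in the proof of Proposition~\ref{I_2prop}. The summation estimate via Lemma~\ref{monotonesum} with the exponent bookkeeping ($+\frac1{12}$ from the amplitude against $-\frac1{12}$ from $\sum(\m(m)-y)^{-5/4}$) is routine but must be stated carefully since it is exactly what makes the near piece match the claimed exponent.
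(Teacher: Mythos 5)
Your proposal is correct and follows essentially the same route as the paper: the same near/far splitting in $\m(m)/y$, the crude bound from Lemma~\ref{ampesti}\,(iv) summed via Lemma~\ref{monotonesum} (with the $+\frac{1}{12}$ versus $-\frac{1}{12}$ bookkeeping) for $b\in 2\mathbb{Z}$ and for $\f\in[\e,\pi-\e]$, and Proposition~\ref{discsum1} with $r=c_\e$, $M\sim y$, $\rho=\frac13$, $k=(2-b)\n-\frac14$ exploiting $\mathrm{dist}\bigl(\s_2\f-\frac{\pi}{2}b,2\pi\mathbb{Z}\bigr)\gtrsim 1$ near $\f=0,\pi$ when $b\notin2\mathbb{Z}$. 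Only make sure the discrete non-stationary phase estimate is applied to the cut-off (near) piece, whose amplitude is compactly supported in $[0,Cy]$, exactly as in the paper.
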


\begin{proof}We recall $\m(m)=b(m+\n)$. Taking $\chi\in C^{\infty}(\R;[0,1])$ such that $\chi(\m)=1$ for $\m\leq 2$ and~${\chi(\m)=0}$ for $\m\geq 4$ and setting $\overline{\chi}=1-\chi$, we write
\begin{align*}
I_{3,\s_2}(y,\f)&= \left(\sum_{m=1}^{\infty}(\chi(\m(m)/y)+\overline{\chi}(\m(m)/y)) \z_{3,\s_2}(m,y,\f){\rm e}^{{\rm i}S_{\s}(m,y,\f)} \right)\\
&=: I_{3,1,\s_2}(y,\f)+I_{3,2,\s_2}(y,\f).
\end{align*}

First, we deal with the second term $I_{3,2,\s_2}(y,\f)$. Let $N>0$. Then Lemma \ref{ampesti}\,(iv) implies
\begin{align*}
|\z_{3,\s_2}(m,y,\f)|\lesssim (1+m)^{-N-1}\qquad \text{for}\quad m\in \supp \overline{\chi}(\m(m)/y).
\end{align*}
Thus we obtain
\begin{align*}
|I_{3,2,\s_2}(y,\f)|\lesssim \sum_{\m(m)\geq 2y,m\geq 1}^{\infty}(1+m)^{-N-1}\lesssim y^{-N}.
\end{align*}

Next, we deal with the first term $I_{3,1,\s_2}(y,\f)$. Lemma \ref{ampesti}\,(iv) implies
\begin{align}\label{zeta32}
|\z_{3,\s_2}(m,y,\f)|\lesssim y^{(2-b)\n+\frac{1}{12}}(\m(m)-y)^{-\frac{5}{4}}\qquad \text{for}\quad m\in \supp \chi(\m(m)/y).
\end{align}
Hence,
\begin{align*}
|I_{3,1,\s_2}(y,\f)|\lesssim{}& y^{(2-b)\n+\frac{1}{12}}\sum_{\m(m)\in [y+\frac{1}{2}y^{\frac{1}{3}},4y]}(\m(m)-y)^{-\frac{5}{4}}\\
\lesssim{}& y^{(2-b)\n+\frac{1}{12}}\int_{y+\frac{1}{2}y^{\frac{1}{3}}-b}^{4y}(\m-y)^{-\frac{5}{4}}{\rm d}\m
\lesssim y^{(2-b)\n},
\end{align*}
where we use Lemma \ref{monotonesum} in the second inequality.
Consequently, we obtain $|I_{3,1,\s_2}(y,\f)|\lesssim y^{(2-b)\n}$, which completes the proof for $b\in 2\mathbb{Z}$. When $b\notin 2\mathbb{Z}$, a similar argument implies $|I_{3,1,\s_2}(y,\f)|\lesssim y^{(1-b)\n}$ uniformly in $ \f\in [\e,\pi-\e]$ (with a constant $0<\e<\pi/2$), where we use the second estimate in Lemma \ref{ampesti}\,(iv) instead of \eqref{zeta32}. Thus, it remains to prove the bound of~${I_{3,1,\s_2}(y,\f)}$ when $b\notin 2\mathbb{Z}$ for $\f\in [0,\e]\cup [\pi-\e,\pi]$ with $\e>0$ small enough.

Finally, we suppose $b\notin 2\mathbb{Z}$ and prove the bound for $I_{3,1,\s_2}(y,\f)$ for $\f\in [0,\e]\cup [\pi-\e,\pi]$ with~${\e>0}$ small enough. In this case, we have
$
-\frac{\pi}{2}b$, $ \s_2\pi-\frac{\pi}{2}b\notin 2\pi \mathbb{Z}$.
Then we find $\e>0$ small enough such that there is $c_{\e}>0$ satisfying $\mathrm{dist}(\s_2\f-\frac{\pi}{2}b,2\pi\mathbb{Z})\geq c_{\e}$ for $\f\in [0,\e)\cup (\pi-\e,\pi]$. This implies $\mathrm{dist}(\pa_mS_{3,\s_2}(m,y,\f), 2\pi\mathbb{Z})\geq c_{\e}$ for $\f\in [0,\e)\cup (\pi-\e,\pi]$. Moreover, it follows from Lemma \ref{ampesti}\,(iv) that \smash{$|\pa_{m}^{\a}\z_{3,1,\s_2}(m,y,\f)|\lesssim y^{(2-b)\n-\frac{1}{4}-\frac{\a}{3}}$}. Clearly, we have $\pa_m^{\a+1}S_{3,\s_2}(m,y,\f)=0$ for $\a\geq 1$.
Now applying Proposition \ref{discsum1} with $k=(2-b)\n-\frac{1}{4}$, $r=c_{\e}$, $M\sim y$ and $\rho=\frac{1}{3}$, for each $N>0$, we obtain
$
|I_{3,1,\s_2}(y,\f)|\lesssim y^{-N}$ for $\f\in [0,\e)\cup (\pi-\e,\pi]$.
This completes the proof.
\end{proof}

\section[Estimates of I\_\{1,s\}]{Estimates of $\boldsymbol{ I_{1,\s}}$}

In this section, we prove estimates for $I_{1,\s}$, which are more delicate than those of $I_{2,\s_2}$, $I_{3,\s_2}$.
We assume $b>0$, $\n\geq 0$ and let $\s\in \{\pm\}\times \{\pm\}$.

\subsection[General bounds for b>0]{General bounds for $\boldsymbol{b>0}$}

\begin{Proposition}\label{I_1prop}
Let $\e>0$. Then
\begin{align*}
|I_{1,\s}(y,\f)|\lesssim \begin{cases}
y^{(1-b)\n}& \text{for}\  \f\in [\e,\pi-\e], \\
y^{(2-b)\n}& \text{for}\  \f\in [0,\pi]
\end{cases}
\end{align*}
and for $y\gg 1$.
\end{Proposition}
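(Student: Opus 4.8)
The plan is to estimate $I_{1,\s}(y,\f)=\sum_{m=1}^\infty \z_{1,\s}(m,y,\f)\,{\rm e}^{{\rm i}S_{1,\s}(m,y,\f)}$ by splitting the summation region $\Omega_1$ (where $1\le\m(m)\le y-\tfrac12 y^{1/3}$) into a ``small $m$'' zone $\m(m)\le\d y$ and a ``near-turning-point'' zone $\d y\le\m(m)\le y-\tfrac12 y^{1/3}$, with $\d\in(0,1)$ a fixed small constant, and then treating each zone by a combination of crude $\ell^1$-summation (triangle inequality) and the discrete non-stationary phase Proposition~\ref{discsum1}, depending on whether $\pa_mS_{1,\s}$ is bounded away from $2\pi\Z$. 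The point of the proposition (as opposed to Theorem~\ref{keythm}(i)) is that no $\e$ separation from $\f=0,\pi$ is being used in the $[0,\pi]$ bound $y^{(2-b)\n}$, and only a modest gain to $y^{(1-b)\n}$ is claimed on $[\e,\pi-\e]$; so here we do \emph{not} need the refined cancellation argument of Section~\ref{subsec0b2imp}, just the worst-case decay of the Gegenbauer amplitude from Lemma~\ref{ampesti}(ii).

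For the near-turning-point zone $\d y\le \m(m)\le y-\tfrac12 y^{1/3}$, I would simply use the triangle inequality together with the amplitude bound $|\z_{1,\s}(m,y,\f)|\lesssim y^{(2-b)\n-1/4}(y-\m(m))^{-1/4}(1+m\sin\f)^{-\n}$ from Lemma~\ref{ampesti}(ii) (dropping the harmless $(1+m\sin\f)^{-\n}\le 1$ factor for the $[0,\pi]$ bound, and keeping it, combined with $m\gtrsim y$, to gain $y^{-\n}$ on $[\e,\pi-\e]$). Converting the sum over $m$ with $\m(m)\in[\d y, y-\tfrac12 y^{1/3}]$ into an integral via Lemma~\ref{monotonesum} (the function $(y-\m)^{-1/4}$ is increasing, so one should instead sum $(y-\m(m))^{-1/4}$ by comparing to $\int (y-\m)^{-1/4}{\rm d}\m$ over $[\d y, y]$, which is $\lesssim y^{3/4}$), one gets $\lesssim y^{(2-b)\n-1/4}\cdot y^{3/4}=y^{(2-b)\n+1/2}$ — which is \emph{too large}. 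So the near-turning-point zone genuinely requires cancellation: the right tool is Proposition~\ref{discsum1} applied on dyadic-in-$(y-\m)$ blocks, using $|\pa_mS_{1,\s}|$ bounded below (by Lemma~\ref{phaseest} and an elementary lower bound on $\mathrm{dist}(-\s_1 b\cos^{-1}(\m(m)/y)+\s_2\f-\tfrac\pi2 b,2\pi\Z)$ on the block, after possibly removing an exceptional sub-block where $\pa_mS_{1,\s}$ is near $2\pi\Z$) together with the symbol estimates of $\z_{1,\s}$ and $S_{1,\s}$ there. The main obstacle is exactly this: on each block $y-\m(m)\sim 2^{-\ell}y$ one applies Proposition~\ref{discsum1} with $M\sim 2^{-\ell}y$, $\rho$ adapted to the block ($|\pa_m^{\a+1}S_{1,\s}|\lesssim y^{-1/2}(y-\m(m))^{-\a+1/2}$ forces $\rho$-scale $(y-\m(m))$, i.e. $M^{\rho}\sim 2^{-\ell}y$), and sums the resulting geometric series in $\ell$; one must check the $r$-separation can be arranged uniformly and that the $m$-widths add up correctly.

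For the small-$m$ zone $\m(m)\le\d y$, where Lemma~\ref{ampesti}(ii) gives $|\pa_m^\a\z_{1,\s}|\lesssim y^{-b\n-1/2}(1+m)^{2\n-\a}(1+m\sin\f)^{-\n}$ and Lemma~\ref{phaseest}(i) gives $|\pa_m^{\a+1}S_{1,\s}|\lesssim y^{-\a}$ (so $\pa_mS_{1,\s}$ has a bounded number of $2\pi$-critical strings in $m$), I would again localize in $m$: on the region $m\lesssim 1$ use the triangle inequality directly (bounded number of terms, each $\lesssim y^{-b\n-1/2}$); for $1\ll m\le\d y/b$, decompose according to $\mathrm{dist}(\pa_mS_{1,\s},2\pi\Z)$ — where it is $\gtrsim r$ for suitable $r$, apply Proposition~\ref{discsum1} to get rapid decay; near the (boundedly many) points where it is small, crudely bound the $O(1)$-length pieces. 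Summing $|\z_{1,\s}|\lesssim y^{-b\n-1/2}(1+m)^{2\n}$ over $m\le\d y$ gives $\lesssim y^{-b\n-1/2}\cdot y^{2\n+1}= y^{(2-b)\n+\n+1/2}$ which again needs the $(1+m\sin\f)^{-\n}$ factor and cancellation to be reduced; with cancellation via Proposition~\ref{discsum1} the effective range of $m$ contributing is $O(1/r)$ near stationary strings, producing the claimed $y^{(2-b)\n}$ on $[0,\pi]$ and, combined with the Gegenbauer gain on $[\e,\pi-\e]$, the $y^{(1-b)\n}$ bound. Assembling the four sign choices $\s\in\{\pm\}^2$ and the two zones then completes the proof; the genuinely delicate point throughout is managing the $m$-windows on which $\pa_mS_{1,\s}$ fails to be separated from $2\pi\Z$ so that the discrete non-stationary phase lemma applies on the complement with uniform constants.
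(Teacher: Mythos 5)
Your reduction is right up to the point where cancellation is needed (the triangle-inequality count showing that both zones overshoot), but the core of your plan --- Proposition \ref{discsum1} off the stationary set plus a crude bound on the remaining pieces --- has a concrete gap in how you size those pieces. Proposition \ref{discsum1} is a \emph{non}-stationary lemma: it gives nothing on the $m$-ranges where $\pa_mS_{1,\s}$ approaches $2\pi\Z$, and since $\pa_m^2S_{1,\s}=\s_1b^2\bigl(y^2-\m(m)^2\bigr)^{-1/2}\sim y^{-1}$ on the bulk of $\Omega_1$, the set where $\mathrm{dist}(\pa_mS_{1,\s},2\pi\Z)\le r$ near a stationary value of $m$ is an interval of length $\sim ry$, not $O(1)$. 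If you excise only $O(1)$-length pieces, the separation available on the complement is only $r\sim y^{-1}$, so the gain $(1+rM^{\rho})^{-N}$ with $M^{\rho}\lesssim y$ is no gain at all; if instead you take $r\sim y^{-1/2}$, the exceptional window has length $\sim y^{1/2}$ and the crude bound there is $y^{1/2}\cdot y^{(2-b)\n-\frac12}=y^{(2-b)\n}$ with zero margin, so the complementary dyadic sums (in the distance to the stationary point, in $m$ --- your amplitude bounds from Lemma \ref{ampesti}\,(ii) are $m$-homogeneous, so a single scale $M^{\rho}$ cannot cover all of $\{\m(m)\le\d y\}$ --- and in $y-\m(m)$ near the turning point) must close without any loss, after re-centering the support and re-checking the hypotheses of Proposition \ref{discsum1} on each block (on a block $y-\m(m)\sim D$ the phase hypothesis forces $r\gtrsim\sqrt{D/y}$, which is exactly, not comfortably, what is available). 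None of this is carried out, and your own description is inconsistent about it (``$O(1)$-length pieces'' versus ``effective range $O(1/r)$''). You also miss the degenerate case $\s_2\f-\frac{\pi}{2}b\in2\pi\Z$: then $\pa_mS_{1,\s}$ is uniformly close to $2\pi\Z$ on the entire near-turning zone, there is no ``exceptional sub-block'' to remove, and one has to use $\mathrm{dist}(\pa_mS_{1,\s},2\pi\Z)\sim\sqrt{(y-\m(m))/y}$ and verify that the resulting sum over dyadic values of $y-\m(m)$ comes out to exactly $y^{(2-b)\n}$. In short, what is missing is a discrete \emph{stationary} phase estimate, which is precisely the hard part of the proposition.

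For comparison, the paper avoids this entirely: it applies \eqref{discon} to $I_{1,\s}$ itself, disposes of the terms with $2\pi|q|>\sup|\pa_mS_{1,\s}|$ by a single integration by parts (the bounds of Lemma \ref{ampesti}\,(ii) are integrable over $\Omega_1$), and for the finitely many remaining $q$ rescales $\m(m)=y\m$ and invokes the continuous van der Corput estimates of Lemma \ref{stphaselem2}: away from $\m=1$ the rescaled phase satisfies $|\pa_{\m}^2S_q|\ge1$, and near $\m=1$ the substitution $\m'=\sqrt{1-\m}$ converts the $(1-\m)^{-1/4}$ singularity of the amplitude into a symbol of order $\frac12$ while the third derivative of the phase is bounded below. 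This gives $|I_q|\lesssim y^{(2-b)\n}$ uniformly in $q$ and $\f\in[0,\pi]$, with the $[\e,\pi-\e]$ improvement coming only from the sharper amplitude bounds --- the ``scaling instead of dyadic decomposition'' simplification announced in the introduction. Your route, if completed, would amount to re-proving a discrete analogue of Lemma \ref{stphaselem2}, which is substantially more work than the statement requires; as written, it does not yet constitute a proof.
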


\begin{proof}
We deal with the case $\f\in [0,\pi]$ only since the proof is almost same if we use the first estimate of Lemma \ref{ampesti}\,(ii) instead of the second one.

The identity \eqref{discon} implies
\begin{align}
I_{1,\s}(y,\f)={}&\int_{\R}\z_{1,\s}(m,y,\f){\rm e}^{{\rm i}S_{1,\s}(m,y,\f)}{\rm d}m\nonumber\\
&-\frac{1}{2\pi {\rm i}}\sum_{q\in\mathbb{Z}\setminus \{0\}}\frac{1}{q}\int_{\R}\tilde{\z}_{1,\s}(m,y,\f){\rm e}^{{\rm i}S_{1,\s}(m,y,\f)+2\pi {\rm i}qm}{\rm d}m,\label{I1sigmaint}
\end{align}
where we set
$
\tilde{\z}_{1,\s}(m,y,\f)=\pa_m\z_{1,\s}(m,y,\f)+{\rm i}(\pa_mS_{1,\s})(m,y,\f)\z_{1,\s}(m,y,\f)$.
By Lemmas \ref{phaseest} and \ref{ampesti}\,(ii), for each $\z\in \big\{\z_{1,\s},\tilde{\z}_{1,\s}\big\}$ we have
\begin{align}\label{sevzetaest}
|\pa_{m}^{\a}\z(m,y,\f)|\lesssim \begin{cases} y^{-b\n-\frac{1}{2}}(1+m)^{2\n-\a} & \text{when}\  \m(m)\leq \frac{3}{4}y, \\
y^{(2-b)\n-\frac{1}{4}} (y-\m(m))^{-\frac{1}{4}-\a}& \text{when}\  \frac{3}{4}y\leq \m(m)\leq y-\frac{1}{2}y^{\frac{1}{3}}
\end{cases}
\end{align}
for $m\geq 1$, $y\gg1$ and $\f\in [0,\pi]$. Moreover, $\supp \z\subset \Omega_1$ holds.

Now we consider an integral
\[
I_q:=\int_{\R}\z(m,y,\f){\rm e}^{{\rm i}S_{1,\s}(m,y,\f)+2\pi {\rm i}qm}{\rm d}m
\]
for $q\in\mathbb{Z}$ and $\z\in \{\z_{1,\s},\tilde{\z}_{1,\s}\}$. Taking $c_0>0$ such that $|\pa_mS_{1,\s}(m,y,\f)|\leq c_0$ for $m\geq 1$, $y\gg 1$ and $\f\in [0,\pi]$. Now we show that
\begin{align*}
|I_q|\lesssim \begin{cases}
y^{(2-b)\n} & \text{for}\  |q|\leq c_0/\pi,\\
y^{(2-b)\n}(1+|q|)^{-1}& \text{for}\  |q|> c_0/\pi,
\end{cases}
\end{align*}
and for $y\gg 1$ and $\f\in [0,\pi]$.
These estimates immediately imply the bound $|I_{1,\s}(y,\f)|\lesssim y^{(2-b)\n}$ due to the identity \eqref{I1sigmaint}.

First, we deal with the case $|q|> c_0/\pi$. Since $|\pa_m(S_{1,\s}(m,y,\f)+2\pi qm)|\geq 2\pi|q|-c_0\gtrsim (1+|q|)$, for $\n>0$, the integration by parts yields
\begin{align*}
|I_{q}|={}&\left|\int_{\R}\pa_m\left(\frac{1}{\pa_m(S_{1,\s}(m,y,\f)+2\pi qm) } \z(m,y,\f)\right) {\rm e}^{{\rm i}S_{1,\s}(m,y,\f)+2\pi {\rm i}qm}{\rm d}m \right| \\
\leq{}&\int_{\R}\left(\frac{|\pa_m^2S_{1,\s}(m,y,\f)||\z(m,y,\f)|}{|\pa_m(S_{1,\s}(m,y,\f)+2\pi qm)|^2 } +\frac{|\pa_m\z(m,y,\f)|}{|\pa_m(S_{1,\s}(m,y,\f)+2\pi qm)|} \right) {\rm d}m \\
\lesssim{}& (1+|q|)^{-1} y^{-b\n-\frac{1}{2}}\int_{1\leq\m(m)\leq \frac{3}{4}y,m\geq 1}
(1+m)^{2\n-1}{\rm d}m \\
&+(1+|q|)^{-1}y^{(2-b)\n-\frac{1}{4}}\int_{\frac{3}{4}y\leq\m(m)\leq y-\frac{1}{2}y^{\frac{1}{3}}}(y-\m(m))^{-\frac{5}{4}}{\rm d}m\lesssim (1+|q|)^{-1}y^{(2-b)\n},
\end{align*}
where we use \eqref{sevzetaest}, Lemma \ref{phaseest}\,(i) and the support property $\supp\z\subset \Omega_1$.
The case $\n=0$ can be proved similarly if we use the integration by parts twice.

Next, we consider the case $|q|\leq c_0/\pi$. By the change of variable $\m(m)(=bm+b\n)=y\m$,
\begin{align*}
I_q=&b^{-1}{\rm e}^{-{\rm i}(\s_2\frac{\f}{b}-\frac{\pi}{2}+\frac{2\pi q}{b})b\n} y^{(2-b)\n+\frac{1}{2}}\int_{\R}{\rm e}^{{\rm i}yS_q(\m,\f) }\c_{y,\f}(\m) {\rm d}\m,
\end{align*}
where we set
\begin{align*}
S_q(\m,\f)=\s_1h_1(\m) +\left(\s_2\frac{\f}{b}-\frac{\pi}{2}+\frac{2\pi q}{b}\right)\m,\qquad \c_{y,\f}(\m)=y^{(b-2)\n+\frac{1}{2}}\z\left(\frac{y\m}{b}-\n,y,\f\right).
\end{align*}
Thus it remains to show
\begin{align}\label{I_1stphase}
\left|\int_{\R}{\rm e}^{{\rm i}yS_q(\m,\f) }\c_{y,\f}(\m) {\rm d}\m\right|\lesssim y^{-\frac{1}{2}}\qquad \text{for}\quad y\gg 1,\quad \f\in [0,\pi].
\end{align}
By \eqref{sevzetaest},
\begin{align*}
|\pa_\m^{\a}\c_{y,\f}(\m)|\lesssim \m^{2\n-\a}(1-\m)^{-\frac{1}{4}-\a},\qquad \supp \c_{y,\f}\subset \big\{\m\in\R\mid 0\leq \m\leq 1-\frac{1}{2}y^{-\frac{2}{3}} \big\}
\end{align*}
for $y\gg 1$ and $\f\in [0,\pi]$.

We write
\begin{align*}
\int_{\R}{\rm e}^{{\rm i}yS_q(\m,\f) }\c_{y,\f}(\m){\rm d}\m&=\int_{\R}{\rm e}^{{\rm i}yS_q(\m,\f) }\c_{y,\f}(\m)\g_1(\m){\rm d}\m+\int_{\R}{\rm e}^{{\rm i}yS_q(\m,\f) }\c_{y,\f}(\m)\g_2(\m){\rm d}\m\\
&=I_1+I_2,
\end{align*}
where $\g_1,\g_2\in C_c^{\infty}(\R;[0,1])$ satisfy $\g_1(\m)+\g_2(\m)=1$ for $\m\in [0,1]$, $\g_1(\m)=1$ for $0\leq \m\leq 1-2\d$ and $\g_2(\m)=1$ for $1-\d\leq \m\leq 1$ where $\d>0$ is determined later.
Since
\begin{align*}
\big|\pa_{\m}^2S_{q}(\m,\f)\big|=\big|\s_1\pa_{\m}^2h_1(\m)\big|=\big|\bigl(1-\m^2\bigr)^{-\frac{1}{2}}\big|\ge 1\qquad \text{for}\quad \m\in \supp \g_1\cap[0,1],
\end{align*}
the stationary phase theorem (see Lemma \ref{stphaselem2}\,(i) with $\l=y$) implies $|I_1|\lesssim y^{-\frac{1}{2}}$.
On the other hand, using the change of variable $\m'=\sqrt{1-\m}$ (with $\m=1-\m'^2$), we have ${\rm d}\m=-2\m'{\rm d}\m'$ and~\smash{$
I_2=\int_{\R}{\rm e}^{{\rm i}y\tilde{S}_q(\m',\f) }\c_2(\m'){\rm d}\m'$},
where we set $\tilde{S}_q(\m',\f)=S_q\bigl(1-\m'^2,\f\bigr)$ and $\c_2(\m')=2\m'\c_{y,\f}\bigl(1-\m'^2\bigr)\g_2\bigl(1-\m'^2\bigr)$. We note that
\begin{align*}
|\pa_{\m'}^{\a}\c_2(\m')|\lesssim\m'^{\frac{1}{2}-\a},\qquad\supp\c_2\subset \left\{ \frac{1}{\sqrt{2}}y^{-\frac{1}{3}}\leq \m'\leq \sqrt{2\d}\right\}
\end{align*}
for $y\gg 1$ and $\f\in [0,\pi]$ and that $\tilde{S}_q$ is smooth with respect to $\m'$ close to $0$ and $\f\in[0,\pi]$.
It follows from the identity
\begin{align*}
\pa_{\m'}^3\tilde{S}_q(\m',\f)={}&\s_1\pa_{\m'}^3\bigl(h_1\bigl(1-\m'^2\bigr)\bigr)=12\s_1\m' h_1''\bigl(1-\m'^2\bigr)-8\s_1\m'^3h_1^{(3)}\bigl(1-\m'^2\bigr)\\
={}&4\sqrt{2}\s_1+O(\m')\qquad \text{as}\quad \m'\to 0
\end{align*}
that \smash{$\big|\pa_{\m'}^3\tilde{S}_{q}(\m',\f)\big|\gtrsim 1$} for $\m'\in \supp \c_2$ and $\f\in [0,\pi]$ if $\d>0$ is small enough (here we note that~${h\bigl(1-\m'^2\bigr)}$ is smooth at $\m'=0$ although $h(\m)$ is not smooth at $\m=1$ ). Thus, the stationary phase theorem (see Lemma \ref{stphaselem2}\,(ii) with $\l=y$) implies
\smash{$
|I_2|\lesssim y^{-\frac{1}{2}}$}.
This proves \eqref{I_1stphase} and completes the proof of Proposition \ref{I_1prop}.
\end{proof}

\subsection[Improvement for 0<b<2]{Improvement for $\boldsymbol{0<b<2}$}\label{subsec0b2imp}

In this subsection, we improve Proposition \ref{I_1prop} near $\f=0$ when $0<b<2$.

\begin{Proposition}\label{I_1propimp}
We assume $0<b<2$.
Then there exist $\f_0>0$ such that
$
|I_{1,\s}(y,\f)|\lesssim
y^{(1-b)\n}$ for $y\gg 1$, $ \f\in [0,\f_0]$.
\end{Proposition}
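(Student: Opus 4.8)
The plan is to split $\Omega_1$ into the regions $\{\m(m)\le\d y\}$ and $\{\m(m)\ge\frac{\d}{2}y\}$ (with, say, $\d=\frac12$), to apply on the second region the discrete non-stationary phase Proposition \ref{discsum1} and on the first region the formula \eqref{discon}, and then to observe that among the resulting terms only one is not amenable to an ordinary (non-)stationary phase argument, namely the one tailored to Proposition \ref{Stphasemovecrit}. The number $\f_0=\f_0(b)>0$ is fixed small at the end; the hypothesis $0<b<2$ enters only through $\frac{\pi}{2}b<\pi<2\pi$, which confines $\pa_mS_{1,\s}(m,y,\f)$ to an interval of length strictly less than $2\pi$, so that $\pa_mS_{1,\s}$ (and, after shifting, $\pa_m(S_{1,\s}+2\pi qm)$) can only approach $2\pi\Z$ near small $m$, where $\cos^{-1}(\m(m)/y)\approx\frac{\pi}{2}$.

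Fix $\tilde{\chi}\in C^\infty(\R;[0,1])$ with $\tilde{\chi}\equiv1$ on $(-\infty,\d/2]$ and $\supp\tilde{\chi}\subset(-\infty,\d]$, and write $\z_{1,\s}=\z_{1,\s}^<+\z_{1,\s}^>$ with $\z_{1,\s}^<(m,y,\f)=\tilde{\chi}(\m(m)/y)\z_{1,\s}(m,y,\f)$. By Lemma \ref{ampesti}(ii) the piece $\z_{1,\s}^>$ is supported in $\{\frac{\d}{2}y\le\m(m)\le y-\frac12 y^{1/3}\}$, where $(y-\m(m))^{-1/4-\a}\lesssim y^{-1/12-\a/3}$ and $(1+m\sin\f)^{-\n}\le1$ yield $|\pa_m^\a\z_{1,\s}^>(m,y,\f)|\lesssim y^{(2-b)\n-1/3-\a/3}$ with $m$-support of length $\lesssim y$; moreover on this region $\cos^{-1}(\m(m)/y)\in[cy^{-1/3},\cos^{-1}(\d/2)]$, so for $\f_0$ small the formulas for $\pa_mS_{1,\s}$ give $\mathrm{dist}(\pa_mS_{1,\s}(m,y,\f),2\pi\Z)\ge r$ for some $r=r(b)>0$, and Lemma \ref{phaseest}(i) gives $|\pa_m^{\a+1}S_{1,\s}|\lesssim y^{-\a/3}|\pa_mS_{1,\s}|$, uniformly in $y\gg1$ and $\f\in[0,\f_0]$. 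Proposition \ref{discsum1} with $M\sim y$, $\rho=\frac13$, $k=(2-b)\n-\frac13$ then gives, for every $N$,
\[
\Bigl|\sum_{m=1}^\infty\z_{1,\s}^>(m,y,\f)\,{\rm e}^{{\rm i}S_{1,\s}(m,y,\f)}\Bigr|\lesssim y^{(2-b)\n+2/3-N/3}\lesssim y^{(1-b)\n}
\]
once $N$ is large enough in terms of $\n$.

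For the contribution of $\z_{1,\s}^<$ I would use \eqref{discon}, which expresses $\sum_m\z_{1,\s}^<{\rm e}^{{\rm i}S_{1,\s}}$ as $\int_\R\z_{1,\s}^<{\rm e}^{{\rm i}S_{1,\s}}{\rm d}m$ minus $\frac{1}{2\pi {\rm i}}\sum_{q\ne0}\frac1q\int_\R\tilde{\z}_{1,\s}^<\,{\rm e}^{{\rm i}(S_{1,\s}+2\pi qm)}{\rm d}m$, with $\tilde{\z}_{1,\s}^<=\pa_m\z_{1,\s}^<+{\rm i}(\pa_mS_{1,\s})\z_{1,\s}^<$ obeying, by Lemmas \ref{phaseest}(i) and \ref{ampesti}(ii), the same symbol bound $\lesssim y^{-b\n-1/2}(1+m)^{2\n-\a}(1+m\sin\f)^{-\n}$ on $\{\m(m)\le\d y\}$ as $\z_{1,\s}^<$. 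On $\supp\z_{1,\s}^<$ one has $\cos^{-1}(\m(m)/y)\ge\cos^{-1}\d>0$, hence for $q\ne0$, and also for $q=0$ when $\s_1=+$, the phase $S_{1,\s}+2\pi qm$ satisfies $|\pa_m(\cdot)|\gtrsim1+|q|$ for $\f_0$ small (again using $\frac{\pi}{2}b<\pi$); integrating by parts $N>2\n+1$ times and using $|\pa_m^{\a+1}S_{1,\s}|\lesssim y^{-\a}$ (Lemma \ref{phaseest}(i)) bounds the corresponding integral by $(1+|q|)^{-N}y^{-b\n-1/2}\int_1^\infty(1+m)^{2\n-N}{\rm d}m\lesssim(1+|q|)^{-N}y^{-b\n-1/2}$, so the $q$-sum is $\lesssim y^{-b\n-1/2}\le y^{(1-b)\n}$.

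The only remaining term — and the main obstacle — is the $q=0$ term with $\s_1=-$, namely $\int_\R\z_{1,(-,\s_2)}^<{\rm e}^{{\rm i}S_{1,(-,\s_2)}}{\rm d}m$, whose phase is not non-stationary near small $m$. Here I would rescale $\m(m)=y\m$ (so ${\rm d}m=\frac{y}{b}{\rm d}\m$) and write $S_{1,(-,\s_2)}(m,y,\f)=yS_0(\m,\f)+c(\f,y)$ with $S_0(\m,\f)=-h_1(\m)+(\s_2\f/b-\frac{\pi}{2})\m$ and $\c_{y,\f}(\m)=\z_{1,(-,\s_2)}^<((y\m-b\n)/b,y,\f)$; Lemma \ref{ampesti}(ii) together with $1+m\sim y\m$ and $1+m\sin\f\sim1+b^{-1}y\f\m$ (for $\f\le\f_0$) gives $|\pa_\m^\a\c_{y,\f}(\m)|\lesssim y^{2\n-b\n-1/2}\m^{2\n-\a}(1+y\f\m)^{-\n}$ with $\supp\c_{y,\f}\subset[0,\d]$, while $\pa_\m^2S_0=-(1-\m^2)^{-1/2}$ gives $|\pa_\m^2S_0|\ge1$, $|\pa_\m\pa_\f S_0|=1/b$ on $[0,\d]$, and the unique critical point of $S_0(\cdot,\f)$ is $\m_0(\f)=\sin(\s_2\f/b)$, smooth in $\f$ with $\m_0(0)=0$. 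After harmlessly modifying $S_0$ outside $[0,\d]$ so that these two lower bounds hold on all of $[0,1]^2$, Proposition \ref{Stphasemovecrit} with $\l=y$ yields $\bigl|\int_\R\c_{y,\f}{\rm e}^{{\rm i}yS_0}{\rm d}\m\bigr|\lesssim y^{2\n-b\n-1/2}\cdot y^{-\n-1/2}=y^{\n-b\n-1}$, whence $\bigl|\int_\R\z_{1,(-,\s_2)}^<{\rm e}^{{\rm i}S_{1,(-,\s_2)}}{\rm d}m\bigr|\lesssim y^{(1-b)\n}$. Adding the three types of contributions over the four sign patterns $\s\in\{\pm\}\times\{\pm\}$ and choosing $\f_0=\f_0(b)>0$ small enough for all of the above gives $|I_{1,\s}(y,\f)|\lesssim y^{(1-b)\n}$ for $y\gg1$, $\f\in[0,\f_0]$. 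The point requiring real care is precisely this last step: the weight $(1+m\sin\f)^{-\n}$ supplied by the Gegenbauer asymptotics (Proposition \ref{Gegenasymp}) is exactly the factor $(1+\l\f\m)^{-\n}$ in the hypothesis of Proposition \ref{Stphasemovecrit}, and for $0<b<2$ the critical point $\m_0(\f)=\sin(\s_2\f/b)$ of the rescaled phase passes through the origin at $\f=0$, which is the regime of small but $\l$-dependent $\f$ that Proposition \ref{Stphasemovecrit} was built to handle.
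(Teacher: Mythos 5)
Your proposal is correct and follows essentially the same route as the paper: split $\Omega_1$ at $\m(m)\sim y$, treat the outer piece by the discrete non-stationary phase Proposition \ref{discsum1} (using $0<b<2$ exactly as in Lemma \ref{S_1implem} to keep $\pa_mS_{1,\s}$ at positive distance from $2\pi\Z$ there), and treat the inner piece via \eqref{discon}, killing the $q\neq0$ integrals by integration by parts and estimating the $q=0$ integral after the rescaling $\m(m)=y\m$ with Proposition \ref{Stphasemovecrit}. The only (harmless) deviation is that you handle the $q=0$, $\s_1=+$ term by plain non-stationary phase instead of feeding all four sign patterns into Proposition \ref{Stphasemovecrit} as the paper does.
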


\begin{Remark}
Combining with Proposition \ref{I_1prop}, we obtain the uniform estimates for $\f\in [0,\pi-\e]$ with arbitrary $\e>0$.
\end{Remark}

In order to prove it, we need a more information about the phase function $S_{1,\s}$. In the part~(i) of the next lemma, we use the assumption $0<b<2$ crucially. This is used to prove that the second term of the right-hand side in \eqref{discon} is harmless.

\begin{Lemma}\label{S_1implem}\quad
\begin{itemize}
\item[$(i)$]
There exists $\e_0>0$ such that
$
|\pa_mS_{1,\s}(m,y,\f)|\leq 2\pi-\e_0
$
for all $m\in \Omega_1$ and $y\geq 1$ and~${\f\in \bigl[0,(1-\frac{b}{2})\pi\bigr]}$.

\item[$(ii)$] There exists $\f_1,c_0>0$ such that
$
|\pa_mS_{1,\s}(m,y,\f)|\geq c_0
$
for $m\in \Omega_1$, $y\geq 1$ with $\m(m)\geq \frac{1}{2}y$ and $\f\in [0,\f_1]$.
\end{itemize}
\end{Lemma}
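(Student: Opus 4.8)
The plan is to read off both statements directly from the explicit formula
\[
\pa_mS_{1,\s}(m,y,\f)=-\s_1 b\cos^{-1}\left(\frac{\m(m)}{y}\right)+\s_2\f-\frac{\pi}{2}b
\]
recorded just above the lemma, using only that $\cos^{-1}$ is a decreasing bijection of $[0,1]$ onto $[0,\pi/2]$. The one observation that makes everything uniform is that for $m\in\Omega_1$ we have $1\le\m(m)\le y-\frac{1}{2}y^{1/3}$, so $0<\m(m)/y<1$ and hence $\cos^{-1}(\m(m)/y)\in(0,\pi/2)$; since $y$ enters the formula only through this term, every bound obtained below is automatically independent of $y\ge 1$.

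For part $(i)$ I would split according to the sign $\s_1$. If $\s_1=+$, then $-b\cos^{-1}(\m(m)/y)-\frac{\pi}{2}b$ lies in $\bigl(-b\pi,-\frac{\pi}{2}b\bigr)$, and adding $\s_2\f\in\bigl[-(1-\frac{b}{2})\pi,(1-\frac{b}{2})\pi\bigr]$ keeps $\pa_mS_{1,\s}(m,y,\f)$ inside $\bigl(-\pi-\frac{\pi}{2}b,\pi-b\pi\bigr)$; if $\s_1=-$, the same computation puts it inside $\bigl(-\pi,\pi-\frac{\pi}{2}b\bigr)$. In either case $|\pa_mS_{1,\s}(m,y,\f)|<\pi+\frac{\pi}{2}b$, which is strictly less than $2\pi$ precisely because $0<b<2$; hence one may take $\e_0:=(1-\frac{b}{2})\pi>0$. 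This is the only point where the standing assumption $b<2$ is used, as anticipated in the remark preceding the lemma.

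For part $(ii)$ the extra hypothesis $\m(m)\ge\frac{1}{2}y$ forces $\m(m)/y\in[\frac{1}{2},1)$, hence $\cos^{-1}(\m(m)/y)\in(0,\frac{\pi}{3}]$. At $\f=0$ we have $\pa_mS_{1,\s}(m,y,0)=-\s_1 b\cos^{-1}(\m(m)/y)-\frac{\pi}{2}b$: when $\s_1=+$ both terms are $\le 0$, so its absolute value is $\ge\frac{\pi}{2}b$; when $\s_1=-$ it equals $b\bigl(\cos^{-1}(\m(m)/y)-\frac{\pi}{2}\bigr)$, whose absolute value is $\ge\frac{\pi}{6}b$ since $\cos^{-1}(\m(m)/y)\le\frac{\pi}{3}$. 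Thus $|\pa_mS_{1,\s}(m,y,0)|\ge\frac{\pi b}{6}$ uniformly in $m$ and $y$. Finally $\pa_mS_{1,\s}(m,y,\f)-\pa_mS_{1,\s}(m,y,0)=\s_2\f$, so taking $\f_1:=\frac{\pi b}{12}$ gives $|\pa_mS_{1,\s}(m,y,\f)|\ge\frac{\pi b}{6}-\f_1=\frac{\pi b}{12}=:c_0>0$ for all $\f\in[0,\f_1]$. I do not expect a genuine obstacle here: the whole argument is a short case analysis on the signs $\s_1,\s_2$, and the only step that needs care is recognizing that the window $[0,(1-\frac{b}{2})\pi]$ for $\f$ in $(i)$ is chosen exactly so that the worst case $\pi+\frac{\pi}{2}b$ stays below $2\pi$ when $b<2$, while keeping all constants $y$-independent (which is automatic).
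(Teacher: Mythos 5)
Your proposal is correct and follows essentially the same route as the paper: both parts are read off directly from the explicit formula for $\pa_mS_{1,\s}$, using $\cos^{-1}(\m(m)/y)\in[0,\pi/2]$ (and $\le\pi/3$ when $\m(m)\ge\frac{1}{2}y$), with the same choices $\e_0=\bigl(1-\frac{b}{2}\bigr)\pi$ and $\f_1=c_0=\frac{\pi b}{12}$. The paper merely compresses your sign-by-sign case analysis into a single triangle-inequality estimate, so there is no substantive difference.
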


\begin{Remark}
If $b\geq 2$, then $\pa_mS_{1,\s}(m,y,\f)$ can take a value in $2\pi \mathbb{Z}\setminus \{0\}$. This prevents better estimates of $I_{1,\s}$.
\end{Remark}

\begin{proof}[Proof of Lemma~\ref{S_1implem}]
We recall $\pa_mS_{1,\s}(m,y,\f)=-b\s_1\cos^{-1}\bigl(\frac{\m(m)}{y}\bigr)+\s_2\f-\frac{\pi}{2}b$.

 $(i)$
This follows from a direct calculation: $\bigl|-b\s_1\cos^{-1}(z)+\s_2\f-\frac{\pi}{2}b\bigr|\leq \pi b+\f\leq (1+\frac{b}{2})\pi$ for $\f\in \bigl[0,\bigl(1-\frac{b}{2}\bigr)\pi\bigr]$ and $0\le z\le 1$. Then we set $\e_0=\bigl(1-\frac{b}{2}\bigr)\pi$, which is positive since $0<b<2$.

 $(ii)$ This also follows from a direct calculation
\begin{align*}
\left|-b\s_1\cos^{-1}(z)+\s_2\f-\frac{\pi}{2}b\right|\geq \left(\frac{\pi}{2}-\cos^{-1}z\right)b- |\f|\geq \frac{\pi}{6}b-|\f|
\end{align*}
for $\frac{1}{2}\leq z\leq 1$. Taking $\f_1=c_0=\frac{\pi b}{12}$, we obtain the bound in (ii).
\end{proof}

\begin{proof}[Proof of Proposition \ref{I_1propimp}]
Define $\f_0:=\min\bigl(\bigl(1-\frac{b}{2}\bigr)\pi,\f_1\bigr)>0$, where $\f_1$ is as in Lemma~\ref{S_1implem}.
Taking $\chi\in C^{\infty}(\R;[0,1])$ such that $\chi(\m)=1$ for $\m\leq 1/2$ and $\chi(\m)=0$ for $\m\geq 3/4$ and setting~${\overline{\chi}=1-\chi}$, we write
\begin{align*}
I_{1,\s}(y,\f)&= \sum_{m=1}^{\infty}(\chi(\m(m)/y)+\overline{\chi}(\m(m)/y)) \z_{1,\s}(m,y,\f){\rm e}^{{\rm i}S_{1,\s}(m,y,\f)} \\
&=: I_{1,1,\s}(y,\f)+I_{1,2,\s}(y,\f).
\end{align*}
By the support property of $\overline{\chi}$ and Lemma \ref{ampesti}\,(ii), we have
\begin{align*}
|\pa_{m}^{\a}(\overline{\chi}(\m(m)/y)\z_{1,\s}(m,y,\f))|\lesssim y^{(2-b)\n-\frac{1}{3}-\frac{\a}{3}}.
\end{align*}
Moreover, by virtue of Lemma \ref{phaseest}\,(i) and Lemma \ref{S_1implem}\,(i),~(ii), the phase function $S_{1,\s}$ satisfies the assumption of Proposition~\ref{discsum1} with $r=\min(c_0,\e_0)$, $M\sim y$ and $\rho=\frac{1}{3}$. Applying Proposition~\ref{discsum1} with $k=(2-b)\n-\frac{1}{3}$, for each $N>0$, we obtain $|I_{1,2,\s}(y,\f)|\lesssim y^{-N}$. Thus, it remains to estimate~${I_{1,1,\s}(y,\f)}$.

We write $I_{1,1,\s}$ as in \eqref{I1sigmaint} and consider the integral
\begin{align*}
I_{q}=\int_{\R}\z(m,y,\f){\rm e}^{{\rm i}S_{1,\s}(m,y,\f)+2\pi {\rm i}qm}{\rm d}m, \qquad \text{for}\quad \z\in \{\z_{1,\s}',\z_{1,\s}''\},
\end{align*}
where we set
\begin{align*}
\z'_{1,\s}(m,y,\f)=&\chi(\m(m)/y)\z_{1,\s}(m,y,\f),\\
\z''_{1,\s}(m,y,\f)=&\pa_m\z_{1,\s}'(m,y,\f)+{\rm i}(\pa_mS_{1,\s})(m,y,\f)\z_{1,\s}'(m,y,\f).
\end{align*}
As in the proof of Proposition \ref{I_1prop}, it suffices to prove the existence of $\f_0>0$ such that
\begin{align}\label{I_qest2}
|I_q|\lesssim \begin{cases}
y^{(1-b)\n}& \text{for}\  q=0,\\
y^{(1-b)\n}(1+|q|)^{-1}& \text{for}\  |q|\geq 1,
\end{cases}
\end{align}
and for $m\geq 1$, $y\gg 1$ and $\f\in [0,\f_0]$.
By Lemmas \ref{phaseest}\,(i), \ref{ampesti}\,(ii) and the support property of~$\chi$, for $\z\in\{\z_{1,\s}',\z_{1,\s}''\}$, we have
\begin{align}\label{zetapart2symbolest}
|\pa_m^{\a}\z(m,y,\f)|\lesssim y^{-b\n-\frac{1}{2}}m^{2\n-\a}(1+m\sin \f)^{-\n}
\end{align}
and $\supp \z(\cdot,y,\f)\subset \big\{m\geq 1\mid \m(m)\in \bigl[\m(1),\frac{3}{4}y\bigr]\big\}$.

First, we consider the case $|q|\geq 1$. In this case, we have
\begin{align*}
|\pa_m(S_{1,\s}(m,y,\f)+2\pi qm)|\gtrsim (1+|q|)
\end{align*}
by Lemma \ref{S_1implem}\,(i).
By using integration by parts many times, for each $N>0$, we have
\begin{align*}
|I_q|\lesssim y^{-N}(1+|q|)^{-N}
\end{align*}
due to Lemma \ref{phaseest}\,(i) and the estimate \eqref{zetapart2symbolest}.
This proves the second estimates of \eqref{I_qest2}.

Next, we consider the case $q=0$. By the change of variable $\m(m)(=bm+b\n)=y\m$,
\[
I_0=b^{-1}{\rm e}^{-{\rm i}\left(\s_2\frac{\f}{b}-\frac{\pi}{2}\right)b\n} y^{(2-b)\n+\frac{1}{2}}\int_{\R}{\rm e}^{{\rm i}yS(\m,\f) }\c_{y,\f}(\m) {\rm d}\m,
\]
where we set
\begin{align*}
S(\m,\f)=\s_1h_1(\m) +\left(\s_2\frac{\f}{b}-\frac{\pi}{2}\right)\m,\qquad \c_{y,\f}(\m)=y^{(b-2)\n+\frac{1}{2}}\z\left(\frac{y\m}{b}-\n,y,\f\right).
\end{align*}
By \eqref{zetapart2symbolest}, the support property of $\chi$ and $\f\lesssim\sin \f$, we have
\begin{align*}
|\pa_{\m}^{\a}\c_{y,\f}(\m)|\lesssim \m^{2\n-\a}(1+y \f\m)^{-\n},\qquad \supp \c_{y,\f}\subset \left[\m(1)y^{-1},\frac{3}{4}\right].
\end{align*}
Moreover, the phase function $S(\m,\f)$ satisfies the assumption of Proposition \ref{Stphasemovecrit} with the critical point $\m(\f)=\cos\left(\s_1\left(-\frac{\pi}{2}+\s_2\frac{\f}{b}\right)\right)$.
Now Proposition \ref{Stphasemovecrit} with $\l=y$ implies
\begin{align*}
|I_0|\lesssim y^{(2-b)\n+\frac{1}{2}}\cdot y^{-\n-\frac{1}{2}}=y^{(1-b)\n}.
\end{align*}
This completes the proof.
\end{proof}

\section{Proof of the main theorem}\label{proofsection}

\subsection{Proof of Theorem \ref{keythm}}
\label{subsec:proof}

The claim (i) for $b=1$ directly follows from \cite[equation~(4.45)]{BKO}. Hence we may assume $b\neq 1$.

Let $R_0\gg 1$. First, we prove the theorem for $|y|\leq R_0$. To do this, we follow the argument in \cite[Lemma 4.17]{BKO}.
Using the bound $\big|\tilde{I}_{\l}(w)\big|\leq {\rm e}^{|\re w|}\Gamma(\l+1)^{-1}$, $C_0^{\n}(t)=1$ and $\big|\n^{-1}C_{m}^{\n}(t)\big|\lesssim (1+m)^{2\n-1}$ for $m\geq 1$ (see \cite[equation~(4.16)]{BKO}, \cite[Fact 4.8]{BKO} and \eqref{Gegenunif}), we have
\begin{align*}
|\sI(b,\nu;-{\rm i}y;\cos\f)|\lesssim{}& \sum_{m=0}^{\infty}(1+m)\cdot |y|^{bm}\cdot \frac{1}{\Gamma(bm+b\n+1)}\cdot (1+m)^{2\n-1}\\
={}&\sum_{m=0}^{\infty}\frac{(1+m)^{2\n}}{\Gamma(bm+b\n+1)}\cdot |y|^{bm},
\end{align*}
which is convergent and bounded by a constant independent of $y\in \R$ with $|y|\leq R_0$.

Now the estimate for $y\geq R_0$ follows from the decomposition \eqref{sIdecom} of $\sI$ and Propositions~\ref{Ryphiprop}, \ref{I_2prop}, \ref{I_3prop}, \ref{I_1prop}, \ref{I_1propimp}.

The case $y\leq -R_0$ is similarly dealt with due to the formula
\begin{align*}
\sI(b,\nu;{\rm i}y;\cos\f)=&L_{b,\n}{\rm e}^{{\rm i}b\n\pi}y^{-b\n}\sum_{m=0}^{\infty}{\rm e}^{\frac{\pi}{2}bm{\rm i}}J_{b(m+\n)}(y)(m+\nu)\n^{-1}C^{\nu}_m(\cos\f),
\end{align*}
which in turn follows from the identity $J_{\m}\bigl({\rm e}^{{\rm i}\pi}y\bigr)={\rm e}^{{\rm i}\m\pi}J_{\m}(y)$ and \eqref{sumbesell}.

\subsection[Proof of Theorem 1.1]{Proof of Theorem \ref{mainthmSt}}\label{subsec:Stpf}

Now we use the following general result due to Keel--Tao.

\begin{Theorem}[\cite{KT}]\label{KeelTao}
Let $X$ be a measure space and $\{U(t)\}_{t\in \R}$ be a bounded family of continuous linear operators on $L^2(X)$ such that there are $C>0$ and $\s>0$ such that
\[
\|U(t)U(s)^*\|_{L^1(X)\to L^{\infty}(X)}\leq C|t-s|^{-\s}
\]
 for all $t,s\in \R$ with $t\neq s$. Let $(p,q)\in [2,\infty]^2$ such that
\begin{align}\label{addmissible}
\frac{1}{p}+\frac{\s}{q}=\frac{\s}{2} \qquad \text{and}\qquad (p,q,\s)\neq (2,\infty,1).
\end{align}
Then there exists $C>0$ such that
\begin{gather*}
\|U(t)u\|_{L^p(\R; L^q(X))}\leq C\|u\|_{L^2(X)},\\
 \left\|\int_{-\infty}^tU(t)U(s)^*f(s){\rm d}s \right\|_{L^{p_1}(\R; L^{q_1}(X))} \leq C\|f\|_{L^{p_2^*}(\R; L^{q_2^*}(X))},
\end{gather*}
where $(p_1,q_1)$, $(p_2,q_2)$ satisfy \eqref{addmissible} and $r^*$ denotes the H\"older conjugate of $r$: $r^*=r/(r-1)$.

\end{Theorem}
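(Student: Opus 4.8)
This is the abstract Strichartz estimate of Keel and Tao, so the plan is to reproduce their argument. First I would carry out the standard $TT^*$ reduction: by duality in $(t,x)$ the homogeneous bound $\|U(t)u\|_{L^p(\R;L^q(X))}\lesssim\|u\|_{L^2(X)}$ is equivalent to the bilinear inequality
\[
\Bigl|\int_{\R}\!\int_{\R}\bigl\langle U(s)^*F(s),U(t)^*G(t)\bigr\rangle_{L^2(X)}\,{\rm d}s\,{\rm d}t\Bigr|\lesssim\|F\|_{L^{p^*}(\R;L^{q^*}(X))}\|G\|_{L^{p^*}(\R;L^{q^*}(X))},
\]
and the inhomogeneous estimate is equivalent to the same bound for the retarded form $T(F,G)=\iint_{s<t}\langle U(s)^*F(s),U(t)^*G(t)\rangle\,{\rm d}s\,{\rm d}t$; since the full double integral controls $T$, it is enough to bound $T(F,G)$ for all admissible pairs at once. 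Two inputs feed the rest: the energy bound $|T(F,G)|\lesssim\|F\|_{L^1_tL^2_x}\|G\|_{L^1_tL^2_x}$, coming from uniform $L^2$-boundedness of $U(t)$ and Cauchy--Schwarz (this is the degenerate endpoint $(p,q)=(\infty,2)$), and the interpolated dispersive bound $\|U(t)U(s)^*\|_{L^{q^*}\to L^q}\lesssim|t-s|^{-\s(1-2/q)}$, obtained by interpolating the hypothesis with $\|U(t)U(s)^*\|_{L^2\to L^2}\lesssim1$; on the admissible line $\s(1-2/q)=2/p$.

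For the non-endpoint range $p>2$ I would argue directly: Minkowski's inequality and the interpolated dispersive bound give
\[
\Bigl\|\int_{\R}U(t)U(s)^*F(s)\,{\rm d}s\Bigr\|_{L^q_x}\lesssim\int_{\R}|t-s|^{-2/p}\|F(s)\|_{L^{q^*}_x}\,{\rm d}s,
\]
and since $0<2/p<1$ the one-dimensional Hardy--Littlewood--Sobolev inequality maps the right-hand side from $L^{p^*}_t$ to $L^p_t$ (the kernel exponent $2/p$ is exactly the one required for $L^{p^*}_t\to L^p_t$). Combined with the $TT^*$ reduction this yields the homogeneous estimate, and the retarded/inhomogeneous version follows either in the same way or, for distinct admissible pairs with $p_2^*<p_1$, from the Christ--Kiselev lemma.

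The hard part is the double endpoint, where $p=2$ (so necessarily $\s>1$ and $q=2\s/(\s-1)<\infty$; the triple $(2,\infty,1)$ is genuinely excluded because there the dispersive decay $|t-s|^{-1}$ is borderline non-integrable and the estimate fails). Here summing the contributions term-by-term loses, and I would follow Keel--Tao's dyadic decomposition $T(F,G)=\sum_{j\in\Z}T_j(F,G)$ with $T_j$ localized to $|t-s|\sim2^j$. For each $j$, the dispersive bound plus Schur's test in time---now trivial since $t-s$ is pinned to a single scale---give a two-parameter family $|T_j(F,G)|\lesssim2^{-j\beta(a,b)}\|F\|_{L^2_tL^{a^*}_x}\|G\|_{L^2_tL^{b^*}_x}$ for $(1/a,1/b)$ near $(1/q,1/q)$, where $\beta$ vanishes exactly along the admissibility relation but has nonzero transverse gradient. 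The genuine obstacle is then the summation over $j$: the critical-line estimates ($\beta=0$) cannot be summed directly, so one must exploit the geometric decay of the off-diagonal estimates through a bilinear real-interpolation (atomic decomposition) argument in the spirit of Bourgain to recover the endpoint with no loss. Checking that the endpoint inhomogeneous estimate with two distinct admissible pairs also comes out of the same $T_j$ bounds finishes the proof.
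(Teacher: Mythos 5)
The paper does not prove this statement at all: it is quoted verbatim from Keel--Tao \cite{KT} and used as a black box, so there is no internal proof to compare against. Your outline is a faithful and essentially correct summary of the original Keel--Tao argument (the $TT^*$ reduction, the energy bound plus the interpolated dispersive bound $\|U(t)U(s)^*\|_{L^{q^*}\to L^q}\lesssim |t-s|^{-\s(1-2/q)}$ with $\s(1-2/q)=2/p$ on the admissible line, Hardy--Littlewood--Sobolev for $p>2$, and the dyadic decomposition in $|t-s|\sim 2^j$ with bilinear real interpolation at the endpoint $p=2$, $\s>1$). The only place you gloss is the retarded (inhomogeneous) estimate when one of the two admissible pairs is the endpoint pair: Christ--Kiselev only handles $p_2^*<p_1$, and extracting the remaining cases from the $T_j$ bounds requires the additional interpolation argument of \cite[Section~6]{KT} rather than being automatic, but this is exactly what the cited reference supplies.
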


First we note that the operator norm of an operator from $L^1$ to $L^{\infty}$ is equal to the $L^{\infty}$-norm of its integral kernel.
We may assume $T\leq \frac{\pi}{4}$ due to the argument in Appendix \ref{finitetime}.

First, we consider the cases
\begin{itemize}\itemsep=0pt
\item $n=1$ and $a\ge 2-4k$,
\item $n\ge 2$ and $(0<a\leq 1$ or $a=2)$.
\end{itemize}
We claim
\begin{align}\label{Hkadispersive}
\big|{\rm e}^{-{\rm i}tH_{k,a}}(x,x')\big|\lesssim |t|^{-\s_{k,a}}\qquad \text{for}\quad |t|\leq \frac{\pi}{2}.
\end{align}
Let us prove the claim for the case $n=1$ and $a\ge 2-4k$.
In this case, we have $\s_{k,a}\ge\frac{1}{2}$ and~\cite[Proposition 4.29]{BKO} implies
\begin{align}
{\rm e}^{-{\rm i}tH_{k,a}}(x,x')={}&\Gamma(\sigma_{k,a})\frac{{\rm e}^{{\rm i}\frac{|x|^a+|x'|^a}{a}\cot(t)}}{({\rm i}\sin(t))^{\s_{k,a}}}\nonumber\\
&\times\left(\tilde{I}_{\s_{k,a}-1}\left(\frac{2|xx'|^{\frac{2}{a}}}{a{\rm i}\sin(t)}\right)+\frac{xx'}{(a{\rm i}\sin(t))^{\frac{2}{a}}}\tilde{I}_{\s_{k,a}-1+\frac{2}{a}}\left(\frac{2|xx'|^{\frac{2}{a}}}{a{\rm i}\sin(t)}\right)\right).\label{1dim}
\end{align}
From the asymptotic expansion of the I-Bessel function \cite[Section~7.23\,(2), (3)]{W}, we see
\begin{align}\label{1dim2}
I_{\n}(\pm {\rm i}y)=\frac{{\rm e}^{\pm {\rm i}y}}{(\pm2\pi {\rm i}y)^{1/2}}\bigl(1+O\bigl(y^{-1}\bigr)\bigr)
\end{align}
as $y\to\infty$.
Hence \eqref{1dim2} and $\tilde{I}_{\n}(z)=(\frac{z}{2})^{-\n}I_{\n}(z)$ imply that there exists $C>0$ such that \smash{$\big|\tilde{I}_{\s_{k,a}-1}({\rm i}y)\big|+\big|y^{\frac{2}{a}}\tilde{I}_{\s_{k,a}-1+\frac{2}{a}}({\rm i}y)\big|\le C(1+|y|)^{-\s_{k,a}+\frac{1}{2}}\le C$}
for any $y\in\R$.
Therefore, our claim \eqref{Hkadispersive} follows from \eqref{1dim}.

Let us consider the case $n\ge 2$ and $(0<a\leq 1$ or $a=2)$.
Since ${\rm d}\m^k_{\hat{x}}$ is a probability measure, we see
\begin{align*}
\left\|\int_{\R^n}f\bigl(\x\cdot \hat{x'}\bigr) {\rm d}\m_{\hat{x}}^k(\x)\right\|_{L^{\infty}(\R^n\times \R^n)}\leq \|f\|_{L^{\infty}([-1,1])}.
\end{align*}
Using this estimate and \eqref{intkergen}, we obtain
\begin{align*}
\big|{\rm e}^{-{\rm i}tH_{k,a}}(x,x')\big|\leq& c_{k,a}\frac{1}{|\sin (t)|^{\s_{k,a}}} \sup_{\y\in [-1,1]}\bigg|\sI\bigg(\frac{2}{a},\frac{a}{2}(\s_{k,a}-1); -{\rm i}\frac{2|x|^{\frac{a}{2}}|x'|^{\frac{a}{2}} }{a\sin(t)}; \y\bigg)\bigg|.
\end{align*}
Therefore, the claim \eqref{Hkadispersive} follows from Theorem \ref{keythm}\,(ii).

By the claim and Theorem \ref{KeelTao} with $U_{\pm}(t):=1_{[0,T]}(t){\rm e}^{\mp {\rm i}tH_{k,a}}$, we obtain the homogeneous Strichartz estimates \eqref{homStr} and
\begin{align*}
&\left\|\int_{0}^t{\rm e}^{-{\rm i}(t-s)H_{k,a}}f(s){\rm d}s \right\|_{L^{p_1}([0,T]; L^{q_1})} \leq C\|f\|_{L^{p_2^*}([0,T]; L^{q_2^*})},\\
&\left\|\int_{0}^t{\rm e}^{{\rm i}(t-s)H_{k,a}}f(-s){\rm d}s \right\|_{L^{p_1}([0,T]; L^{q_1})} \leq C\|f\|_{L^{p_2^*}([-T,0]; L^{q_2^*})}.
\end{align*}
Since
\[
\left\|\int_{0}^t{\rm e}^{{\rm i}(t-s)H_{k,a}}f(-s){\rm d}s \right\|_{L^{p_1}([0,T]; L^{q_1})}=\left\|\int_{0}^t{\rm e}^{-{\rm i}(t-s)H_{k,a}}f(s){\rm d}s \right\|_{L^{p_1}([-T,0]; L^{q_1})},
\] we also obtain the inhomogeneous Strichartz estimates \eqref{inhomStr}.

Next, we consider the case when $1<a<2$ and $k\equiv 0$. Let $0<\f_0<2\pi$.
Take a finite partition of unity $\{\chi_j\}_{j=1}^N\subset C^{\infty}\bigl(\mathbb{S}^{n-1};[0,1]\bigr)$ on the sphere such that
$
\omega,\y\in \supp \chi_j\Rightarrow \omega\cdot \y\geq \cos\f_0$.
Due to \eqref{intker0} and Theorem \ref{keythm}\,(i), there exists $C>0$ such that $\big|\chi_j(\hat{x}){\rm e}^{- {\rm i}tH_{k,a}}(x,x')\chi_j\bigl(\hat{x'}\bigr)\big|\leq C|t|^{-\s_{0,a}}$ for $|t|\leq \frac{\pi}{2}$ and $x,x'\in \R^n\setminus \{0\}$, where we recall $\hat{x}=x/|x|$. Hence,
\begin{align*}
\big\|\chi_j(\hat{x}){\rm e}^{- {\rm i}(t-s)H_{k,a}}(x,x')\chi_j\bigl(\hat{x'}\bigr)\big\|_{L^{\infty}(\R^n_x\times \R^n_{x'})}\leq C|t-s|^{-\s_{0,a}}
\end{align*}
for $|t|, |s|\leq \frac{\pi}{4}$ with $t\neq s$.
Applying Theorem \ref{KeelTao} with $U(t):=1_{[-T,T]}(t)\chi_j(\hat{x}){\rm e}^{- {\rm i}tH_{k,a}}$, we obtain
\smash{$
\big\|\chi_j(\hat{x}) {\rm e}^{-{\rm i}tH_{k,a}}u\big\|_{L^p([-T,T]; L^q)}\leq C\|u\|_{L^2}$}.
Since the number $N$ is finite and since $\sum_{j=1}^N\chi_j=1$, we obtain the homogeneous Strichartz estimates \eqref{homStr}.

The inhomogeneous Strichartz estimates \eqref{inhomStr} when $1<a<2$ and $k\equiv 0$ follow from the Christ--Kiselev lemma \cite{CK} and a complex interpolation since the end-point case $(p_j,q_j)=(2,\frac{2\s_{0,a}}{\s_{0,a}-1})$ is excluded (see the proof of \cite[Theorem 6]{BT}).

\subsection[Proof of Theorem 1.3]{Proof of Theorem \ref{coroSt}}
Let us define elements of $\sl(2,\R)$ as follows
$
\bfe^+:=\left(\begin{smallmatrix}0&1\\0&0\end{smallmatrix}\right)$, $
\bfh:=\left(\begin{smallmatrix}1&0\\0&-1\end{smallmatrix}\right)$, $
\bfe^-:=\left(\begin{smallmatrix}0&0\\1&0\end{smallmatrix}\right)$.
Assume~${\sigma_{k,a}>0}$.
Then it is proved in \cite[Theorem 3.30]{BKO} that there exists a unitary representation~$\Omega_{k,a}$ of the universal cover \smash{$\widetilde{\SL}(2,\R)$} of $\SL(2,\R)$ on the Hilbert space $L^2(\R^n,\vartheta_{k,a}(x){\rm d}x)$ (see~\eqref{vartheta} for the definition of $\vartheta_{k,a}$) satisfying
\begin{alignat*}{3}
& \Omega_{k,a}\bigl(\exp\bigl(t\bfe^+\bigr)\bigr)u(x)={\rm e}^{\frac{{\rm i}t|x|^a}{a}}u(x),\qquad&& \Omega_{k,a}(\exp(t\mathbf{h}))u(x)={\rm e}^{t\sigma_{k,a}}u\bigl({\rm e}^{\frac{2t}{a}}x\bigr),& \\
& \Omega_{k,a}(\exp(t\bfe^-))u(x)={\rm e}^{\frac{{\rm i}t|x|^{2-a}\Delta_k}{a}}u(x),\qquad&&\Omega_{k,a}(\exp(t(\bfe^--\bfe^+)))u(x)={\rm e}^{-{\rm i}tH_{k,a}}u(x)&
\end{alignat*}
for any $u\in L^2(\R^n,\vartheta_{k,a}(x){\rm d}x)$.
Let $\theta=\arctan(t)$. We apply $\Omega_{k,a}$ to the identity
\[
\exp\bigl(\theta\bigl(\bfe^--\bfe^+\bigr)\bigr)=\exp\bigl(-t\bfe^+\bigr)\exp\left(\frac{\log\bigl(1+t^2\bigr)}{2}\bfh\right)\exp(t\bfe^-)
\]
in \smash{$\widetilde{\SL}(2,\R)$}, and we obtain
\begin{align}\label{SL2}
{\rm e}^{-{\rm i}\theta H_{k,a}}u(x)
={\rm e}^{\frac{-{\rm i}t|x|^a}{a}}\bigl(1+t^2\bigr)^{\frac{\sigma_{k,a}}{2}}\left({\rm e}^{\frac{{\rm i}t|x|^{2-a}\Delta_k}{a}}u\right)\bigl(\bigl(1+t^2\bigr)^{1/a}x\bigr)
\end{align}
for any $u\in L^2(\R^n,\vartheta_{k,a}(x){\rm d}x)$.

\begin{proof}[Proof of Theorem \ref{coroSt}]
From \eqref{SL2}, the integral kernel \smash{${\rm e}^{\frac{{\rm i}t|x|^{a-2}\Delta_k}{a}}(x,y)$} of \smash{${\rm e}^{\frac{{\rm i}t|x|^{a-2}\Delta_k}{a}}$} equals
\begin{align*}
{\rm e}^{\frac{{\rm i}t|x|^a}{(1+t^2)a}}\bigl(1+t^2\bigr)^{\frac{-\sigma_{k,a}}{2}}{\rm e}^{-{\rm i}\arctan(t)H_{k,a}}\bigl(\bigl(1+t^2\bigr)^{\frac{-1}{a}}x,y\bigr).
\end{align*}
Therefore, the similar arguments as the proof of Theorem \ref{mainthmSt} and the equation
\[
\bigl(1+t^2\bigr)^{\frac{1}{2}}\sin(\arctan(t))=t
\]
 imply
\smash{$\big|{\rm e}^{\frac{{\rm i}t|x|^{a-2}\Delta_k}{a}}(x,y)\big|\lesssim |t|^{-\sigma_{k,a}}$} for $t\in\R$, and Theorem \ref{coroSt} follows.
\end{proof}

\appendix

\section{Asymptotic behavior of special functions}

\subsection{Leibniz's rule}

We frequently use the following formula, which directly follows from Leibniz's rule.

\begin{Lemma}\label{Leibnizrule}
Let $r$ be a smooth function and $N\in\mathbb{N}\setminus\{0\}$. Then there exist smooth functions~$b_{jN}$ such that
\begin{gather*}
\bigl(\pa_x\circ r(x)^{-1}\bigr)^N=\frac{1}{r(x)^N}\!\sum_{j=0}^Nb_{jN}(x)\pa_{x}^j\qquad \text{with}\quad |b_{jN}(x)|\lesssim \sum_{k=1}^{N-j}\sum_{\substack{\ell_1+\cdots+\ell_k=N-j,\\ \ell_1,\hdots,\ell_k\geq 1}}\prod_{i=1}^k\!\frac{\big|r^{(\ell_i)}(x)\big|}{|r(x)|}.
\end{gather*}
\end{Lemma}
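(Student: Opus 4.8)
The plan is to induct on $N$. For $N=1$, a direct computation gives $\bigl(\pa_x\circ r(x)^{-1}\bigr)f=\pa_x\bigl(r^{-1}f\bigr)=r^{-1}\bigl(f'-(r'/r)f\bigr)$, so the formula holds with $b_{01}=-r'/r$ and $b_{11}=1$, and the claimed estimate is immediate.

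For the inductive step I would write $\bigl(\pa_x\circ r^{-1}\bigr)^{N+1}=\bigl(\pa_x\circ r^{-1}\bigr)\circ\bigl(\pa_x\circ r^{-1}\bigr)^N$, insert the inductive formula $\bigl(\pa_x\circ r^{-1}\bigr)^N=r^{-N}\sum_{j=0}^N b_{jN}\pa_x^j$, and expand by Leibniz's rule. The only thing to observe is that $\pa_x\bigl(r^{-(N+1)}\bigr)=r^{-(N+1)}\cdot\bigl(-(N+1)r'/r\bigr)$, so differentiating the coefficient $r^{-(N+1)}b_{jN}$ keeps the power $r^{-(N+1)}$ at the cost of a factor $r'/r$. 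Collecting terms and reindexing the sum $\sum_j b_{jN}\pa_x^{j+1}$ gives the recursion
\[
b_{j,N+1}=b_{j-1,N}+\pa_x b_{jN}-(N+1)\frac{r'}{r}\,b_{jN},
\]
with the convention $b_{jN}=0$ for $j<0$ or $j>N$; in particular $b_{N+1,N+1}=b_{N,N}=1$, which is consistent with the empty product ($k=0$) in the claimed bound.

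To obtain the estimate I would strengthen the inductive hypothesis: each $b_{jN}$ is a finite linear combination, with coefficients depending only on $j$ and $N$, of products $\prod_{i=1}^k \frac{r^{(\ell_i)}}{r}$ with $k\ge 1$, $\ell_i\ge 1$ and $\ell_1+\cdots+\ell_k=N-j$ (plus the constant $1$ when $j=N$). Each of the three terms in the recursion preserves this structure: $b_{j-1,N}$ already has weight $N-(j-1)=(N+1)-j$; the term $\frac{r'}{r}b_{jN}$ has weight $1+(N-j)=(N+1)-j$ with one additional factor; and $\pa_x b_{jN}$ is controlled by the identity $\pa_x\frac{r^{(\ell)}}{r}=\frac{r^{(\ell+1)}}{r}-\frac{r^{(\ell)}}{r}\cdot\frac{r'}{r}$, so differentiating a product of such factors yields a sum of products whose weight is raised by one and whose number of factors is unchanged or increased by one, hence at most $(N+1)-j$. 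The bound then follows from the triangle inequality applied to this finite linear combination.

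The work here is pure bookkeeping — tracking the weighted-polynomial structure and the number of factors through the recursion — and the only mildly delicate point is the endpoint $j=N$, where the double sum in the statement degenerates and the coefficient is simply the constant $1$.
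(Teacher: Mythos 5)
Your induction is correct, and it is essentially the argument the paper has in mind: the paper states the lemma as following directly from Leibniz's rule and records the precise expansion in the accompanying Remark, which is exactly what your recursion $b_{j,N+1}=b_{j-1,N}+\pa_x b_{jN}-(N+1)\frac{r'}{r}b_{jN}$ together with the weight bookkeeping (via $\pa_x\frac{r^{(\ell)}}{r}=\frac{r^{(\ell+1)}}{r}-\frac{r^{(\ell)}}{r}\cdot\frac{r'}{r}$) produces. Your observation about the endpoint $j=N$, where the coefficient is the constant $1$ and the stated double sum is vacuous, matches the paper's Remark, which separates the leading term $\frac{1}{r(x)^N}\pa_x^N$ from the bounded lower-order coefficients.
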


\begin{Remark}
More precisely, we have
\begin{align*}
\bigl(\pa_x\circ r(x)^{-1}\bigr)^N=\frac{1}{r(x)^N}\pa_x^N+\frac{1}{r(x)^N}\sum_{j=0}^{N-1}\Bigg(\sum_{k=1}^{N-j}\sum_{\substack{\ell_1+\cdots+\ell_k=N-j,\\ \ell_1,\hdots,\ell_k\geq 1}}C_{\ell_1\ell_2\hdots \ell_k}^{jN}\prod_{i=1}^{k}\frac{r^{(\ell_i)}(x)}{r(x)} \Bigg)\pa_{x}^j
\end{align*}
with constants \smash{$C_{\ell_1\ell_2\hdots \ell_m}^{jN}>0$}.
\end{Remark}

\subsection{Non-stationary phase theorem with a singular amplitude}

The following lemma is more or less well known and an easy consequence of integration by parts. We give a proof for completeness of the paper.

\begin{Lemma}\label{singularasym}
Let $\m>-1$, $\c\in C^{\infty}((0,\infty))$ which is supported in $x\leq 10$ such that for $\a\in\mathbb{N}$, we have $|\pa_x^{\a}\c(x)|\lesssim |x|^{\m-\a}$ for $x\in (0,\infty)$. Let $f\in C^{\infty}(\R)$ satisfy $\im f(x)\geq 0$, $f'(x)\neq 0$ for~${x\in \supp \c}$ and $f(0)\in \R$. We define
\begin{align*}
b_{\m}(\l):={\rm e}^{-{\rm i}\l f(0)}\int_{0}^{\infty}\c(x){\rm e}^{{\rm i}\l f(x)}{\rm d}x\qquad \text{for}\quad \l\gtrsim 1.
\end{align*}
Then for each $\a\in\mathbb{N}$
\begin{align}\label{singbmues}
|\partial_{\l}^{\a}b_{\m}(\l)|\lesssim \l^{-\m-1-\a},\qquad \l\gtrsim 1.
\end{align}
In particular, $\big|\int_{0}^{\infty}\c(x){\rm e}^{{\rm i}\l f(x)}{\rm d}x\big|\lesssim \l^{-\m-1}$ for $\l\gtrsim 1$.
\end{Lemma}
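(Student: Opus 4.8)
The plan is to reduce to the case $\a=0$ and then estimate the oscillatory integral $\int_0^\infty \c(x){\rm e}^{{\rm i}\l f(x)}{\rm d}x$ by splitting the region of integration into $\{0<x\lesssim\l^{-1}\}$, where a crude bound suffices, and $\{x\gtrsim\l^{-1}\}$, where I would run a dyadic non-stationary phase argument. For the reduction, note that since $\m>-1$ the amplitude $\c$ is integrable near $0$ and $f-f(0)$ is bounded on $\supp\c$, so one may differentiate under the integral sign to get $\pa_\l^\a b_\m(\l)={\rm e}^{-{\rm i}\l f(0)}\int_0^\infty \c(x)\bigl({\rm i}(f(x)-f(0))\bigr)^\a{\rm e}^{{\rm i}\l f(x)}{\rm d}x$. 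Because $f\in C^\infty(\R)$, the factor $(f-f(0))^\a$ vanishes to order $\a$ at $0$, so by Leibniz's rule the new amplitude $\c(x)({\rm i}(f(x)-f(0)))^\a$ again satisfies $|\pa_x^\b(\cdot)|\lesssim|x|^{(\m+\a)-\b}$ on $\supp\c\subset(0,10]$, i.e.\ it is an amplitude of the same type with $\m$ replaced by $\m+\a>-1$. Using $|{\rm e}^{-{\rm i}\l f(0)}|=1$ (as $f(0)\in\R$), it therefore suffices to prove $\bigl|\int_0^\infty \c(x){\rm e}^{{\rm i}\l f(x)}{\rm d}x\bigr|\lesssim\l^{-\m-1}$ for $\l\gtrsim1$; applying this with $\m+\a$ in place of $\m$ then gives \eqref{singbmues} for all $\a$.

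For the trivial region, since $\im f\ge0$ gives $|{\rm e}^{{\rm i}\l f}|\le1$, the part of the integral over $0<x\le\l^{-1}$ is bounded by $\int_0^{\l^{-1}}|x|^\m{\rm d}x\lesssim\l^{-\m-1}$, using $\m>-1$. For $x\gtrsim\l^{-1}$ I would take a dyadic partition $\sum_{j\ge0}\psi(2^{-j}\l x)=1$ for $\l x\ge1$ with $\psi\in C_c^\infty\bigl((\tfrac12,2)\bigr)$, set $R_j:=2^j\l^{-1}$ and $\c_j(x):=\c(x)\psi(2^{-j}\l x)$, so that $\c$ equals a bump supported in $x\lesssim\l^{-1}$ plus $\sum_j\c_j$, where only the finitely many $j$ with $R_j\lesssim10$ contribute. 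On the $j$-th annulus $x\sim R_j$, rescaling $x=R_j y$ turns the corresponding piece into $R_j\int\tilde\c_j(y){\rm e}^{{\rm i}\Phi_j(y)}{\rm d}y$, where $\tilde\c_j(y)=\c_j(R_j y)$ is smooth and compactly supported in $y\sim1$ with $|\pa_y^\b\tilde\c_j|\lesssim R_j^\m$, and $\Phi_j(y)=\l f(R_j y)$ satisfies $|\pa_y\Phi_j|=\l R_j|f'(R_j y)|\sim2^j$ (using that $f'\ne0$ on $\supp\c$, which upgrades to $|f'|\sim1$ there by continuity) and $|\pa_y^{k+1}\Phi_j|=\l R_j^{k+1}|f^{(k+1)}(R_j y)|\lesssim2^j\lesssim|\pa_y\Phi_j|$ for $k\ge1$ since $R_j\lesssim10$.

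Now I would integrate by parts $N$ times with the operator $\pa_y\circ(\pa_y\Phi_j)^{-1}$: by Lemma \ref{Leibnizrule}, all the ratios $|\pa_y^{k+1}\Phi_j|/|\pa_y\Phi_j|$ being $O(1)$, the resulting coefficients are uniformly bounded, so $\bigl|\int\tilde\c_j{\rm e}^{{\rm i}\Phi_j}{\rm d}y\bigr|\lesssim|\pa_y\Phi_j|^{-N}\sup_{l\le N}|\pa_y^l\tilde\c_j|\lesssim2^{-jN}R_j^\m$ (there are no boundary terms, and $|{\rm e}^{{\rm i}\Phi_j}|\le1$). Hence the $j$-th annulus contributes $\lesssim R_j^{\m+1}2^{-jN}=\l^{-\m-1}\,2^{j(\m+1-N)}$. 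Choosing $N>\m+1$ and summing the geometric series over $j\ge0$ bounds the region $x\gtrsim\l^{-1}$ by $\lesssim\l^{-\m-1}$, which together with the trivial region completes the proof, and then the reduction step of the first paragraph upgrades this to the derivative bounds \eqref{singbmues}.

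The argument is routine in outline; the one point needing care is keeping the symbol-type estimates for the singular amplitude scale-invariant across all dyadic scales and verifying that the constants produced by Lemma \ref{Leibnizrule} in the integration by parts are genuinely uniform in both $j$ and $\l$ — this is precisely what the hypotheses $|\pa_x^\a\c(x)|\lesssim|x|^{\m-\a}$ and the boundedness of the derivatives of $f$ together with $|f'|\gtrsim1$ on $\supp\c$ are for. (One should also observe that $f'(0)\ne0$ is implicit in ``$f'\ne0$ on $\supp\c$'' when $0$ is a limit point of $\supp\c$, since otherwise a stationary-phase contribution at the origin would destroy the rate $\l^{-\m-1}$.)
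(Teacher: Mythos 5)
Your proposal is correct and follows essentially the same route as the paper: the same reduction of the $\l$-derivatives to an amplitude of type $\m+\a$ by absorbing $({\rm i}(f-f(0)))^{\a}$ (with $|{\rm e}^{-{\rm i}\l f(0)}|=1$), the same trivial bound on $0<x\lesssim\l^{-1}$ using $\im f\geq 0$ and $\m>-1$, and non-stationary-phase integration by parts on $x\gtrsim\l^{-1}$ exploiting $|f'|\gtrsim 1$ on $\supp\c$ together with the symbol estimates and Lemma \ref{Leibnizrule}. The only cosmetic difference is that you organize the outer region dyadically and rescale each annulus before integrating by parts, whereas the paper integrates by parts once globally with $L=\pa_x\circ({\rm i}f'(x))^{-1}$, bounds $\big|L^N\bigl(\c\overline{\chi_\l}\bigr)\big|\lesssim |x|^{\m-N}$, and then integrates $\l^{-N}\int_{1/(2\l)}^{10}x^{\m-N}{\rm d}x\lesssim\l^{-\m-1}$ directly; the two computations are equivalent.
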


\begin{proof}
We only need to prove these estimates for sufficiently large $\l$.

First, we prove \eqref{singbmues} for $\a=0$. Let $\chi\in C_c^{\infty}(\R)$ such that $\chi(x)=1$ for $|x|\leq 1/2$ and~${\chi(x)=0}$ for $|x|\geq 1$. Define $\chi_{\l}(x)=\chi(\l x)$ and $\overline{\chi_\l}(x)=1-\chi_{\l}(x)$. Then we have
\begin{align}\label{singbmues1}
\left|\int_{0}^{\infty}\c(x)\chi_{\l}(x){\rm e}^{{\rm i}\l f(x)}{\rm d}x\right|\lesssim \int_{0}^{1/\l}x^{\m}{\rm d}x\lesssim {\l}^{-\m-1}.
\end{align}
On the other hand, the integration by parts yields
\begin{align*}
\left|\int_{0}^{\infty}\c(x)\overline{\chi_{\l}}(x){\rm e}^{{\rm i}\l f(x)}{\rm d}x\right|=\l^{-N}\left|\int_{0}^{\infty}L^N(\c(x)\overline{\chi_{\l}}(x)){\rm e}^{{\rm i}\l f(x)}{\rm d}x\right|,
\end{align*}
where we define $L=\pa_x\circ ({\rm i}f'(x))^{-1}$. By using Leibniz's rule, Lemma \ref{Leibnizrule} and the assumption $f'\neq 0$ on $\supp \c$, we obtain $|L^{N}(\c(x)\overline{\chi_\l}(x))|\lesssim |x|^{\m-N}$. Hence, for $N>\m+1$,
\begin{align}\label{singbmues2}
\left|\int_{0}^{\infty}\c(x)\overline{\chi_{\l}}(x){\rm e}^{{\rm i}\l f(x)}{\rm d}x\right|\lesssim \l^{-N}\int_{\frac{1}{2\l}}^{10}x^{\m-N}{\rm d}x \lesssim \l^{-\m-1}.
\end{align}
The inequalities \eqref{singbmues1} and \eqref{singbmues2} imply \eqref{singbmues} for $\a=0$.

To deal with the case $\a\geq 1$, we write
\begin{align*}
\pa_{\l}^{\a}b_{\m}(\l)={\rm i}^{\a}{\rm e}^{-{\rm i}\l f(0)}\int_0^{\infty}(f(x)-f(0))^{\a}\c(x){\rm e}^{{\rm i}\l f(x)}{\rm d}x.
\end{align*}
By Taylor's theorem, we have $\pa_x^{N}(\c(x)(f(x)-f(0))^{\a})=O\bigl(|x|^{\m+\a-N}\bigr)$ for $x\in \supp \c$. Moreover, $\big|{\rm e}^{-{\rm i}\l f(0)}\big|=1$ due to the assumption $f(0)\in \R$.
Hence the same argument as the case $\a=0$ gives \eqref{singbmues} for $\a\geq 1$.
\end{proof}

\subsection{Stationary phase theorem}

In this paper, we use the following versions of the stationary phase theorem (the van der Corput lemma). The proof is same as the proof of \cite[Lemma 1.1.2]{So} (see also in \cite[p.~334]{S}). The statement about uniformity in a parameter follows from its proof and Lemma \ref{Leibnizrule}. We consider an integral
\begin{align*}
I(\l):=\int_{\R}\c(x){\rm e}^{{\rm i}\l f(x)}{\rm d}x\qquad \text{for}\quad \l\gtrsim 1.
\end{align*}

\begin{Lemma}\label{stphaselem}
Let $k\geq 2$ be an integer and $j\geq 0$, $x_0\in \R$, $\c\in C^{\infty}(\R\setminus \{x_0\})\cap C_c(\R)$ and~${f\in C^{\infty}(\R;\R)}$ such that $|\pa_x^{\a}\c(x)|\lesssim |x-x_0|^{j-\a}$, $|f(x)|\lesssim |x-x_0|^k$ and $|f'(x)|\gtrsim |x-x_0|^{k-1}$ for $x\in \supp \c\setminus \{x_0\}$.
Then \smash{$|I(\l)|\lesssim \l^{-\frac{j+1}{k}}$} for $\l\gtrsim 1$.
\end{Lemma}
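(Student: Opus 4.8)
The plan is to prove this classical van der Corput estimate by a dyadic decomposition around $x_0$, trading a crude volume bound on the finest scale against repeated integration by parts on the coarser scales.

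After translating I would assume $x_0=0$, and $0\in\supp\chi$ (otherwise $\chi$ is smooth on $\supp\chi$, $|f'|$ is bounded below, and repeated integration by parts already gives $O(\lambda^{-N})$). I would then localize: choosing $\rho\in C_c^{\infty}(\R)$ with $\rho\equiv1$ near $0$ and $\supp\rho\subset(-\e,\e)$ for some $\e<\tfrac12$, the piece $(1-\rho)\chi$ is smooth with bounded derivatives on its support and there $|f'(x)|\gtrsim\e^{k-1}>0$, so integrating by parts with $L=(i\lambda f')^{-1}\pa_x$ repeatedly (using Lemma~\ref{Leibnizrule}) gives $\bigl|\int(1-\rho)\chi\,e^{i\lambda f}\,dx\bigr|\lesssim_N\lambda^{-N}$; hence we may assume $\supp\chi\subset(-\e,\e)$. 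At this point I would record the only structural consequence of the hypotheses that is used below: since $f\in C^{\infty}$ and $|f(x)|\lesssim|x|^k$ for $x\in\supp\chi$ with $0$ a limit point of $\supp\chi$, the degree-$(k-1)$ Taylor polynomial of $f$ at $0$ must vanish, so $f$ vanishes to order $k$ at $0$ and $|f^{(m)}(x)|\lesssim\max(|x|^{k-m},1)$ on $\supp\chi$ for every $m$.

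Next I would split $I(\lambda)=I_0(\lambda)+\sum_{\ell}I_\ell(\lambda)$. Fixing a Littlewood--Paley family $\phi\in C_c^{\infty}(\R)$ supported in $\{\tfrac12\le|x|\le2\}$ with $\sum_{\ell\in\Z}\phi(2^\ell x)=1$ for $x\ne0$ and an index $\ell_0$ with $2^{-\ell_0}\sim\lambda^{-1/k}$, the term $I_0$ collects $\chi(x)\sum_{\ell\ge\ell_0}\phi(2^\ell x)$, which is supported in $|x|\lesssim\lambda^{-1/k}$, so using $|\chi(x)|\lesssim|x|^j$ and $j\ge0>-1$,
\[
|I_0(\lambda)|\le\int_{|x|\lesssim\lambda^{-1/k}}|\chi(x)|\,dx\lesssim\int_0^{\lambda^{-1/k}}x^j\,dx\lesssim\lambda^{-(j+1)/k}.
\]
For each of the finitely many remaining scales ($\ell_{\min}\le\ell<\ell_0$, with $\ell_{\min}$ fixed by $\supp\chi$), I would write $I_\ell(\lambda)=\int\chi(x)\phi(2^\ell x)e^{i\lambda f(x)}\,dx$ and rescale $x=2^{-\ell}y$ to get $I_\ell(\lambda)=2^{-\ell}\int a_\ell(y)e^{i\lambda g_\ell(y)}\,dy$ with $a_\ell(y)=\chi(2^{-\ell}y)\phi(y)$ and $g_\ell(y)=f(2^{-\ell}y)$. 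On $\supp a_\ell$, where $|y|\sim1$, the hypotheses together with the vanishing-order observation and $2^{-\ell}<1$ give, uniformly in $\ell$ and $\lambda$,
\[
|\pa_y^{\alpha}a_\ell(y)|\lesssim 2^{-\ell j},\qquad |\pa_y g_\ell(y)|\gtrsim 2^{-\ell k},\qquad |\pa_y^{m}g_\ell(y)|\lesssim 2^{-\ell k}\quad(m\ge2).
\]

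Finally, integrating by parts $N$ times in $y$ with $L=(i\lambda g_\ell')^{-1}\pa_y$ and using Lemma~\ref{Leibnizrule} with the bounds above — each factor of $L$ costing $(\lambda 2^{-\ell k})^{-1}$ — I would obtain
\[
|I_\ell(\lambda)|\lesssim 2^{-\ell}\cdot 2^{-\ell j}\cdot(\lambda 2^{-\ell k})^{-N}=\lambda^{-N}2^{\ell(kN-j-1)}.
\]
Choosing an integer $N$ with $kN>j+1$ (possible since $k\ge2$), the sum $\sum_{\ell\le\ell_0}\lambda^{-N}2^{\ell(kN-j-1)}$ is geometric with ratio $>1$ and hence dominated by its top term at $2^{-\ell}\sim\lambda^{-1/k}$, giving $\lambda^{-N}\cdot\lambda^{(kN-j-1)/k}=\lambda^{-(j+1)/k}$. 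Combined with the bound for $I_0$ this yields $|I(\lambda)|\lesssim\lambda^{-(j+1)/k}$, and uniformity with respect to an external parameter is automatic since every implicit constant above depends only on the implicit constants in the hypotheses, on $k$ and $j$, and on $\|\phi\|_{C^N}$. I expect no substantive analytic difficulty here — this is the classical van der Corput lemma with a singular amplitude, as signalled by the reference to \cite[Lemma 1.1.2]{So} — so the main work is organizational: verifying that the rescaled symbol and phase estimates are genuinely uniform in the dyadic index $\ell$, and checking that the $\ell$-summation is balanced so that the small-scale volume bound and the large-scale oscillatory gain both produce exactly $\lambda^{-(j+1)/k}$.
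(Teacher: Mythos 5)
Your argument is correct, and it takes a genuinely different route from the paper, which in fact offers no proof of Lemma \ref{stphaselem}: it only asserts that the proof is the same as that of the classical van der Corput lemma \cite[Lemma~1.1.2]{So} (see also \cite[p.~334]{S}), with uniformity in parameters coming from Lemma \ref{Leibnizrule}. The argument behind that citation splits once at the scale $|x-x_0|\sim\l^{-1/k}$ and integrates by parts on the complement; your dyadic decomposition instead works on annuli $|x-x_0|\sim 2^{-\ell}$, rescales each to unit size, and wins there by $N$-fold non-stationary phase. This buys two things: it handles every $j\geq 0$ uniformly (for large $j$ a single integration by parts on the far region is not enough, so the one-split argument needs iteration anyway), and it makes the dependence of all constants on the hypotheses and on $\|\phi\|_{C^N}$ explicit, which is exactly the uniformity the paper needs. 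You also correctly supply the one structural fact that the bare citation glosses over: the hypotheses here bound $f$ and $f'$ rather than $f^{(k)}$ from below, and your observation that $|f(x)|\lesssim|x-x_0|^k$ on $\supp\c$, with $x_0$ an accumulation point of $\supp\c$, forces the degree-$(k-1)$ Taylor polynomial of $f$ at $x_0$ to vanish is precisely what makes the rescaled phase bounds $|\pa_y^m g_\ell|\lesssim 2^{-\ell k}$ uniform in $\ell$. One small slip: you wrote $|f^{(m)}(x)|\lesssim \max\bigl(|x|^{k-m},1\bigr)$, but you mean $\min$ (i.e., $|x|^{k-m}$ for $m\leq k$ and $O(1)$ for $m>k$); with $\max$ the bound $|\pa_y^m g_\ell|\lesssim 2^{-\ell k}$ would not follow for $2\leq m<k$, whereas the $\min$ version is what the order-$k$ vanishing actually gives and what your computation uses. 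With that wording fixed, the balance of the two regimes is exactly right: the volume bound below scale $\l^{-1/k}$ gives $\l^{-(j+1)/k}$, and the geometric sum over coarser scales is dominated by its top term at $2^{-\ell}\sim\l^{-1/k}$, giving the same power.
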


As a result, we also obtain the following version.

\begin{Lemma}\label{stphaselem2}
Let $\c\in C^{\infty}(\R\setminus \{0\})\cap C_c(\R)$ and $f\in C^{\infty}(\R;\R)$.
\begin{itemize}\itemsep=0pt
\item[$(i)$] Let $\n\geq 0$. Suppose that $|\pa_x^{\b}\c(x)|\lesssim |x|^{\n-\b}$ for $x\in \R\setminus\{0\}$ and $f''(x)\neq 0$ for $x\in \supp \c$. Then \smash{$|I(\l)|\lesssim \l^{-\frac{1}{2}}$} for $\l\gtrsim 1$.

\item[$(ii)$] Suppose that \smash{$\big|\pa_x^{\b}\c(x)\big|\lesssim |x|^{\frac{1}{2}-\b}$} for $x\in \R\setminus\{0\}$, $f'(0)=0$ and $f'''(0)\neq 0$. If $\supp \c$ is sufficiently close to $0$, then \smash{$|I(\l)|\lesssim \l^{-\frac{1}{2}}$} for $\l\gtrsim 1$.
\end{itemize}
\end{Lemma}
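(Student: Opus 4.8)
The plan is to deduce both parts from Lemma~\ref{stphaselem}, using Lemma~\ref{singularasym} for the purely non-stationary contributions, after localizing $\supp\c$ near the finitely many ``bad'' points: the origin, where $\c$ may be singular, and the critical points of $f$ lying in $\supp\c$, which are isolated---hence finite in number---because $f''\neq 0$ there forces every zero of $f'$ to be isolated. On each localized piece one may replace $f$ by $f-f(x_0)$, which only multiplies $I(\l)$ by the unimodular constant ${\rm e}^{{\rm i}\l f(x_0)}$ (here we use $f(x_0)\in\R$), so the phase can always be normalized to vanish at the reference point.

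For part $(i)$ I would take a finite partition of unity on $\supp\c$ subordinate to small neighborhoods of the points $0,x_1,\dots,x_r$ (where $x_1,\dots,x_r$ are the critical points of $f$ in $\supp\c$) together with the open set $\{x\neq 0,\ f'(x)\neq 0\}$. On the piece supported in $\{x\neq 0,\ f'(x)\neq 0\}$, $\c$ is a genuine $C_c^{\infty}$ function and $f'\neq 0$, so repeated integration by parts gives $O\bigl(\l^{-N}\bigr)$ for every $N$. On a piece supported near a critical point $x_i\neq 0$, $\c$ is smooth there, so $|\pa_x^{\a}\c(x)|\lesssim 1\lesssim|x-x_i|^{-\a}$, while $|f(x)-f(x_i)|\lesssim|x-x_i|^2$ and $|f'(x)|\gtrsim|x-x_i|$; Lemma~\ref{stphaselem} with $k=2$ and $j=0$ yields $O\bigl(\l^{-1/2}\bigr)$. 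On a piece supported near $0$ when $0$ is not a critical point, $f'\neq 0$ there, and Lemma~\ref{singularasym} with $\m=\n$ (applied to the restrictions to $\{x>0\}$ and to $\{x<0\}$) gives $O\bigl(\l^{-\n-1}\bigr)$, which is $O\bigl(\l^{-1}\bigr)$ since $\n\geq 0$. Finally, on a piece supported near $0$ when $0$ is a critical point---necessarily nondegenerate because $f''(0)\neq 0$---we have $|f(x)-f(0)|\lesssim|x|^2$, $|f'(x)|\gtrsim|x|$ and $|\pa_x^{\a}\c(x)|\lesssim|x|^{\n-\a}$, so Lemma~\ref{stphaselem} with $k=2$ and $j=\n$ gives $O\bigl(\l^{-(\n+1)/2}\bigr)$, which is $O\bigl(\l^{-1/2}\bigr)$ because $\n\geq 0$. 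Summing the finitely many contributions proves $(i)$.

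For part $(ii)$, since $f'(0)=0$ and $f'''(0)\neq 0$, the zero of $f'$ at the origin has order $m:=1$ if $f''(0)\neq 0$ and order $m:=2$ if $f''(0)=0$; in particular $m\in\{1,2\}$. As $\supp\c$ is assumed to lie in a small enough neighborhood of $0$, it contains no other zero of $f'$, and Taylor's theorem gives $|f'(x)|\gtrsim|x|^m$ and hence $|f(x)-f(0)|=\bigl|\int_0^x f'(t)\,{\rm d}t\bigr|\lesssim|x|^{m+1}$ on $\supp\c$. Applying Lemma~\ref{stphaselem} with $x_0=0$, $k=m+1\in\{2,3\}$ and $j=1/2$ (the hypothesis $|\pa_x^{\b}\c(x)|\lesssim|x|^{1/2-\b}$ being precisely the required amplitude bound) gives $|I(\l)|\lesssim\l^{-(3/2)/(m+1)}$, which equals $\l^{-3/4}$ when $m=1$ and $\l^{-1/2}$ when $m=2$; in both cases $|I(\l)|\lesssim\l^{-1/2}$.

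There is no deep obstacle here. The points that need care are the case split in $(ii)$ between a quadratic ($f''(0)\neq 0$) and a cubic ($f''(0)=0$) degeneracy of the phase at $0$, and, in $(i)$, the separate treatment of the configuration in which a critical point of $f$ coincides with the singular point of $\c$ at the origin. The common mechanism that keeps the exponent at $-1/2$ or better is that the vanishing order of $\c$ at the reference point---$\n$ in $(i)$, $1/2$ in $(ii)$---is nonnegative, so the gain $\l^{-(j+1)/k}$ furnished by Lemma~\ref{stphaselem} never falls below $\l^{-1/2}$. A minor technicality: when $\n=0$ the amplitude localized near $0$ need not be continuous at $0$, but the proof of Lemma~\ref{stphaselem} uses only the stated derivative bounds on the amplitude, not its continuity at $x_0$, so it still applies.
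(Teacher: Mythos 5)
Your argument does prove the lemma as literally stated (fixed $f$, fixed amplitude): the localization near the origin and near the finitely many nondegenerate critical points, combined with Lemma~\ref{stphaselem} (with $j=\nu$ or $j=1/2$) and Lemma~\ref{singularasym} for the non-stationary pieces, is sound, and part~(i) in particular is essentially a routine variant of the standard argument. However, your route to part~(ii) diverges from the paper's in a way that loses exactly what the lemma is used for. In the application (Proposition~\ref{I_1prop}) the phase is $\tilde{S}_q(\mu',\varphi)=S_q\bigl(1-\mu'^2,\varphi\bigr)$, which depends on the parameter $\varphi\in[0,\pi]$: one has $\partial_{\mu'}\tilde{S}_q(0,\varphi)=0$ and $\bigl|\partial_{\mu'}^3\tilde{S}_q(0,\varphi)\bigr|\gtrsim1$ uniformly, but $\partial_{\mu'}^2\tilde{S}_q(0,\varphi)$ varies continuously with $\varphi$ and can be zero or arbitrarily small, while the cutoff $\gamma_2$ (hence the size of the support of the amplitude) is fixed independently of $\varphi$. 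So the bound $|I(\lambda)|\lesssim\lambda^{-1/2}$ must hold with constants depending only on a lower bound for $|f'''(0)|$, upper bounds on derivatives, and the amplitude bounds --- not on a lower bound for $|f''(0)|$ and not on a support shrinking with $f''(0)$.

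Your dichotomy ``$f''(0)\neq0$: use $k=2$'' versus ``$f''(0)=0$: use $k=3$'' is not stable in this sense. When $0<|f''(0)|=:a\ll1$, the factorization $f'(x)=x\bigl(a+\tfrac{f'''(0)}{2}x+O(x^2)\bigr)$ produces a second critical point at distance $\sim a$ from the origin, so ``$\supp$ of the amplitude sufficiently close to $0$'' must shrink with $a$, and the van der Corput input $|f'(x)|\gtrsim|x|$ only holds with constant $\sim a$, so the constant from Lemma~\ref{stphaselem} blows up as $a\to0$. This is precisely the regime the paper's proof is built to handle: it reduces to small $a=f''(0)$ (large $a$ being covered by the standard stationary phase theorem), splits according to $a^3\lambda\ll1$ (where one splits at the scale $|x|\sim\lambda^{-1/3}$ and uses the trivial bound plus one integration by parts) and $a^3\lambda\gtrsim1$ (where one rescales $x\mapsto ax$, so that the two critical points $0$ and $x_0(a)\sim a$ separate at unit scale, and applies Lemma~\ref{stphaselem} with the large parameter $\lambda a^3$), obtaining $\lambda^{-1/2}$ uniformly in $a$ small and with a support smallness depending only on $f'''$. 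So while your write-up verifies the statement for a single fixed phase, it would not serve as a proof of the lemma in the form the paper actually invokes it; the missing ingredient is the quantitative treatment of $f''(0)$ as a small parameter (or some equivalent uniform argument).
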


\begin{Remark}\quad
\begin{itemize}\itemsep=0pt
\item[(1)] The main difference between Lemma \ref{stphaselem} and this lemma is that we do not assume $f'(0)=0$ in (i) for example.
\item[(2)] The estimates in (i) and (ii) are optimal. If in addition we assume that $\supp \c$ is close to $0$ for the case (i), then the optimal estimate would be $|I(\l)|\lesssim \l^{-\frac{1}{2}(1+\n)}$.
 \end{itemize}
\end{Remark}

\begin{proof}[Proof of Lemma~\ref{stphaselem2}]
The proof of the part (i) is almost same as the case (ii). Thus we only deal with the case (ii).

$(ii)$ We may assume $f(0)=0$ since $\big|{\rm e}^{{\rm i}\l f(0)}\big|=1$. Moreover, we may assume that $a:=f''(0)$ is sufficiently small since our result directly follows from \cite[p.334, Corollary]{S} for $a$ away from~$0$. For simplicity, we also assume $a>0$ and $f'''(0)\geq 8$.

First, we consider the case where $a^3\l\ll 1$. We write $f'(x)=ax+\frac{f'''(0)}{2}x^2+g(x)$ with ${g(x)=O\bigl(|x|^3\bigr)}$. Then we can take $\supp \c$ small enough such that $|f'(x)|\gtrsim |x|^2$ for $x\in \supp \c$ with \smash{$|x|\geq \l^{-\frac{1}{3}}$} due to the assumptions $a^3\l\ll 1$ and $f'''(0)\neq 0$.
Take $\chi\in C_c^{\infty}((-2,2);[0,1])$ such that $\chi(x)=1$ for $|x|\leq 1$ and set \smash{$\chi_{\l}(x)=\chi\bigl(\l^{\frac{1}{3}}xv)$} and $\overline{\chi_{\l}}=1-\chi_{\l}$. Then
\[
\left|\int_{\R}\c(x)\chi_{\l}(x){\rm e}^{{\rm i}\l f(x)}{\rm d}x\right|\lesssim \int_{|x|\leq 2\l^{-\frac{1}{3}}}|x|^{\frac{1}{2}} {\rm d}x\lesssim \l^{-\frac{1}{3}(1+\frac{1}{2})}=\l^{-\frac{1}{2}}.
\]
 On the other hand, the integration by parts yields
\begin{align*}
\left|\int_{\R}\c(x)\overline{\chi_{\l}}(x){\rm e}^{{\rm i}\l f(x)}{\rm d}x\right|&\leq \l^{-1}\int_{\R}|\pa_x\left(f'(x)^{-1}\c(x)\overline{\chi_{\l}}(x) \right)|{\rm d}x\\
&\lesssim \l^{-1}\int_{|x|\geq \l^{-\frac{1}{3}}}|x|^{-\frac{5}{2}}{\rm d}x\lesssim \l^{-\frac{1}{2}}.
\end{align*}
Thus we obtain $|I(\l)|\lesssim \l^{-\frac{1}{2}}$.

Next we consider the case where $a^3\l\gtrsim 1$. Taking $\supp \c$ sufficiently close to $0$, we may assume the following three conditions hold: Critical points of $f$ (on $\supp \c$) are $x=0$ or $x=x_0(a)$ with ${|x_0(a)|\in \bigl[\frac{a}{4}, \frac{3}{4}a\bigr]}$ \big(due to the factorization \smash{$f'(x)=x\bigl(a+\frac{f'''(0)}{2}x+h(x)\bigr)$} with $h(x)=O\bigl(x^2\bigr)$\big). The second condition is $|f''(x_0(a))|\gtrsim |a|$. The third condition is $|f'(x)|\gtrsim |ax|$ for $|x|\geq a$.

By scaling, we have
\begin{align}\label{stphase2scaling}
I(\l)=a^{\frac{3}{2}}\int_{\R}\c_a(x){\rm e}^{{\rm i}\l a^3f_a(x)}{\rm d}x,\qquad f_a(x):= a^{-3}f(ax),\qquad \c_a(x):=a^{-\frac{1}{2}}\c(ax).
\end{align}
We observe \smash{$f_a(x)=\frac{1}{2}x^2+\frac{f'''(0)}{6}x^3+O\bigl(a|x|^4\bigr)$} and \smash{$\big|\pa_x^{\b}\c_a(x)\big|\lesssim |x|^{\frac{1}{2}-\b}$}. Moreover, critical points of~$f_a$ are $x=0$ or $x=a^{-1}x_0(a)$ with $\big|a^{-1}x_0(a)\big|\in \bigl[\frac{1}{4},\frac{3}{4}\bigr]$, $|f_a''(x)|\gtrsim 1$ and $|f_a'(x)|\gtrsim |x|$ for $|x|\geq 1$. Then we divide the integral \eqref{stphase2scaling} into three parts $I_1(\l)$, $I_2(\l)$, $I_3(\l)$, where the integrand of~$I_j(\l)$ is supported close to $0$ for $j=1$, $a^{-1}x_0(a)^{-1}$ for $j=2$ and $|f_a''(x)|\gtrsim 1$ on the supports of the integrands of $I_1(\l)$ and $I_2(\l)$ for sufficiently small $a$. Then Lemma \ref{stphaselem} with the large parameter~$\l a^3$ gives \smash{$|I_1(\l)|\lesssim a^{\frac{3}{2}}\bigl(\l a^3\bigr)^{-\frac{1}{2}(1+\frac{1}{2})}\lesssim \l^{-\frac{1}{2}}$} and \smash{$|I_2(\l)|\lesssim a^{\frac{3}{2}}\bigl(\l a^3\bigr)^{-\frac{1}{2}}= \l^{-\frac{1}{2}}$} (for the latter case, we use $f'(x(a))=0$ and use this lemma for the phase function ${f(x)-f(x(a))}$). Moreover, integrating by parts many times, we have \smash{$|I_3(\l)|\lesssim a^{\frac{3}{2}}\bigl(\l a^3\bigr)^{-N}$} for all $N>0$. Taking~${N=\frac{1}{2}}$, we obtain \smash{$|I_3(\l)|\lesssim \l^{-\frac{1}{2}}$} and hence \smash{$|I(\l)|\lesssim \l^{-\frac{1}{2}}$}.
\end{proof}

\subsection{Asymptotics of the Bessel function}\label{AppBessel}

In this appendix, we prove Proposition \ref{Besselasymp}.
First, we seek a nice integral representation approximating the Bessel function $J_{\m}(y)$.

\begin{Lemma}\label{appBeselreduce}
There exists $\chi_1\in C_c^{\infty}\bigl(\bigl(-\frac{3\pi}{4},\frac{3\pi}{4}\bigr);[0,1]\bigr)$ such that $\chi_1(w)=1$ for $|w|\leq \frac{2\pi}{3}$ and~${\chi_1(w)=\chi_1(-w)}$ such that
\begin{align}\label{Besselgoodrep}
J_{\m}(y)=\frac{1}{2\pi}\int_{\R}{\rm e}^{{\rm i}(y\sin w-\m w)}\chi_1(w){\rm d}w+R_0(\m,y)\qquad \text{for}\quad \m\geq 0,\qquad y\geq 1,
\end{align}
where $R_0(\m,y)$ satisfies $|\pa_{\m}^{\a}R_0(\m,y)|\leq C_{N\a}(1+y+\m)^{-N}$ for each $N\in\mathbb{N}$.
\end{Lemma}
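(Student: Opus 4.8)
The plan is to start from Schläfli's contour-integral representation of $J_\mu$, peel off the piece cut out by $\chi_1$, and show that what remains is an integral over a contour on which the integrand is exponentially small in $y$ and $\mu$.

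First I would recall Schläfli's extension of Bessel's integral (see \cite[\S6.2]{W}): for $\mu\ge 0$ and $y\ge 1$,
\begin{align*}
J_\mu(y)=\frac1{2\pi}\int_{-\pi}^{\pi}{\rm e}^{{\rm i}(y\sin w-\mu w)}\,{\rm d}w-\frac{\sin(\mu\pi)}{\pi}\int_0^{\infty}{\rm e}^{-y\sinh t-\mu t}\,{\rm d}t .
\end{align*}
Parametrising the two exponential integrals along the vertical half-lines $\re w=\pm\pi$, this is rewritten as $J_\mu(y)=\frac1{2\pi}\int_{\Gamma}{\rm e}^{{\rm i}(y\sin w-\mu w)}\,{\rm d}w$, where $\Gamma$ runs from $-\pi-{\rm i}\infty$ up the line $\re w=-\pi$ to $-\pi$, then along $\R$ from $-\pi$ to $\pi$, then down the line $\re w=\pi$ to $\pi-{\rm i}\infty$; one checks directly that the contributions of the two vertical parts reproduce exactly the $\sin(\mu\pi)$-term. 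Now fix any even $\chi_1\in C_c^{\infty}\bigl((-\tfrac{3\pi}4,\tfrac{3\pi}4);[0,1]\bigr)$ with $\chi_1\equiv 1$ on $[-\tfrac{2\pi}3,\tfrac{2\pi}3]$. Writing ${\rm e}^{{\rm i}(\,\cdot\,)}=\chi_1{\rm e}^{{\rm i}(\,\cdot\,)}+(1-\chi_1){\rm e}^{{\rm i}(\,\cdot\,)}$ on the real part of $\Gamma$, and noting that $\supp\chi_1$ meets $\Gamma$ only inside $(-\tfrac{3\pi}4,\tfrac{3\pi}4)\subset\R$, the $\chi_1$-piece equals $\frac1{2\pi}\int_{\R}\chi_1{\rm e}^{{\rm i}(y\sin w-\mu w)}\,{\rm d}w$, so \eqref{Besselgoodrep} holds with
\begin{align*}
R_0(\mu,y)=\frac1{2\pi}\int_{\Gamma}\rho(w)\,{\rm e}^{{\rm i}(y\sin w-\mu w)}\,{\rm d}w ,
\end{align*}
where $\rho:=1-\chi_1$ on the real segment and $\rho:=1$ on the two vertical half-lines (consistent, since $1-\chi_1\equiv 1$ near $\pm\tfrac{3\pi}4$, hence along all of $\re w=\pm\pi$). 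The function $\rho$ is smooth along $\Gamma$, vanishes to infinite order at $w=\pm\tfrac{2\pi}3$, and is supported away from $[-\tfrac{2\pi}3,\tfrac{2\pi}3]$.

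It suffices to bound the part $R_0^{+}$ of $R_0$ coming from the contour $\mathcal C:=[\tfrac{2\pi}3,\pi]\cup\{\pi-{\rm i}s:s\ge 0\}$ (the argument for the half-line $\re w=-\pi$ is identical). The decisive point is that $y\cos w-\mu$ does not vanish on $\mathcal C$ and is of size $\gtrsim 1+y+\mu$ there: on $[\tfrac{2\pi}3,\pi]$ one has $\cos w\in[-1,-\tfrac12]$, so $|y\cos w-\mu|\ge\tfrac12 y+\mu$; and on $w=\pi-{\rm i}s$ one computes $\cos w=-\cosh s$, so $|y\cos w-\mu|=y\cosh s+\mu$, while $\bigl|{\rm e}^{{\rm i}(y\sin w-\mu w)}\bigr|={\rm e}^{-y\sinh s-\mu s}$ — exponentially small along the half-line. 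Moreover the ratios $|\partial_w^{\ell}(y\cos w-\mu)/(y\cos w-\mu)|$ are uniformly bounded on $\mathcal C$ (on the vertical part numerator and denominator are both $\lesssim y\cosh s$). Since $\partial_w{\rm e}^{{\rm i}(y\sin w-\mu w)}={\rm i}(y\cos w-\mu){\rm e}^{{\rm i}(\,\cdot\,)}$, I would integrate by parts $N$ times using $L=\bigl({\rm i}(y\cos w-\mu)\bigr)^{-1}\partial_w$: the boundary term at $w=\tfrac{2\pi}3$ vanishes because $\rho$ vanishes there to infinite order, the two one-sided boundary terms at the corner $w=\pi$ cancel (the weight being $\equiv 1$ near $\pi$), and the term at $\pi-{\rm i}\infty$ is killed by the exponential decay. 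Expanding $(\partial_w\circ({\rm i}(y\cos w-\mu))^{-1})^N\rho$ via Lemma \ref{Leibnizrule} with $r(w)={\rm i}(y\cos w-\mu)$, the above estimates give $\bigl|(\partial_w\circ r^{-1})^N\rho\bigr|\lesssim_N(1+y+\mu)^{-N}$ on $\mathcal C$, whence
\begin{align*}
|R_0^{+}|\lesssim_N(1+y+\mu)^{-N}\int_{\mathcal C}\bigl|{\rm e}^{{\rm i}(y\sin w-\mu w)}\bigr|\,|{\rm d}w|\lesssim_N(1+y+\mu)^{-N},
\end{align*}
since the last integral is at most $\tfrac{\pi}3+\int_0^{\infty}{\rm e}^{-y\sinh s-\mu s}\,{\rm d}s\lesssim 1$. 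Adding the contribution of the half-line $\re w=-\pi$ gives $|R_0(\mu,y)|\lesssim_N(1+y+\mu)^{-N}$. For the $\mu$-derivatives one differentiates under the integral sign: $\partial_\mu^{\alpha}$ produces a factor $(-{\rm i}w)^{\alpha}$ (differentiating $r$ in $\mu$ only creates further factors $(y\cos w-\mu)^{-1}$, which help), and $|w|^{\alpha}$ grows at most polynomially along the vertical half-lines, harmless against the exponential decay; repeating the argument with $(-{\rm i}w)^{\alpha}\rho$ in place of $\rho$ yields $|\partial_\mu^{\alpha}R_0(\mu,y)|\le C_{N\alpha}(1+y+\mu)^{-N}$.

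The main obstacle — and the reason one cannot simply estimate the two terms of Schläfli's formula separately — is that those two terms are individually only $O\bigl((1+y+\mu)^{-1}\bigr)$: the endpoint contributions at $w=\pm\pi$ of the first integral cancel the $\sin(\mu\pi)$-integral to leading order, and in fact to all orders. Packaging everything as a single contour integral makes this cancellation automatic and replaces it by honest exponential decay of ${\rm e}^{{\rm i}(y\sin w-\mu w)}$ as one moves off the real axis along $\re w=\pm\pi$; the remaining bookkeeping (non-vanishing of $y\cos w-\mu$, boundedness of logarithmic derivatives, vanishing of the boundary terms at the corner and at infinity) is then routine.
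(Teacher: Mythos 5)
Your proof is correct, but it takes the route the paper deliberately sidesteps, and it does so successfully. The paper starts from the Hankel-function contour representation $H_{\mu}(y)=\frac{1}{\pi{\rm i}}\int{\rm e}^{y\sinh w-\mu w}{\rm d}w$ over $(-\infty,0]\cup[0,\pi{\rm i}]\cup[\pi{\rm i},\infty+\pi{\rm i}]$, uses Cauchy's theorem to deform the contour near $\pi{\rm i}$ so that the corner disappears, splits with cutoffs $\chi_1,\chi_2,\chi_3$, takes real parts (the piece over $(-\infty,0]$ is purely imaginary and drops out), and then runs the same non-stationary-phase estimate you use, based on the phase derivative being $\gtrsim y+\mu$ and the real part of the exponent being $\leq 0$ off the main segment. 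In the Remark following the lemma the authors explicitly note that one could instead argue from Schl\"afli's formula by integration by parts, but that this seems to require tracking the boundary terms at $w=\pi$ and $w=0$, and they chose the contour deformation to avoid that. Your proposal is exactly this alternative, and your key observation -- that the two terms of Schl\"afli's formula are each only $O\bigl((1+y+\mu)^{-1}\bigr)$, but that recombining them into a single oriented contour through $\pm\pi$ makes the corner boundary terms cancel identically at every step of the integration by parts (since the amplitude is $\equiv 1$, hence analytic, near the corners) -- is precisely the fix for the issue the authors flagged. The two proofs rest on the same analytic mechanism ($|y\cos w-\mu|\gtrsim 1+y+\mu$ on the discarded region, exponential smallness of $\bigl|{\rm e}^{{\rm i}(y\sin w-\mu w)}\bigr|$ on the vertical rays, and the Leibniz-type expansion of $\bigl(\partial_w\circ r^{-1}\bigr)^N$); what differs is the bookkeeping. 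Your version avoids constructing the auxiliary deformed curve and invoking Cauchy's theorem, and works directly with a symmetric contour for $J_{\mu}$ so no real-part/evenness manipulation is needed for the main term; the paper's version pays for the deformation up front but then integrates by parts only over smooth pieces, with no corner terms to discuss. Your handling of the $\mu$-derivatives (the factor $(-{\rm i}w)^{\alpha}$ being polynomial against the superexponential decay ${\rm e}^{-y\sinh s-\mu s}$ with $y\geq 1$) matches what is needed, so the proposal stands as a complete alternative proof.
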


\begin{proof}
It is known (see \cite[Section~6.21\,(3)]{W}, see also \cite[equation~(9.7)]{IJ}) that the Hankel function can be written as
\begin{align}\label{Hankelint}
H_{\m}(y)=\frac{1}{\pi {\rm i}}\left(\int_{-\infty}^0+\int_0^{\pi {\rm i}}+\int_{\pi {\rm i}}^{\infty+\pi {\rm i}}\right){\rm e}^{y\sinh w-\m w}{\rm d}w
\end{align}
and $\re H_{\m}(y)=J_{\m}(y)$. Since the curve $[-\infty,0]\cup [0,\pi {\rm i}]\cup [\pi {\rm i},\infty+\pi {\rm i}]$ is not smooth, we will deform it to remove the corner at $\pi {\rm i}$ by using the Cauchy integral formula, where we note that the corner at $0$ is not involved with the asymptotics of $J_{\n}(y)$.
More precisely, we take a~curve~${\c\colon\R\to \mathbb{C}}$ such that $\c$ is smooth and $|\c'(r)|=1$ except at $0$, and
\begin{gather*}
\c(r)=\begin{cases}
r& \text{for}\quad r\leq 0,\\
 {\rm i}r& \text{for}\quad 0\leq r\leq \frac{3}{4}\pi,\\
r-\frac{2}{3}\pi-L+ {\rm i}\pi& \text{for}\quad r\geq \frac{3}{4}\pi+L,
\end{cases} \\
\begin{cases}\re \c(r)\geq 0,\\ \im \c(r)\in \bigl[\frac{2\pi}{3},\pi\bigr], \end{cases}\qquad \text{for}\quad \frac{2}{3}\pi\leq r\leq \frac{3}{4}\pi+2L
\end{gather*}
with a constant $L>0$. Since $\re (y\sinh w-\m w)=y\sinh(\re w)\cos(\im w)-\m \re w$ and $y,\m\geq 0$, we have
\begin{align}\label{Besphaseleq0}
\re (y\sinh w-\m w)|_{w=\c(r)}\leq 0\qquad \text{for}\quad r\geq 0.
\end{align}

Now we take $\chi_1\in C_c^{\infty}\bigl(\bigl(-\frac{3\pi}{4},\frac{3\pi}{4}\bigr);[0,1]\bigr)$, $\chi_2\in C_c^{\infty}\bigl(\bigl[\frac{2\pi}{3},\frac{3\pi}{4}+2L\bigr];[0,1]\bigr)$ and $\chi_3\in C^{\infty}(\R;[0,1])$ such that $\chi_1+\chi_2+\chi_3=1$ on $[0,\infty)$ and
\begin{gather*}
\chi_1(r)=\chi_1(-r),\qquad \chi_1(r)=1\qquad \text{for}\quad |r|\leq \frac{2\pi}{3},\\
 \chi_3(r)=\begin{cases}0& \text{for}\quad r\leq \frac{3}{4}\pi+L,\\ 1& \text{for}\quad r\geq \frac{3}{4}\pi+2L. \end{cases}
\end{gather*}

By the Cauchy's integral formula, we rewrite \eqref{Hankelint} as
\begin{gather*}
H_{\m}(y)=\frac{1}{\pi {\rm i}}\int_{-\infty}^0{\rm e}^{y\sinh w-\m w}{\rm d}w+\frac{1}{\pi}\int_0^{\pi}{\rm e}^{{\rm i}(y\sin w-\m w)}\chi_1(w){\rm d}w+\tilde{R}_0(\m,y),\\
\tilde{R}_0(\m,y)=\frac{1}{\pi {\rm i}}\int_{\R}{\rm e}^{y\sinh \c(r)-\m \c(r)}\chi_2(r)\tilde{\c}'(r){\rm d}r\\
\phantom{\tilde{R}_0(\m,y)=}{}+\frac{{\rm e}^{-{\rm i}\pi\m}}{\pi {\rm i}}\int_{\R}{\rm e}^{-y\sinh w-\m w}\chi_3\left(w+\frac{2}{3}\pi+L\right){\rm d}w.
\end{gather*}
Since the first term is purely imaginary, we obtain
\begin{align*}
J_{\m}(y)=\re H_{\m}(y)=\frac{1}{2\pi}\int_{\R}{\rm e}^{{\rm i}(y\sin w-\m w)}\chi_1(w){\rm d}w+\re \tilde{R}_0(\m,y),
\end{align*}
where we use $\chi_1(r)=\chi_1(-r)$, \smash{$\supp \chi_1\subset \bigl(-\frac{3}{4}\pi,\frac{3}{4}\pi\bigr)$} and $y\sin(- w)-\m(-w)=-(y\sin w-\m w)$. Now we set $R_0(\m,y):=\re \tilde{R}_0(\m,y)$.
It suffices to prove $|\pa_{\m}^{\a}\tilde{R}_0(\m,y)|\leq C_{N\a}(1+y+\m)^{-N}$ for each $N\in\mathbb{N}$, $\m\geq 0$ and $y\geq 1$.

We note that $|\pa_r(y\sinh\c(r)-\m\c(r))|\geq |y\cosh (\re \c(r))\cos(\im \c(r))-\m|\gtrsim y+\m$ for $r\in \supp \chi_2$, where we use $|\c'(r)|=1$ and $\re\c\geq 0$, $\im \c\in [\frac{2}{3}\pi,\pi]$ on $\supp \chi_2$. Moreover, $|\pa_w(-y\sinh w-\m w)|=y\cosh w+\m \gtrsim y+\m$. Thus, the integration by parts with \eqref{Besphaseleq0} yields~${\big|\pa_{\m}^{\a}\tilde{R}_0(\m,y)\big|\leq C_{N\a}(1+y+\m)^{-N}}$ for each $N\in\mathbb{N}$, $\m\geq 0$ and $y\geq 1$.
\end{proof}

\begin{Remark}
This lemma might be proved also by integration by parts and by Schl\"afli's formula
\begin{align*}
J_{\m}(y)=\frac{1}{\pi}\int_0^{\pi}\cos (y(\sin w)-\m w){\rm d}w-\frac{\sin (\pi\m)}{\pi}\int_0^{\infty}{\rm e}^{-y(\sinh w)-\m w}{\rm d}w,
\end{align*}
which follows from \eqref{Hankelint}.
However, to do this, we need to calculate the boundary terms at~${w=\pi}$ in the first term and the one at $w=0$ in the second term explicitly. In the above proof, we avoid it and use the deformation of the integral instead.
\end{Remark}

Now we study the first term (we call it $F_1(y,\m)$) of the right-hand side in \eqref{Besselgoodrep}. To do this, we use a variant of the stationary phase theorem. Roughly speaking, the phase function~${y\sin w-\m w}$ of $F_1(y,\m)$ has two non-degenerate critical points when $\m\ll y$ and no critical points when $\m\gg y$. When $\m \approx y$, the critical points can become degenerate although the third derivative of the phase function does not vanish there.
The difficulty here is to deal with them in a uniform way. We will use the following lemma with $a=y-\m$, $b=y$, $f(w)=\sin w-w$.

\begin{Lemma}\label{appstphasevai}
Let $\chi\in C_c^{\infty}((-3,3))$. Let $f\in C^{\infty}([-3,3];\R)$ such that $f^{(j)}(0)=0$ for $j=0,1,2$, $\mp f''(w)\leq 0$ and $c_1|w|\leq |f''(w)|\leq c_2|w|$ for $0\leq \pm w\leq 3$ with a constant $0<c_1<c_2$. For $a\in \R$, $b\geq 1$ and $j\in\mathbb{N}$, set
\begin{align}\label{appI_jdef}
I_j(a,b)=\int_{\R}\chi(w)w^j{\rm e}^{{\rm i}aw+{\rm i}bf(w)}{\rm d}w.
\end{align}

\begin{itemize}\itemsep=0pt
\item[$(i)$] Suppose $a,b\geq 1$, \smash{$|a|^{\frac{1}{2}}b^{-\frac{1}{2}}\leq 1$}, \smash{$|a|^{\frac{3}{2}}b^{-\frac{1}{2}}\geq 1/8$}. Then the map $[-3,3]\ni w\mapsto aw+bf(w)$ has just two critical points $w_{\pm}(a,b)$ with $\pm w_{\pm}(a,b)>0$. Moreover, we can write
\begin{align*}
I_0(a,b)=\sum_{\pm}I_{0,\pm}(a,b){\rm e}^{{\rm i} a w_{\pm}(a,b)+{\rm i}bf(w_{\pm}(a,b))},\qquad |\pa_a^{\a}I_{0,\pm}(a,b)|\leq C_{\a}a^{-\frac{1}{4}-\a}b^{-\frac{1}{4}}
\end{align*}
uniformly in $a,b\geq 1$, \smash{$|a|^{\frac{1}{2}}b^{-\frac{1}{2}}\leq 1$}, \smash{$|a|^{\frac{3}{2}}b^{-\frac{1}{2}}\geq 1/8$}.

\item[$(ii)$] For each $j\in\mathbb{N}$ and $N>0$, there exist $C_j>0$ and $C_{jN}>0$ such that
\begin{align*}
|I_j(a,b)|\lesssim \begin{cases}
C_jb^{-\frac{j+1}{3}}, & a\in \R,\  b\geq 1,\  |a|^{\frac{1}{2}}b^{-\frac{1}{2}}\leq 1, \\
C_{jN}\bigl(|a|^{\frac{3}{2}}b^{-\frac{1}{2}}\bigr)^{-N}|a|^{-j-\frac{1}{4}}b^{-\frac{1}{4}},&  a\leq -1,\  b\geq 1,\
  |a|^{\frac{3}{2}}b^{-\frac{1}{2}}\geq 1/8.
\end{cases}
\end{align*}
\end{itemize}
\end{Lemma}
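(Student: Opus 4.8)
The plan is to read everything off the phase $\phi(w)=aw+bf(w)$, with $\phi'(w)=a+bf'(w)$ and $\phi''(w)=bf''(w)$. The hypotheses force $f(0)=f'(0)=f''(0)=0$ and, since $c_1|w|\le|f''(w)|$, also $f'''(0)\ne0$; together with $|f'(w)|\sim w^2$, $|f''(w)|\sim|w|$ on $[-3,3]$ and the fixed sign of $f'$ there --- say $f'\le0$, the other sign being symmetric --- this shows that for $a>0$ with $a\lesssim b$ the equation $\phi'(w)=0$, i.e.\ $f'(w)=-a/b$, has exactly two roots $w_\pm(a,b)$ with $\pm w_\pm>0$ and $|w_\pm|\sim(a/b)^{1/2}$, while for $a\le0$ it has none. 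At $w_\pm$ one has $|\phi''(w_\pm)|=b|f''(w_\pm)|\sim(ab)^{1/2}$ and $|\phi'''|\sim b$, so the cubic term in the Taylor expansion of $\phi$ at $w_\pm$ is negligible against the quadratic one on the Gaussian scale $|\phi''(w_\pm)|^{-1/2}$ exactly when $|a|^{3/2}b^{-1/2}\gtrsim1$ --- the hypothesis of (i). This dichotomy organises the whole argument: (i) is the regime of two well-separated non-degenerate critical points; the first case of (ii) is the regime where the critical point (if any) lies within $O(b^{-1/3})$ of the origin and the stationary phase degenerates; the second case of (ii) is genuinely non-stationary.

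For (i), I would first apply the implicit function theorem to $a+bf'(w)=0$: since $bf''(w_\pm)\ne0$, the points $w_\pm(a,b)$ are smooth in $a$ with $\partial_aw_\pm=-1/(bf''(w_\pm))$, and inductively each further $\partial_a$ costs a factor $\sim a^{-1}$ (as $|f''(w_\pm)|\sim(a/b)^{1/2}$ and $|f'''|\sim1$), so $|\partial_a^\alpha w_\pm|\lesssim a^{1/2-\alpha}b^{-1/2}$; the critical values $\Phi_\pm:=aw_\pm+bf(w_\pm)$ satisfy the clean identity $\partial_a\Phi_\pm=w_\pm$. Next I would fix an $(a,b)$-dependent partition $\chi=\chi_++\chi_-+\chi_0$, with $\chi_\pm$ supported in an interval of length $\sim|w_\pm|$ about $w_\pm$ and $\chi_0$ supported where $|w-w_\pm|\gtrsim|w_\pm|$ for both signs, so all cutoff derivatives are $\lesssim|w_\pm|^{-\alpha}$. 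On $\supp\chi_0$ one has $|\phi'(w)|\gtrsim a$ (from monotonicity of $f'$ and $|f'(w)|\sim w^2$), and integrating by parts with $L=(i\phi')^{-1}\partial_w$ and Lemma~\ref{Leibnizrule} gains $\sim(a|w_\pm|)^{-1}=(a^{3/2}b^{-1/2})^{-1}\lesssim1$ per step, so that piece is $O\bigl((a^{3/2}b^{-1/2})^{-\infty}\bigr)$ and may be folded into $I_{0,+}e^{i\Phi_+}$. On $\supp\chi_\pm$ one factors out $e^{i\Phi_\pm}$ and rescales $w=w_\pm+(b|w_\pm|)^{-1/2}u$; the new phase is $\pm\frac{1}{2}u^2$ plus a $C^\infty$ perturbation with uniformly small derivatives --- this smallness being precisely the content of $|a|^{3/2}b^{-1/2}\gtrsim1$ --- so stationary phase with large parameter $b|w_\pm|\sim(ab)^{1/2}$ gives $|I_{0,\pm}|\lesssim(ab)^{-1/4}\sim a^{-1/4}b^{-1/4}$, and differentiating in $a$ loses the $a^{-1}$ per derivative recorded above, yielding $|\partial_a^\alpha I_{0,\pm}|\lesssim a^{-1/4-\alpha}b^{-1/4}$.

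For (ii), in the first regime I would rescale $w=b^{-1/3}v$, so $\phi(b^{-1/3}v)=ab^{-1/3}v+\frac{f'''(0)}{6}v^3+O(b^{-1/3}v^4)$ --- a perturbed cubic with bounded linear coefficient --- and $I_j=b^{-(j+1)/3}\int\chi(b^{-1/3}v)\,v^j e^{i\psi(v)}\,dv$. Cutting the $v$-integral at $|v|\sim\delta b^{1/3}$ for a fixed small $\delta$: on $|v|\le\delta b^{1/3}$ I would invoke van der Corput in the form of Lemma~\ref{stphaselem} (with $k=3$, $x_0=0$, amplitude vanishing to order $j$), which bounds the integral uniformly in the bounded parameter $ab^{-1/3}$ and in $b$; on $|v|\gtrsim\delta b^{1/3}$, i.e.\ $|w|\gtrsim\delta$, one has $|\phi'(w)|=|a+bf'(w)|\sim b$, so repeated integration by parts makes this piece negligible --- hence $|I_j|\lesssim b^{-(j+1)/3}$. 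In the second regime $a\le-1$, $|a|^{3/2}b^{-1/2}\ge1/8$, the sign condition $f'\le0$ gives $|\phi'(w)|=|a|+b|f'(w)|\gtrsim|a|+bw^2$, never $0$; splitting at the transitional scale $|w|\sim(|a|/b)^{1/2}$ where $|\phi'|$ passes from $\sim|a|$ to $\sim bw^2$ (and where a harmless dyadic decomposition absorbs the amplitude singularity at $0$), and integrating by parts $N$ times with $L=(i\phi')^{-1}\partial_w$ using $|f'|\sim w^2$, $|f''|\sim|w|$ and Lemma~\ref{Leibnizrule}, produces the claimed bound, with the transitional scale accounting for the factor $|a|^{-j-1/4}b^{-1/4}$ and the large quantity $|a|^{3/2}b^{-1/2}$ for the gain per integration by parts.

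The hard part is the uniformity in $a$: building in (i) a partition of unity with derivatives controlled on the floating scale $|w_\pm|\sim(a/b)^{1/2}$, and --- more seriously --- carrying the exact $a^{-1}$ loss per $\partial_a$ through the stationary phase expansion so as to land on $a^{-1/4-\alpha}b^{-1/4}$ and not something weaker. The other delicate point is the transitional regime in the first case of (ii), where the critical point merges into the origin: the Airy scaling $b^{-1/3}$ is then forced, and one must verify that the remainder $O(b^{-1/3}v^4)$ in the rescaled phase --- which is not small for $|v|\sim b^{1/3}$ --- does not spoil the van der Corput estimate, which is exactly why the cut has to be made at a fixed $\delta$ in the original variable.
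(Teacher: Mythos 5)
Your overall architecture is the same as the paper's: isolate the two critical points $w_\pm$ at scale $(a/b)^{1/2}$, run stationary phase there with Hessian $\sim(ab)^{1/2}$, kill the complementary region by non-stationary integration by parts, use an Airy/van der Corput bound in the degenerate regime of (ii) and pure integration by parts for $a\le-1$. The paper implements this after one global rescaling $w\mapsto sw$, $s=|a|^{1/2}b^{-1/2}$, which places the critical points at unit scale and writes the phase as $\lambda F(w,s)$, $\lambda=|a|^{3/2}b^{-1/2}$, with $F$ smooth in $(w,s)$ down to $s=0$; you work in the original variable with floating scales, which is workable. The genuine gap is the derivative bounds in part (i). Your justification of $|\partial_a^{\alpha}I_{0,\pm}|\lesssim a^{-1/4-\alpha}b^{-1/4}$ is the bare assertion that ``each $\partial_a$ costs $a^{-1}$'', and that does not follow from what you recorded. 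Writing $\phi_a(w)=aw+bf(w)$, after factoring out ${\rm e}^{{\rm i}\Phi_\pm}$ and substituting $w=w_\pm+\epsilon u$ with $\epsilon=(b|w_\pm|)^{-1/2}$, a $\partial_a$ falling on the reduced phase brings down $\partial_a\bigl[\phi_a(w_\pm+\epsilon u)-\Phi_\pm\bigr]=\epsilon u+\phi_a'(w_\pm+\epsilon u)\,\partial_a(w_\pm+\epsilon u)$, whose naive size is $\epsilon|u|\sim(ab)^{-1/4}|u|$; this is much larger than $a^{-1}$ precisely when $\lambda\gg1$, so a term-by-term estimate yields only $(ab)^{-1/4}$ per derivative and misses the claimed bound. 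What makes the claim true is a cancellation: since $\partial_aw_\pm=-1/\phi_a''(w_\pm)$, the term $\phi_a'(w_\pm+\epsilon u)\partial_aw_\pm$ cancels $\epsilon u$ to leading order, leaving a factor $O\bigl(a^{-1}u^2\bigr)$, i.e.\ of size $a^{-1}$ and vanishing to second order at the critical point. You never exhibit this (you only note $\partial_a\Phi_\pm=w_\pm$ and flag the bookkeeping as ``the hard part''), so the crux of (i) is not actually proved. The paper's parametrization makes it automatic: there the phase is $\lambda g_\pm(w,s)w^2$ with $|g_\pm(0,s)|\gtrsim1$, each $\partial_\lambda$ brings down the quadratically vanishing phase and costs $\lambda^{-1}$ (Lemma \ref{stphaselem} with $k=2$ and amplitude order raised by $2$), $s\partial_s$ costs nothing, and $\partial_a=s\bigl(\tfrac32\partial_\lambda+\tfrac12\lambda^{-1}s\partial_s\bigr)$ converts this into exactly $a^{-1}$ per derivative; also the remainder $R_1$ can only be absorbed into $I_{0,\pm}$ after checking $|\partial_a^{\alpha}{\rm e}^{-{\rm i}\Phi_\pm}|\lesssim s^{-\alpha}$, a step you skip.

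In the first case of (ii) you silently replace the hypothesis $|a|^{1/2}b^{-1/2}\le1$ (i.e.\ $|a|\le b$) by ``$ab^{-1/3}$ bounded'': your Airy rescaling and the use of Lemma \ref{stphaselem} with $k=3$, $x_0=0$ need the critical points to lie within $O\bigl(b^{-1/3}\bigr)$ of the origin, which holds only for $|a|\lesssim b^{1/3}$. For $b^{1/3}\ll a\le b$ the two critical points are nondegenerate at distance $\sim(a/b)^{1/2}\gg b^{-1/3}$ from $0$, where $|\psi'(v)|\gtrsim v^2$ fails; each contributes $\sim|w_\pm|^{j}(ab)^{-1/4}$, which for $j\ge1$ generically exceeds $b^{-(j+1)/3}$, so that range is outside the reach of your argument (it is also the only range used in Proposition \ref{Besselasymp}, where $|a|\le2b^{1/3}$, and the paper's own appeal to Lemma \ref{stphaselem} with $k=3$ at that point is subject to the same caveat). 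Finally, in the second case of (ii) there is no ``amplitude singularity at $0$'' to absorb --- the integrand $\chi(w)w^j$ is smooth --- and your unscaled integration by parts with $|\phi_a'(w)|\gtrsim|a|+bw^2$, combined with Lemma \ref{Leibnizrule}, is fine and in fact also covers the paper's separate case $|a|\ge b$ in one stroke.
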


\begin{proof}
When $a=0$, then the claim in $(ii)$ directly follows from the stationary phase theorem (see Lemma \ref{stphaselem} with $k=3$). Hence in the following, we assume $a\neq 0$.
Set
\begin{gather*}
s=|a|^{\frac{1}{2}}b^{-\frac{1}{2}},\qquad \l=|a|^{\frac{3}{2}}b^{-\frac{1}{2}},\\ F(w,s)=(\sgn a)w+b\l^{-1}f(sw)=(\sgn a)w+s^{-3}f(sw).
\end{gather*}
By the change of variable $w\mapsto sw$, we have
\begin{align}\label{appI_jscaling}
I_j(a,b)=s^{j+1}\int_{\R}\chi(sw)w^j{\rm e}^{{\rm i}\l F(w,s)}{\rm d}w.
\end{align}
By the assumption on $f$, we have $-\frac{c_2}{2}w^2\leq f'(w)\leq -\frac{c_1}{2}w^2$ for $|w|\leq 3$.

We note that there is $C_{\a}>0$ such that $|\pa_{w}^{\a}F(w,s)|\leq C_{\a}$ for $\a\geq 2$ uniformly as long as~${s\leq 1}$. Moreover,
\begin{align}\label{apprescalef'}
|\pa_wF(w,s)|=\big|(\sgn a)+s^{-2}f'(sw)\big|\geq c\bigl(1+|w|^2\bigr)\qquad \text{for}\quad |w|\in \bigl[2c_1^{-\frac{1}{2}},\infty\bigr)
\end{align}
uniformly in $a$. Moreover, when $a\geq 0$, the derivative $\pa_wF(w,s)=1+s^{-2}f'(sw)$ has just the two critical points $w_{\pm}(s)$ with
\[
\pm w_{\pm}(s)\in \bigl[\sqrt{2c_2^{-1}},\sqrt{2c_1^{-1}}\bigr] \qquad \text{and} \qquad c_1 \sqrt{2c_2^{-1}}\leq\big|\pa_w^2f(w_{\pm}(s),s)\big| \leq c_2 \sqrt{2c_1^{-1}}.
\]
 When $a<0$, then $F(\cdot,s)$ has no critical points. Moreover, $w_{\pm}(a,b):=sw_{\pm}(s)$ are critical points of the map $w\mapsto aw+bf(w)$.

 $(i)$ Suppose $a,b\geq 1$, $0<s\leq 1$ and $\l\geq 1/8$.
Let $\chi_+\in C_c^{\infty}((0,\infty);[0,1])$ such that
\[
\chi_+(w)=1 \qquad \text{for} \quad w\in \bigl[\sqrt{2c_2^{-1}},\sqrt{2c_1^{-1}}\bigr], \quad \supp \chi_+\subset \bigl(0,2\sqrt{c_1^{-1}}\bigr].
\]
 Set $\chi_-(w):=\chi_+(-w)$ and $\chi_0=1-\chi_+-\chi_-$.
We write \eqref{appI_jscaling} for $j=0$ as
\begin{gather*}
I_0(a,b)= \sum_{\pm}{\rm e}^{{\rm i}\l F(w_{\pm}(s),s)}I_{0,\pm}(a,b)+R_1(a,b),\\
I_{0,\pm}(a,b):=s\int_{\R}\chi(sw)\chi_{\pm}(w){\rm e}^{{\rm i}\l (F(w,s)-F(w_{\pm}(s),s))}{\rm d}w,\\
 R_1(a,b):=s\int_{\R}\chi(sw)\chi_{0}(w){\rm e}^{{\rm i}\l F(w,s)}{\rm d}w.
\end{gather*}

We prove that for $\a,\b,N\in\mathbb{N}$,
\begin{align}\label{I0abest}
\big|\pa_{\l}^{\a}(s\pa_{s})^{\b}I_{0,\pm}(a,b)\big|\lesssim s\l^{-\frac{1}{2}-\a},\qquad \big|\pa_{\l}^{\a}(s\pa_{s})^{\b}R_1(a,b)\big|\lesssim s\l^{-N}.
\end{align}
First, we observe
\begin{align}\label{wpmder}
|(s\pa_s)^{\a}w_{\pm}(s)|\lesssim 1\qquad \text{for}\quad 0<s\leq 1.
\end{align}
The estimate for $\a=0$ follows from the fact \smash{$\pm w_{\pm}(s)\in \bigl[\sqrt{2c_2^{-1}},\sqrt{2c_1^{-1}}\bigr]$} as we have proved.
To see \eqref{wpmder} for $\a\geq 1$, we recall $w_{\pm}(s)$ are the critical points of $F(\cdot,s)$, that is,
\begin{align*}
1+s^{-2}f'(sw_{\pm}(s))=0\Leftrightarrow f'(sw_{\pm}(s))=s^2.
\end{align*}
Thus we have
\begin{align}\label{wpdereq1}
f''(sw_{\pm}(s))(w_{\pm}(s)+s\pa_sw_{\pm}(s))=2s.
\end{align}
Since $|f''(sw_{\pm}(s))|\sim s|w_{\pm}(s)|\sim s$, we have $s|w_{\pm}(s)+s\pa_sw_{\pm}(s)|\lesssim 2s$, which implies $|s\pa_sw_{\pm}(s)|\allowbreak\lesssim 1$. This proves \eqref{wpmder} for $\a=1$. Differentiating \eqref{wpdereq1} many times and using induction on $\a$, we obtain \eqref{wpmder} for general $\a\geq 0$. By Taylor's theorem and the assumption on $f$, the function~${F(w,s)=w+s^{-3}f(sw)}$ is smooth with respect to $w$ and $s\in [0,1]$, where the point is that $F$ is smooth even near $s=0$. It turns out from this fact, \eqref{wpmder} and the Taylor's theorem that
\begin{align*}
F(w+w_{\pm}(s),s)-F(w_{\pm}(s),s)=g_{\pm}(w,s)w^2,
\end{align*}
where $g_{\pm}$ is smooth with respect to $w\in \supp \chi_{\pm}$ and $s\in (0,1]$ and satisfies
\begin{align*}
\big|\pa_w^{\a}(s\pa_s)^{\b}g_{\pm}(w,s)\big|\lesssim 1\qquad \text{for}\quad w\in \supp \chi_{\pm},\quad s\in (0,1].
\end{align*}
Moreover, we have $g_{\pm}(0,w)=\bigl(\pa_w^2F\bigr)(w_{\pm}(s),s)=s^{-1}f''(sw_{\pm}(s))$, which satisfies $|g_{\pm}(0,s)|\gtrsim 1$ uniformly in $0<s\leq 1$. Consequently, $I_{0,\pm}(a,b)$ can be written as
\begin{align*}
I_{0,\pm}(a,b)=s\int_{\R}\g_{0,\pm}(w,s){\rm e}^{{\rm i}\l g_{\pm}(w,s)w^2}{\rm d}w,
\end{align*}
where $\g_{0,\pm}(w,s)=\chi(s(w+w_{\pm}(s)))\chi_{\pm}(w+w_{\pm}(s))$. We also note that
\begin{align*}
\big|\pa_w^{\a}(s\pa_s)^{\b}\g_{0,\pm}(w,s)\big|\lesssim 1\qquad \text{for}\quad w\in\R,\quad s\in (0,1].
\end{align*}
Now we prove the first estimate of \eqref{I0abest}. For simplicity, we consider the case $(\a,\b)=(1,0)$ or $(\a,\b)=(0,1)$ only. To do this, we write
\begin{gather*}
\pa_{\l}I_{0,\pm}(a,b)={\rm i}s\int_{\R}\g_{0,\pm}(w,s)g_{\pm}(w,s)w^2{\rm e}^{{\rm i}\l g_{\pm}(w,s)w^2}{\rm d}w,\\
s\pa_{s}I_{0,\pm}(a,b)=s\int_{\R}\bigl(\g_{0,\pm}(w,s)\bigl(1+{\rm i}\l s\pa_sg_{\pm}(w,s) w^2\bigr) +s\pa_s\g_{0,\pm}(w,s) \bigr) {\rm e}^{{\rm i}\l g_{\pm}(w,s)w^2}{\rm d}w.
\end{gather*}
Now we apply the stationary phase theorem (see Lemma \ref{stphaselem} with $k=2$, $j=0,2$, $x_0=0$) and obtain the first estimate of \eqref{I0abest} for $(\a,\b)=(1,0), (0,1)$. The estimates for its higher derivatives are similarly proved. To prove the second estimate of \eqref{I0abest}, we use the fact that~${\pa_wF(w,s)}$ does not vanish for $w\in \supp \chi_0\cap \supp \chi(s\cdot )$ with the uniform estimate \eqref{apprescalef'}.
 Using the integration by parts many times with \eqref{apprescalef'} with the estimates for its higher-order derivatives, we obtain the second estimate of \eqref{I0abest}.

Next, we show
\begin{align}\label{appI_0ader}
|\pa_{a}^{\c}I_{0,\pm}(a,b)|\leq C_{\c}a^{-\frac{1}{4}-\c}b^{-\frac{1}{4}},\qquad |\pa_{a}^{\c}R_1(a,b)|\leq C_{\c N}\bigl(a^{\frac{3}{2}}b^{-\frac{1}{2}}\bigr)^{-N} a^{-\frac{1}{4}-\c}b^{-\frac{1}{4}}
\end{align}
for $\c\in\mathbb{N}$ and $N>0$.
Set $I_{0,\pm}^{\c}(a,b)=\pa_a^{\c}I_{0,\pm}(a,b)$. We can deduce $\big|\pa_{\l}^{\a}(s\pa_{s})^{\b}I_{0,\pm}^{\c}(a,b)\big|\leq \smash{C_{\a\b}s^{1+\c}\l^{-\frac{1}{2}-\a-\c}}$ by \eqref{I0abest} and induction due to \smash{$I_{0,\pm}^{\c+1}=\pa_aI_{0,\pm}^{\c}$}, $\pa_a= (\pa_a\l)\pa_{\l}+(\pa_as)\pa_s= s\bigl(3/2\pa_{\l}+2^{-1}\l^{-1}s \pa_s\bigr)$ and $s\leq 1, \l\geq 1/8$. In particular, we obtain
\[
|\pa_a^{\c}I_{0,\pm}(a,b)|\leq C_{\c}s^{1+\c}\l^{-\frac{1}{2}-\c}=C_{\c}a^{-\frac{1}{4}-\c}b^{-\frac{1}{4}}.
\]
 The estimate for $R_1(a,b)$ is similarly proved.

Since $w_{\pm}(a,b)=sw_{\pm}(s)$, we have $\l f_{a,b}(w_{\pm}(s))=\l w_{\pm}(s)+bf(sw_{\pm}(s))=aw_{\pm}(a,b)+bf(w_{\pm}(a,b)) $ and hence
\begin{align*}
I_0(a,b)=\sum_{\pm}I_{0,\pm}(a,b){\rm e}^{{\rm i}(aw_{\pm}(a,b)+bf(w_{\pm}(a,b)) )}+R_1(a,b).
\end{align*}
Finally, we show that $R_1(a,b)$ can be absorbed into either $I_{0,+}$ or $I_{0,-}$. To do this, it suffices to show \smash{$\big|\pa_a^{\a}\bigl({\rm e}^{-{\rm i}(aw_{\pm}(a,b)+bf(w_{\pm}(a,b)) )} R_1(a,b)\bigr)\big|\leq C_{\a N}a^{-\frac{1}{4}-\a}b^{-\frac{1}{4}}$}. We observe
\begin{align*}
\big|\pa_a^{\a}{\rm e}^{{\rm i}\l f_{a,b}(w_{\pm}(s))}\big|\leq C_{\a}s^{-\a}=C_{\a}a^{-\frac{\a}{2}}b^{\frac{\a}{2}}.
\end{align*}
since $\pa_a (\l f_{a,b}(w_{\pm}(s)))=s^{-1}w_{\pm}(s)$. Thus the second inequality of \eqref{appI_0ader} with $N\gg 1$ gives the desired estimate.

 $(ii,\,1)$ First, we consider the case $a\in \R$, $b\geq 1$ and $s\leq 1$.
Let $\chi_1\in C_c^{\infty}(\R;[0,1])$ such that
\[
\chi_1(w)=1 \qquad \text{for} \ |w|\leq \sqrt{2c_1^{-1}},\qquad \chi_1(w)=0  \quad \text{for} \ |w|\geq 2\sqrt{c_1^{-1}}.
\] Set $\chi_2=1-\chi_1$. We write~\eqref{appI_jscaling} for $j=0$ as
\begin{align*}
I_j(a,b)=&s^{j+1}\int_{\R}\chi(sw)w^j(\chi_1(w)+\chi_2(w)) {\rm e}^{{\rm i}\l f_{a,b}(w)}{\rm d}w=:I_{j,1}(a,b)+I_{j,2}(a,b).
\end{align*}
The second term is easy to handle: The integration by parts with \eqref{apprescalef'} yields $|I_{j,2}(a,b)|\leq\smash{ C_{jN}s^{j+1}\l^{-N}}$ for $N>0$. Taking $N=\frac{j+1}{3}$, we have \smash{$|I_{j,2}|\lesssim s^{j+1}\l^{-\frac{j+1}{3}}=b^{-\frac{j+1}{3}}$}. Thus, we focus on the estimate for $I_{1,j}$. When $\l\leq 1$ (which is equivalent to \smash{$|a|\leq b^{\frac{1}{3}}$}), then $|I_{j,1}(a,b)|\lesssim\smash{ s^{j+1}=|a|^{\frac{j+1}{2}}b^{-\frac{j+1}{2}}\leq b^{-\frac{j+1}{3}}}$. On the other hand, when $\l\geq 1$, then the stationary phase theorem (see Lemma \ref{stphaselem} with $k=3$) implies \smash{$|I_{j,1}(a,b)|\lesssim s^{j+1}\l^{-\frac{j+1}{3}}=b^{-\frac{j+1}{3}}$}. Thus we obtain $|I_j(a,b)|\lesssim \smash{b^{-\frac{j+1}{3}}}$.

 $(ii,\,2)$ Suppose $a\leq -1$, $b\geq 1$, $s\leq 1$ and $\l\geq 1/8$. The case $s\geq 1$ is dealt with later.
Since~${\l\geq 1/8}$, it suffices to prove the inequality for large integer $N$.
By the assumption on~$f$, we have $f_{a,b}'(w)\geq 1+ cw^2$ and \smash{$\big|f_{a,b}^{(\a)}(w)/f_{a,b}'(w)\big|\leq C_{\a}(1+|w|)^{-1}$} for $sw\in \supp \chi$ with a~constant~$C_{\a}$ independent of $s,\l$ for $\a\geq 2$. In fact,
\begin{align*}
&\big|f_{a,b}''(w)\big|=b\l^{-1}s\big|f''(sw)\big|\leq b\l^{-1}s^2\cdot |sw|\lesssim (1+ |w|),\\
&\big|f^{(\a)}_{a,b}(w)\big|=b\l^{-1}s^{\a}\big|f^{(\a)}(sw)\big|\leq b\l^{-1}s^3\big|f^{(\a)}(sw)\big|=O(1)
\end{align*}
for $\a\geq 3$.

Now we set $L=D_w\circ(f_{s,\l}'(w))$. Then the integration by parts yields
\begin{align*}
I_j(a,b)= \l^{-N}s^{j+1}\int_{\R}L^{N_1}\bigl(\chi(sw)w^j\bigr){\rm e}^{{\rm i}\l f_{a,b}(w)}{\rm d}w,
\end{align*}
for $N_1\in\mathbb{N}$. Taking $N_1$ large enough, we have
\begin{align*}
\big|L^{N_1}\bigl(\chi(sw)w^j\bigr)\big|\leq C\bigl(1+w^2\bigr)^{-1}
\end{align*}
with a constant $C>0$ independent of $s$, $\l$ and $w\in \supp \chi$. Here the independence with respect to $s$, $\l$ follows from the estimates for $f_{a,b}(w)$. Thus we have $|I_j(a,b)|=O\bigl(\l^{-N}s^{j+1}\bigr)$.
Taking~${N_1=N+j+\frac{1}{2}}$, we have
\begin{align*}
\l^{-N_1}s^{j+1}=\bigl(|a|^{\frac{3}{2}}b^{-\frac{1}{2}}\bigr)^{-N}\cdot |a|^{-\frac{3}{2}j-\frac{3}{4}}b^{\frac{1}{2}j+\frac{1}{4}} \cdot |a|^{\frac{j+1}{2}}b^{-\frac{j+1}{2}}=\bigl(|a|^{\frac{3}{2}}b^{-\frac{1}{2}}\bigr)^{-N}|a|^{-j-\frac{1}{4}}b^{-\frac{1}{4}}.
\end{align*}

 $(ii,\,3)$ Suppose $a\leq -1$, $b\geq 1$, $s\geq 1$ and $\l\geq 1/8$. To deal with this case, we do not use~\eqref{appI_jscaling} but the definition \eqref{appI_jdef}. Since $a\leq -1$ and $f'(w)<0$, we have $|\pa_w(aw+bf(w))|\geq a$. Thus the integration by parts yields $|I_j(a,b)|\leq C_{N'}|a|^{-N'}$ for all $N'>0$. Now take $N'>0$ large enough~such that \smash{$|a|^{-N'}\leq \bigl(|a|^{\frac{3}{2}}b^{-\frac{1}{2}}\bigr)^{-N}|a|^{-j-\frac{1}{4}}b^{-\frac{1}{4}}$}, which is possible since $s\geq 1$ implies~${|a|\geq b}$. This completes the proof.
\end{proof}

\begin{proof}[Proof of Proposition \ref{Besselasymp}]
Set $a=y-\m$, $b=y$ and $f(w)=\sin w-w$. Then it turns out that $aw+bf(w)=y\sin w-\m w$ and $f$ satisfies the assumption of Lemma \ref{appstphasevai}. We observe
\begin{align*}
&\pa_{\m}=(\pa_{\m} a)\pa_{a}+(\pa_{\m} b)\pa_{b}=-\pa_a,\qquad |a|^{\frac{3}{2}}b^{-\frac{1}{2}}=|\m-y|^{\frac{3}{2}}y^{-\frac{1}{2}},\\
&s\leq 1\Leftrightarrow |\m|\leq 2|y|,\qquad \l\geq 1/8\Leftrightarrow |y-\m|\geq \frac{1}{2}y^{\frac{1}{3}},\\
&\pa_{\m}^{\a}I_0(a,b)=(-{\rm i})^{\a}\int_{\R}{\rm e}^{{\rm i}y \sin w-{\rm i}\m w }w^{\a}\chi_1(w){\rm d}w.
\end{align*}
Then the part (ii) follows from Lemmas \ref{appBeselreduce} and \ref{appstphasevai}\,(ii) when $\m\leq 2y$ holds. The case~${\m\geq 2y}$ follows from the integration by parts in the above expression of $\pa_{\m}^{\a}I_0(a,b)$, where we use $|\pa_y(y \sin w-\m w)|\geq \m-|y|\geq \frac{1}{4}(\m+y)$.

Since $w_{\pm}(a,b)$ are critical points of $aw+bf(w)$, that is, $w_{\pm}(a,b)=\pm\cos^{-1}(\frac{b-a}{b})$, we have
\begin{align*}
aw_{\pm}(a,b)+bf(w_{\pm}(a,b))=\pm yh_1\left(\frac{\m}{y}\right),
\end{align*}
where we recall $h_1(z)=\sqrt{1-z^2}-z\cos^{-1}z$. This proves the part (i) if we set
\begin{align*}
a_{+,y}(\m)=\frac{1}{2\pi}I_{0,+}(a,b)+ {\rm e}^{-yh_1(\frac{\m}{y})} R_0(\m,y),\qquad a_{-,y}(\m)=\frac{1}{2\pi}I_{0,-}(a,b).\tag*{\qed}
\end{align*}\renewcommand{\qed}{}
\end{proof}

\subsection{Asymptotics of the Beta function}\label{subsecasymbeta}

For $m\in [1,\infty)$ and $\n>0$, we define
\begin{align}\label{Fnu1def}
F_{\n,1}(m):=\n^{-1}\frac{\Gamma\bigl(\n+\frac{1}{2}\bigr)}{\sqrt{\pi}\Gamma(\n)}\cdot \frac{\Gamma(m+2\n)}{\Gamma(m+1)\Gamma(2\n)}.
\end{align}
By \cite[Fact 4.8, equation~(4.30)]{BKO}, we have $C_m^{\n}(1)=\frac{\Gamma(m+2\n)}{\Gamma(m+1)\Gamma(2\n)}$ and hence
\begin{align}\label{Fnu1Gegen}
F_{\n,1}(m)=\n^{-1}\frac{\Gamma\bigl(\n+\frac{1}{2}\bigr)C_m^{\n}(1)}{\sqrt{\pi}\Gamma(\n)}\qquad \text{for}\quad m\in\mathbb{N}.
\end{align}
We denote the Beta function by $B(x,y)$. Then it is known that $B(x,y)=\frac{\Gamma(x)\Gamma(y)}{\Gamma(x+y)}$. We may write
\begin{align}\label{Fnu1beta}
F_{\n,1}(m)=\n^{-1}\frac{\Gamma\bigl(\n+\frac{1}{2}\bigr)}{\sqrt{\pi}\Gamma(\n)}\cdot \frac{1}{mB(m,2\n)}.
\end{align}

\begin{Proposition}\label{Fnuestimate}
Let $\a\geq 0$ be an integer and $\n>0$. Then there exists $C_{\n,\a}>0$ such that
\begin{align*}
|\pa_m^{\a}B(m,2\n)|\leq C_{\n,\a}m^{-2\n-\a},\qquad |\pa_m^{\a}F_{\n,1}(m)|\leq C_{\n,\a}m^{2\n-1-\a}
\end{align*}
for all $m\in [1,\infty)$.
\end{Proposition}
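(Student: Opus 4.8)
The plan is to reduce both estimates to the classical integral representation of the Beta function together with an elementary application of the Fa\`a di Bruno formula; the hypotheses $\nu>0$ and $m\ge 1$ are both used. \textbf{Step 1 (integral representation).} Starting from $B(m,2\nu)=\int_0^1 t^{m-1}(1-t)^{2\nu-1}\,{\rm d}t$ and substituting $t={\rm e}^{-s}$, I would write
\[
B(m,2\nu)=\int_0^\infty {\rm e}^{-ms}\bigl(1-{\rm e}^{-s}\bigr)^{2\nu-1}\,{\rm d}s .
\]
For $m\ge 1$ the integrand and its $\alpha$-th $m$-derivative are dominated by the fixed integrable function $s\mapsto s^{\alpha}{\rm e}^{-s}(1-{\rm e}^{-s})^{2\nu-1}$, so one may differentiate under the integral sign to obtain $\partial_m^{\alpha}B(m,2\nu)=\int_0^\infty(-s)^{\alpha}{\rm e}^{-ms}(1-{\rm e}^{-s})^{2\nu-1}\,{\rm d}s$.

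\textbf{Step 2 (two-sided bounds for $B$).} Using $s/2\le 1-{\rm e}^{-s}\le s$ on $(0,1]$ and $1-{\rm e}^{-1}\le 1-{\rm e}^{-s}\le 1$ on $[1,\infty)$, one gets $(1-{\rm e}^{-s})^{2\nu-1}\le C_\nu s^{2\nu-1}$ on $(0,1]$ and $(1-{\rm e}^{-s})^{2\nu-1}\le C_\nu$ on $[1,\infty)$. Splitting the integral at $s=1$ yields
\[
|\partial_m^{\alpha}B(m,2\nu)|\le C_\nu\int_0^\infty s^{2\nu-1+\alpha}{\rm e}^{-ms}\,{\rm d}s+C_{\nu,\alpha}{\rm e}^{-m/2}=C_\nu\Gamma(2\nu+\alpha)m^{-2\nu-\alpha}+C_{\nu,\alpha}{\rm e}^{-m/2},
\]
and since ${\rm e}^{-m/2}\le C_{\nu,\alpha}m^{-2\nu-\alpha}$ for $m\ge 1$, the first asserted inequality follows. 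Restricting the integral to $s\in(0,1/m]$ and using $1-{\rm e}^{-s}\ge s/2$ there gives in the same way the lower bound $B(m,2\nu)\ge\kappa_\nu m^{-2\nu}$ for $m\ge 1$, which is needed below.

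\textbf{Step 3 (from $B$ to $F_{\nu,1}$).} By \eqref{Fnu1beta} we have $F_{\nu,1}(m)=d_\nu\,\phi(m)^{-1}$ with $\phi(m):=mB(m,2\nu)$ and $d_\nu:=\nu^{-1}\Gamma(\nu+\tfrac12)/(\sqrt\pi\,\Gamma(\nu))$. Step 2 gives $\phi(m)\ge\kappa_\nu m^{1-2\nu}$ and, from $\phi^{(\alpha)}(m)=mB^{(\alpha)}(m,2\nu)+\alpha B^{(\alpha-1)}(m,2\nu)$, the bound $|\phi^{(\alpha)}(m)|\le C_{\nu,\alpha}m^{1-2\nu-\alpha}$ for $m\ge 1$. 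Applying Fa\`a di Bruno's formula to the composition of $x\mapsto 1/x$ with $\phi$, $F_{\nu,1}^{(\alpha)}(m)$ is a finite linear combination of terms $\phi(m)^{-(j+1)}\prod_{i\ge 1}\bigl(\phi^{(i)}(m)\bigr)^{c_i}$ with $c_i\ge 0$, $\sum_i i c_i=\alpha$ and $\sum_i c_i=j$; each such term is bounded by a constant times $m^{(2\nu-1)(j+1)}\cdot m^{j(1-2\nu)-\alpha}=m^{2\nu-1-\alpha}$, so summing the finitely many terms gives the second asserted inequality.

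The argument is essentially routine; the two points requiring care are the justification of differentiation under the integral sign in Step 1, and the power-counting in Step 3, where one must check that the exponent of $m$ in each Fa\`a di Bruno term collapses to exactly $2\nu-1-\alpha$ (which it does precisely because $\sum_i i c_i=\alpha$ and $\sum_i c_i=j$). The lower bound $B(m,2\nu)\ge\kappa_\nu m^{-2\nu}$ from Step 2, used to control $\phi(m)^{-(j+1)}$, is where the hypothesis $\nu>0$ genuinely enters.
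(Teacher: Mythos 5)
Your proof is correct, and for the key estimate on $\pa_m^{\a}B(m,2\n)$ it takes a genuinely different and more elementary route than the paper. The paper keeps the representation $B(m,2\n)=\int_0^1 t^{m-1}(1-t)^{2\n-1}\,{\rm d}t$, writes $t^{m-1}={\rm e}^{{\rm i}(m-1)\tilde f(t)}$ with the complex phase $\tilde f(t)=-{\rm i}\log t$, splits off the region $t\le 1/2$ (exponentially small) and treats the piece near $t=1$ with its non-stationary phase lemma with singular amplitude (Lemma~\ref{singularasym}, applied with $\l=m$, $\m=2\n-1$ after the change of variable $x=1-t$); the passage from $B$ to $F_{\n,1}$ is then only sketched, resting on Stirling's asymptotics $B(m,2\n)\sim\Gamma(2\n)m^{-2\n}$ together with \eqref{Fnu1beta}. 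Your substitution $t={\rm e}^{-s}$ converts the same integral into a genuine Laplace integral $\int_0^{\infty}{\rm e}^{-ms}\bigl(1-{\rm e}^{-s}\bigr)^{2\n-1}\,{\rm d}s$, so after differentiating under the integral sign the bound $C_{\n,\a}m^{-2\n-\a}$ follows from pointwise comparison with $s^{2\n-1+\a}{\rm e}^{-ms}$ and $\Gamma(2\n+\a)m^{-2\n-\a}$, with no oscillatory-integral machinery; in effect you exploit the fact that the paper's phase has nonnegative imaginary part, so the integral was never truly oscillatory. Your Step 3 also makes explicit, via the lower bound $B(m,2\n)\gtrsim m^{-2\n}$, the Leibniz bound $|\phi^{(\a)}(m)|\lesssim m^{1-2\n-\a}$ for $\phi(m)=mB(m,2\n)$, and Fa\`a di Bruno for $1/\phi$, the reciprocal step that the paper leaves implicit, and your power counting $(2\n-1)(j+1)+(1-2\n)j-\a=2\n-1-\a$ is right. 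One small repair: in your lower bound on $(0,1/m]$ the inequality to invoke depends on the sign of $2\n-1$ (use $1-{\rm e}^{-s}\ge s/2$ when $2\n\ge 1$, but $1-{\rm e}^{-s}\le s$ when $0<\n<1/2$, since the negative exponent reverses the inequality); either way $B(m,2\n)\ge\kappa_{\n}m^{-2\n}$ holds for $m\ge 1$, so this is a one-line fix rather than a gap.
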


\begin{proof}First, we note that it suffices to prove these estimates for sufficiently large $m$. As is well known (essentially due to Stirling's formula), we have $B(m,2\n)\sim \Gamma(2\n)m^{-2\n}$ as $m\to \infty$.
This implies that the estimates for $F_{\n,1}(m)$ follow from the estimates for $B(m,2\n)$ and \eqref{Fnu1beta}. The case $\a=0$ follows from $B(m,2\n)\sim \Gamma(2\n)m^{-2\n}$ as $m\to \infty$. Hence, in the following, we consider the estimates for $\a\geq 1$.

Let $\g\in C^{\infty}(\R;[0,1])$ satisfy $\g(t)=1$ for $t\geq 1/2$ and $\g(t)=0$ for $t\leq \frac{1}{4}$.
Since $B(m,2\n)=\int_0^1t^{m-1}(1-t)^{2\n-1}{\rm d}t$, we have
\begin{align*}
\pa_m^{\a}B(m,2\n)=\int_0^1t^{m-1}g_{\a,1}(t){\rm d}t+\int_0^1t^{m-1}g_{\a,2}(t){\rm d}t,
\end{align*}
where we set $g_{\a,1}(t)=(\log t)^{\a}(1-t)^{2\n-1}\g(t)$ and $g_{\a,2}(t)=(\log t)^{\a}(1-t)^{2\n-1}(1-\g(t))$.

First, we deal with the second term. Since $\supp g_{\a,2}\subset \{t\leq 1/2\}$ and since $|(\log t)^{\a}|\lesssim |t|^{-1}$ for $0<t\leq 1/2$, we have
\begin{align*}
\left|\int_0^1t^{m-1}g_{\a,2}(t){\rm d}t\right|\lesssim \int_0^{\frac{1}{2}}t^{m-2}{\rm d}t\lesssim 2^{-m}\lesssim m^{-2\n-\a}
\end{align*}
for sufficiently large $m\geq 1$.

Next, we consider the first term. We note
\begin{align*}
\big|\pa_{t}^{\b}g_{\a,1}(t)\big|\leq C_{\a}|1-t|^{2\n-1+\a-\b}\qquad\text{for}\quad t\in [0,1].
\end{align*}
Now we write
\begin{align*}
t^{m-1}={\rm e}^{(m-1)(\log t)}={\rm e}^{{\rm i}(m-1)\tilde{f}(t)},\qquad \tilde{f}(t)=-{\rm i}\log t.
\end{align*}
Then $\tilde{f}'(t)=-{\rm i}/t\neq 0$ and $\im \tilde{f}(t)\geq 0$ hold for $t\in \supp \g\cap (-\infty,1]$.
Now it follows from Lemma \ref{singularasym} with $\l=m$, $\m=2\n-1$ and the change of variable $x=1-t$ that
\begin{align*}
\left|\int_0^1t^{m-1}g_{\a,1}(t){\rm d}t\right|\lesssim m^{-2\n-\a}
\end{align*}
holds for sufficiently large $m\geq 1$. This completes the proof.
\end{proof}

\subsection{Asymptotics of the Gegenbauer polynomials}\label{appasymgegensubsec}

In this appendix, we prove Proposition \ref{Gegenasymp} using non-stationary phase theorem. The case~${\n=0}$ is easy to deal with (see the proof of Proposition \ref{Gegenasymp} below) and hence we focus on the case~${\n>0}$.

By \cite[equation~(4.30)]{BKO} (see also \cite[Section 10]{IJ}) and \eqref{Fnu1Gegen},
\begin{align}\label{Gegenintrep}
\n^{-1}C_m^{\n}(\cos\f)=&F_{\n,1}(m)\int_{-1}^{1}(\cos\f+{\rm i}(\sin\f)u)^m\bigl(1-u^2\bigr)^{\n-1}{\rm d}u
\end{align}
for $\n>0$, $m\in\mathbb{N}^*$ and $\f\in [0,\pi]$, where $F_{\n,1}$ is defined in \eqref{Fnu1def}. To get an asymptotics of~$C_m^{\n}(\cos\f)$, we extend $\mathbb{N}^*\ni m\mapsto C_m^{\n}(\cos\f)$ to a function on $m\in [1,\infty)$ (up to a negligible term). However, since the integrant $(\cos\f+{\rm i}(\sin\f)u)^m$ is not a single-valued function, we have to do it a bit carefully.

First, we deal with the case $\f\in \bigl(\frac{\pi}{4}, \frac{3}{4}\pi\bigr)$.
We consider two branches of logarithmic smooth functions $\log^+z\colon\mathbb{C}\setminus \{{\rm i}y\mid y\leq 0\}\to \mathbb{C}$ and $\log^-z\colon\mathbb{C}\setminus \{{\rm i}y\mid y\geq 0\}\to \mathbb{C}$ such that $\log^+(\cos \f+{\rm i}\sin \f)={\rm i}\f$ and $\log^-(\cos\f-{\rm i}\sin\f))=-{\rm i}\f$ for all $\f\in [0,\pi]$. We define smooth functions~$f_{\pm}$~by
\begin{align*}
f_{\pm}(\f,u)=-{\rm i}\log^{\pm}(\cos\f+{\rm i}(\sin\f)u)\mp \f
\end{align*}
for $(\f,u)$ such that $\cos\f+{\rm i}(\sin\f)u$ belongs to the domain of $\log^{\pm}$. We note that
$([0,\pi]\times \R_{\pm})\cup \bigl(\bigl(v[0,\frac{\pi}{4}\bigr]\cup \bigl[\frac{3}{4}\pi,\pi\bigr]\bigr)\times \bigl[-\frac{1}{2},\frac{1}{2}\bigr] \bigr)$ is included in the domain of $f_{\pm}$.
Then ${\rm e}^{{\rm i}mf_{\pm}(\f,u)}=(\cos\f+{\rm i}(\sin\f)u)^m{\rm e}^{\mp {\rm i}m\f}$ for all $m\in\mathbb{N}^*$ and $\pm u>0$.
Now let $\chi_{\pm},\chi_0\in C^{\infty}(\R;[0,1])$ such that $\chi_++\chi_0+\chi_-=1$ on $\R$, $\supp\chi_0\subset \bigl[-\frac{1}{2},\frac{1}{2}\bigr]$, $\supp \chi_+\subset \bigl[\frac{1}{4},\infty\bigr)$ and $\supp\chi_-\subset \bigl(-\infty,-\frac{1}{4}\bigr]$. We define
\begin{align*}
&H_{\n,\pm}(m,\f):=\int_{-1}^1{\rm e}^{{\rm i}mf_{\pm}(\f,u)}\chi_{\pm}(u)\bigl(1-u^2\bigr)^{\n-1}{\rm d}u,\\
&E_{\n}(m,\f):=\int_{-1}^1(\cos \f+{\rm i}(\sin\f)u)^m\chi_{0}(u)\bigl(1-u^2\bigr)^{\n-1}{\rm d}u,
\end{align*}
where $H_{\n,\pm}(m,\f)$ is defined for all $m\in [1,\infty)$ although $E_{\n}(m,\f)$ is defined only for $m\in \mathbb{N}^*$ whenever $\f\in \bigl(\frac{\pi}{4}, \frac{3}{4}\pi\bigr)$ (note that $\cos\f+{\rm i}(\sin \f)u$ may be zero for $u\in \supp \chi_0$). By these construction and \eqref{Gegenintrep}, we have
\begin{align}\label{Gegendecom}
\n^{-1}C_m^{\n}(\cos\f)=&F_{\n,1}(m)\bigl({\rm e}^{{\rm i}m\f}H_{\n,+}(m,\f)+{\rm e}^{-{\rm i}m\f}H_{\n,-}(m,\f)+E_{\n}(m,\f) \bigr)
\end{align}
for $m\in\mathbb{N}^*$ and $\f\in \bigl(\frac{\pi}{4}, \frac{3}{4}\pi\bigr)$.

Next, we consider the case $\f\in \bigl[0,\frac{\pi}{4}\bigr]\cup \bigl[\frac{3}{4}\pi,\pi\bigr]$. We define
\begin{gather*}
E_{\n}'(m,\f):=\int_{-1}^1{\rm e}^{{\rm i}mf_{+}(\f,u)}\chi_{0}(u)\bigl(1-u^2\bigr)^{\n-1}{\rm d}u,\qquad m\in [1,\infty),\\ \f\in \left[0,\frac{\pi}{4}\right]\cup \left[\frac{3}{4}\pi,\pi\right],
\end{gather*}
where we note that $\cos \f+{\rm i}(\sin\f)u\in \mathbb{C}\setminus \{{\rm i}y\mid y\leq 0\}$ whenever $\f\in \bigl[0,\frac{\pi}{4}\bigr]\cup \bigl[\frac{3}{4}\pi,\pi\bigr]$ and $u\in \supp \chi_0$. Then
\begin{align}\label{Gegendecom2}
\n^{-1}C_m^{\n}(\cos\f)=&F_{\n,1}(m)\bigl({\rm e}^{{\rm i}m\f}H_{\n,+}(m,\f)+{\rm e}^{-{\rm i}m\f}H_{\n,-}(m,\f)+{\rm e}^{{\rm i}m\f}E_{\n}'(m,\f) \bigr)
\end{align}
holds for $m\in\mathbb{N}^*$ and $\f\in \bigl[0,\frac{\pi}{4}\bigr]\cup \bigl[\frac{3}{4}\pi,\pi\bigr]$.

Now we show that $H_{\n,\pm}$, $E_{\n}$ and $E_{\n}'$ satisfy symbol-type estimates. The basic idea of the proof for $H_{\n,\pm}$ and $E_{\n}$ is to use the non-stationary phase theorem with a singular amplitude (see Lemma~\ref{singularasym}). To do this, we observe that our phase function $f_{\pm}(\f,u)$ satisfies $|f_{\pm}'(\f,u)|\gtrsim |\sin \f|$ on~${u\in \supp \chi_{\pm}}$ (see the proof below). Although $\pa_u f_{\pm}(\f,u)=0$ when $\f=0,\pi$, we consider a~large parameter $\l=m\sin \f$ rather than $m$ and can apply Lemma \ref{singularasym}.

\begin{Lemma}\label{Gegenlemma}
Let $\a\geq 0$ be an integer and $\n,N>0$.
\begin{itemize}\itemsep=0pt
\item[$(i)$] There exists $C_{N,\n}>0$ such that $|E_{\n}(m,\f)|\leq C_{N,\n}m^{-N}$ for $m\in\mathbb{N}^*$ and $\f\in \bigl(\frac{\pi}{4}, \frac{3}{4}\pi\bigr)$.

\item[$(ii)$] There exists $C_{\a,N,\n}>0$ such that $|\pa_{m}^{\a}E_{\n}'(m,\f)|\leq C_{\a,N,\n}m^{-\a}(1+m\sin\f)^{-N}$ for $m\in[1,\infty)$ and $\f\in \bigl[0,\frac{\pi}{4}\bigr]\cup \bigl[\frac{3}{4}\pi,\pi\bigr]$.

\item[$(iii)$] There exists $C_{\a,\n}>0$ such that $|\pa_{m}^{\a}H_{\n,\pm}(m,\f)|\leq C_{\a,\n}m^{-\a}(1+m\sin\f)^{-\n}$ for $m\in[1,\infty)$ and $\f\in [0,\pi]$.
\end{itemize}
\end{Lemma}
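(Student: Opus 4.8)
The plan is to prove the three parts separately: $(i)$ is a crude exponential-decay estimate, while $(ii)$ and $(iii)$ are non-stationary phase arguments (with a singular amplitude in $(iii)$), whose only real point is uniformity in $\f$, including the degenerate regime $\sin\f\to0$. For $(i)$: on $\supp\chi_0\subset[-1/2,1/2]$ and $\f\in(\pi/4,3\pi/4)$ one has $|\cos\f+{\rm i}(\sin\f)u|^2=1-\sin^2\f(1-u^2)\le1-(3/4)\sin^2\f\le5/8$, hence $|(\cos\f+{\rm i}(\sin\f)u)^m|\le(5/8)^{m/2}$; since $\chi_0(u)(1-u^2)^{\n-1}$ is integrable, $|E_{\n}(m,\f)|\lesssim(5/8)^{m/2}\lesssim_N m^{-N}$.

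For $(ii)$ and $(iii)$ the common device is the rescaled large parameter $\l:=m\sin\f$ and the rescaled phase $g_{\pm}(\f,u):=f_{\pm}(\f,u)/\sin\f$; when $\sin\f=0$, i.e.\ $\f\in\{0,\pi\}$, one has $f_{\pm}\equiv0$ on the relevant supports, so $E_{\n}'$ and $H_{\n,\pm}$ are constant in $m$ and the claims are trivial. Since $(\log^{\pm})'(z)=1/z$, we have $\pa_uf_{\pm}(\f,u)=\sin\f/(\cos\f+{\rm i}(\sin\f)u)$, hence $\pa_ug_{\pm}(\f,u)=1/(\cos\f+{\rm i}(\sin\f)u)$ for both signs, with $|\pa_ug_{\pm}|=(\cos^2\f+u^2\sin^2\f)^{-1/2}$. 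On $\supp\chi_0$ (where $|\cos\f|\ge1/\sqrt2$) for $(ii)$, and on $\supp\chi_{\pm}\cap[-1,1]$ (where $|u|\ge1/4$) for $(iii)$, the quantity $\cos^2\f+u^2\sin^2\f$ lies between an absolute positive constant and $1$; hence $|\pa_ug_{\pm}|\ge1$, all $u$-derivatives of $g_{\pm}(\f,\cdot)$ are bounded uniformly in $\f$ (the phase extending smoothly down to $\f\in\{0,\pi\}$), and $\im g_{\pm}=-(\sin\f)^{-1}\log|\cos\f+{\rm i}(\sin\f)u|\ge0$. For $(ii)$, the amplitude $A(u):=\chi_0(u)(1-u^2)^{\n-1}$ is smooth and compactly supported and $E_{\n}'(m,\f)=\tilde E(m\sin\f)$ with $\tilde E(\l):=\int A(u){\rm e}^{{\rm i}\l g_{+}(\f,u)}\,{\rm d}u$; integration by parts with $L=({\rm i}\l\,\pa_ug_{+})^{-1}\pa_u$ (Lemma~\ref{Leibnizrule}, uniformly in $\f$) gives $|\tilde E^{(\a)}(\l)|\lesssim_N\l^{-N-\a}$ for $\l\gtrsim1$, while trivially $|\tilde E^{(\a)}(\l)|\lesssim1$; since $\pa_m^{\a}E_{\n}'(m,\f)=(\sin\f)^{\a}\tilde E^{(\a)}(m\sin\f)$, distinguishing $m\sin\f\le1$ from $m\sin\f>1$ yields $|\pa_m^{\a}E_{\n}'(m,\f)|\lesssim m^{-\a}(1+m\sin\f)^{-N}$.

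For $(iii)$ consider $H_{\n,+}$; the $-$ case is symmetric under $u\mapsto-u$. The substitution $x=1-u$ turns the support $u\in[1/4,1]$ into $x\in[0,3/4]$, the weight $(1-u^2)^{\n-1}$ into $x^{\n-1}\tilde\chi(x)$ with $\tilde\chi(x):=\chi_{+}(1-x)(2-x)^{\n-1}$ smooth and $\tilde\chi(0)\ne0$, so that $|\pa_x^{\a}(x^{\n-1}\tilde\chi)|\lesssim x^{\n-1-\a}$, and gives $H_{\n,+}(m,\f)=\tilde H(m\sin\f)$ with $\tilde H(\l):=\int_0^{\infty}x^{\n-1}\tilde\chi(x){\rm e}^{{\rm i}\l g_{+}(\f,1-x)}\,{\rm d}x$. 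The phase $x\mapsto g_{+}(\f,1-x)$ has first derivative of modulus $\ge1$ on the support, non-negative imaginary part, and value $0$ at $x=0$ (because $f_{+}(\f,1)=0$), with $x$-derivative bounds uniform in $\f$; thus the uniform version of Lemma~\ref{singularasym} (with $\m=\n-1>-1$ and large parameter $\l$) gives $|\tilde H^{(\a)}(\l)|\lesssim\l^{-\n-\a}$ for $\l\gtrsim1$, while $|\tilde H^{(\a)}(\l)|\lesssim1$ for bounded $\l$ (as $g_{+}$ is bounded). Since $\pa_m^{\a}H_{\n,+}(m,\f)=(\sin\f)^{\a}\tilde H^{(\a)}(m\sin\f)$, this is $\lesssim(\sin\f)^{\a}(m\sin\f)^{-\n-\a}=m^{-\a}(m\sin\f)^{-\n}$ when $m\sin\f\gtrsim1$, and $\lesssim(\sin\f)^{\a}\le m^{-\a}$ when $m\sin\f\lesssim1$; that is, $|\pa_m^{\a}H_{\n,+}(m,\f)|\lesssim m^{-\a}(1+m\sin\f)^{-\n}$.

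The main obstacle is the uniformity in $\f$: one must use the rescaled large parameter $\l=m\sin\f$ rather than $m$, and then verify that the rescaled phases $f_{\pm}/\sin\f$ extend smoothly, with uniformly bounded $u$-derivatives and first derivative bounded below, even as $\f\to0,\pi$. Once this is set up, $(ii)$ and $(iii)$ are routine applications of (the uniform forms of) the non-stationary phase theorem and of Lemma~\ref{singularasym} with the endpoint-singular amplitude, respectively; the passage from the $\l^{-N}$ (resp.\ $\l^{-\n-\a}$) decay to the stated $(1+m\sin\f)^{-N}$ (resp.\ $(1+m\sin\f)^{-\n}$) bound is the elementary interpolation with the trivial bound performed above.
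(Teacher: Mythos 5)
Your proposal is correct and follows essentially the same route as the paper: part (i) by the same $(5/8)^{m/2}$ bound, and parts (ii), (iii) by the rescaled parameter $\l=m\sin\f$, the phase $g_{\pm}=f_{\pm}/\sin\f$ with $\im g_{\pm}\ge 0$ and $|\pa_u g_{\pm}|\gtrsim 1$, non-stationary phase / Lemma~\ref{singularasym} with the endpoint-singular amplitude, and interpolation with the trivial bound. The only cosmetic difference is that you organize the $m$-derivatives via the chain rule and the $\l$-derivative estimates \eqref{singbmues}, whereas the paper brings down $f_{+}^{\a}$ and uses $f_{+}=O(1-u)$ together with $\sin\f(1+m\sin\f)^{-1}\le m^{-1}$ — the same computation in different packaging.
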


\begin{proof}
 $(i)$ Since \smash{$|\cos\f+{\rm i}(\sin \f)u|\leq \sqrt{\frac{5}{8}}$} for $u\in \supp \chi_0$ and $\f\in \bigl(\frac{\pi}{4}, \frac{3}{4}\pi\bigr)$, we have
 \[
|\cos\f+{\rm i}(\sin \f)u|^m\leq \left(\frac{5}{8}\right)^{\frac{m}{2}}\lesssim m^{-N}
\]
 for $m\in\mathbb{N}^*$. The estimate for $E_{\n}$ directly follows from this inequality.

 $(ii)$ Setting $\g(u):=\chi_0(u)\bigl(1-u^2\bigr)^{\n-1}$, we write $E_{\n}'(m,\f)=\int_{\R}{\rm e}^{{\rm i}mf_{+}(\f,u)}\g(u){\rm d}u$.

First, we consider the case $\a=0$ by using the integration by parts.
Since $\log^+(\cos \f+{\rm i}\sin \f)={\rm i}\f$, we have
\begin{align}\label{f_+rep}
f_+(\f,u)={\rm i}\int_u^1\frac{{\rm d}}{{\rm d}r}\log^+(\cos \f+{\rm i}(\sin \f) r){\rm d}r=-\int_u^1\frac{\sin\f(\cos \f-{\rm i}(\sin \f) r)}{\cos^2 \f+r^2\sin^2 \f }{\rm d}r
\end{align}
and hence \smash{$\im f_+(\f,u)=\int_u^1\frac{(\sin^2 \f) r}{\cos^2 \f+r^2\sin^2 \f }{\rm d}r$}. This implies $\im f_+(\f,u)\geq 0$ for $u\in [-1,1]$. In fact, this is trivial for $u\geq 0$. For $u\leq 0$, we write
\begin{align*}
\int_u^1\frac{\bigl(\sin^2 \f\bigr) r}{\cos^2 \f+r^2\sin^2 \f }{\rm d}r={}&\int_0^1\frac{\bigl(\sin^2 \f\bigr) r}{\cos^2 \f+r^2\sin^2 \f }{\rm d}r+\int_u^0\frac{\bigl(\sin^2 \f\bigr) r}{\cos^2 \f+r^2\sin^2 \f }{\rm d}r\\
={}&\int_{-u}^1\frac{\bigl(\sin^2 \f\bigr) r}{\cos^2 \f+r^2\sin^2 \f }{\rm d}r,
\end{align*}
 which is non-negative.

Now we set $L=\pa_u\circ ({\rm i}\pa_uf_+(\f,u))^{-1}$. By integrating by parts, we have
\[
|E_{\n}'(m,\f)|=m^{-N}\left|\int_{\R}{\rm e}^{{\rm i}mf_{+}(\f,u)} L^N\g(u){\rm d}u\right|\leq m^{-N}\int_{\R}\big| L^N\g(u)\big|{\rm d}u
\]
 by virtue of $\im f_+(\f,u)\geq 0$. Since $\pa_u^{\a}f_+(\f,u)=(-1)^{\a-1}(\a-1)!(\sin\f)^{\a}(\cos \f+{\rm i}u\sin \f )^{-\a}$, for each $\a\geq 2$, we obtain $|\pa_uf_+(\f,u)|^{-1}\lesssim |\sin \f|^{-1}$ and $|\pa_u^{\a}f_+(\f,u)||\pa_uf_+(\f,u)|^{-1} \lesssim 1$ for $u\in [-1,1]$ and $\f\in \bigl[0,\frac{\pi}{4}\bigr]\cup \bigl[\frac{3}{4}\pi,\pi\bigr]$. By Lemma \ref{Leibnizrule}, we conclude $\big| L^N\g(u)\big|\lesssim |\sin\f|^{-N}$ and hence $|E_{\n}'(m,\f)|\lesssim m^{-N}|\sin\f|^{-N}$ for $\f\in \bigl[0,\frac{\pi}{4}\bigr]\cup \bigl[\frac{3}{4}\pi,\pi\bigr]$ (we note that $\g$ is compactly supported). On the other hand, the inequality $\im f_+(\f,u)\geq 0$ also implies $|E_{\n}'(m,\f)|\lesssim 1$. Combining these estimates, we have proved (ii) for $\a=0$.

Next, we consider the case $\a\geq 1$. We note
\begin{align*}
\pa_m^{\a}E'_{\n}(m,\f)={\rm i}^{\a}\int_{\R}{\rm e}^{{\rm i}mf_{+}(\f,u)}f_{+}(\f,u)^{\a}\g(u)du.
\end{align*}
As in the case of $\a=0$, we can deduce $\big|L^N(f_{+}(\f,u)^{\a}\g(u))\big|\lesssim |{\sin\f}|^{\a-N}$, which leads to the part (ii) for $\a\geq 1$ since $f_{+}(0,u)=f_{+}(\pi,u)=0$.

 $(iii)$ We deal with the case $+$ only. There exists $c>0$ such that $|{\cos\f+{\rm i}u\sin \f}|\geq c$ for~${u\in \supp\chi_+}$ and $\f\in [0,\pi]$.
 Since \smash{$\im f_+(\f,u)=\int_u^1\frac{(\sin^2 \f) r}{\cos^2 \f+r^2\sin^2 \f }{\rm d}r$}, which is proved above, we have $\im f_+(\f,u)\geq 0$ for $u\in \supp \chi_+$ and $\f\in [0,\pi]$. Moreover, we have $f_+(\f,1)=0$.

Now we prove $(iii)$ for $\a=0$. Set $m'=m\sin \f$ and $g(\f,u)=(\sin \f)^{-1}f_+(\f,u)$. Then we write \smash{$H_{\n,+}(m,\f)=\int_{-1}^1{\rm e}^{{\rm i}m'g(\f,u)}\chi_{+}(u)\bigl(1-u^2\bigr)^{\n-1}{\rm d}u$}. Since
\[
\pa_u^{\a}g(\f,u)=(-1)^{\a-1}(\a-1)!(\sin\f)^{\a-1}(\cos \f+{\rm i}u\sin \f )^{-\a}
\]
 for $\a\geq 1$ and since $|{\cos \f+{\rm i}u\sin \f}|\in \bigl[\frac{1}{\sqrt{2}},1\bigr]$ for $\f\in \bigl[0,\frac{\pi}{4}\bigr]\cup \bigl[\frac{3}{4}\pi,\pi\bigr]$, we have
\begin{align*}
|\pa_ug(\f,u)|\geq c,\qquad |\pa_u^{\a}g(\f,u)| |\pa_ug(\f,u)|^{-1}\leq C_{\a}'
\end{align*}
with a constant $c>0$ and $C_{\a}'>0$. By Lemma \ref{singularasym} with $\l=m\sin \f$ and the change of variable~${x=1-u}$, we obtain $|H_{\n,+}(m,\f)|\lesssim (m')^{-\n}$ for $m'=m\sin\f\geq 1$. On the other hand, it follows from $\im f_+(\f,u)\geq 0$ that $|H_{\n,+}(m,\f)|\lesssim 1$. Combining these estimates, we obtain~${|H_{\n,+}(m,\f)|\lesssim (1+m')^{-\n}=(1+m\sin\f)^{-\n}}$ for $m\sin \f\geq 1$. For $m\sin \f\leq 1$, this estimate is easy to prove.

Finally, we consider the case $\a\geq 1$. We observe that
\begin{align*}
\pa_m^{\a}H_{\n,+}(m,\f)=i^{\a}\int_{\R}{\rm e}^{{\rm i}m'g(\f,u)}f_{+}(\f,u)^{\a}\bigl(1-u^2\bigr)^{\n-1}\chi_+(u){\rm d}u
\end{align*}
and \smash{$\big|\pa_u^{\b}f_{+}(\f,u)^{\a}\big|\lesssim (\sin\f)^{\a}(1-u)^{\max(\a-\b,0)}$} since $f_+(\f,u)=O((1-u))$ by \eqref{f_+rep}. Thus, a~similar argument as the case $\a=0$ shows that $|\pa_m^{\a}H_{\n,+}(m,\f)|\lesssim (\sin\f)^{\a}(1+m\sin\f)^{-\n-\a}$. Using an inequality $\sin\f(1+m\sin\f)^{-1}\leq m^{-1}$, we obtain $(iii)$ for $\a\geq 1$.
\end{proof}

\begin{proof}[Proof of Proposition \ref{Gegenasymp}]
First, we consider the case $\n=0$. From \cite[equation~(4.28)]{BKO}, we have
\begin{align*}
\lim_{\n\to 0}\n^{-1}C_m^{\n}(\cos\f)=\frac{2\cos (m\f)}{m}=\frac{{\rm e}^{{\rm i}m\f}+{\rm e}^{-{\rm i}m\f}}{m}.
\end{align*}
Thus, we can take $g_{\n,\pm}(m,\f)=1/m$ and $r(m,\f)=0$.

Next, we consider the case $\n>0$. For $\f\in \bigl(\frac{\pi}{4}, \frac{3}{4}\pi\bigr)$, we set $g_{\n,\pm}(m,\f)=F_{\n,1}(m)H_{\n,\pm}(m,\f)$ and $r(m,\f)=F_{\n,1}(m)E_{\n}(m,\f)$. For $\f\in \bigl[0,\frac{\pi}{4}\bigr]\cup \bigl[\frac{3}{4}\pi,\pi\bigr]$, we set
\begin{align*}
g_{\n,+}(m,\f)=F_{\n,1}(m)(H_{\n,+}(m,\f)+E_{\n}'(m,\f)),\qquad g_{\n,-}(m,\f)=F_{\n,1}(m)H_{\n,-}(m,\f)
\end{align*}
and $r(m,\f)=0$. Then Proposition \ref{Gegenasymp} directly follows from the identities \eqref{Gegendecom}, \eqref{Gegendecom2}, Proposition \ref{Fnuestimate} and Lemma \ref{Gegenlemma}.
\end{proof}

\section{Finite time Strichartz estimates}\label{finitetime}

Here, we show that the Strichartz estimates hold for finite time $T$ assuming that they hold for each time $t$ with $|t|\leq T_0$.
The following argument is more or less well known.
For simplicity, assuming $T_0<T<2T_0$, \eqref{homStr} and \eqref{inhomStr} hold for $T_0$ and $T-T_0$, we shall show that \eqref{homStr} and~\eqref{inhomStr} hold for $T$.

Set $U(t)={\rm e}^{-{\rm i}tH_{k,a}}$.
Since $U(t+s)=U(t)U(s)$, $[-T,T]=[-T_0,T_0]\cup [-T,-T_0]\cup [T_0,T]$ and~${T<2T_0}$,
\begin{align*}
\|U(t)u_0\|_{L^p([-T,T];L^q)}^p={}&\|U(t)u_0\|_{L^p([-T_0,T_0];L^q)}^p+\|U(t)U(-T_0)u_0\|_{L^p([-T+T_0,0];L^q)}^p\\
&+\|U(t)U(T_0)u_0\|_{L^p([0,T-T_0];L^q)}^p\\
\lesssim{}&\|u_0\|_{L^2}^p+\|U(-T_0)u_0\|_{L^2}^p+\|U(T_0)u_0\|_{L^2}^p=3\|u_0\|_{L^2}^p,
\end{align*}
where we use the fact that $U(t)$ is unitary in the last line. This proves \eqref{homStr} for $T$.

Set $\Gamma f(t)=\int_0^tU(t-s)f(s){\rm d}s$.
To see that \eqref{inhomStr} holds for $T$, it suffices to prove
\begin{align}\label{appinhomaim}
\|\Gamma f\|_{L^{p_1}([-T,-T_0]\cup [T_0,T];L^{q_2})}\lesssim \|f\|_{L^{p_2^*}([-T,T];L^{q_2^*})}.
\end{align}
We firstly observe
\begin{align}\label{apphomdual}
\left\|\int_0^{T_0}U(-s)f(s){\rm d}s \right\|_{L^2}\leq C\|f\|_{L^{p_2^*}([0,T_0];L^{q_2^*})}
\end{align}
due to the duality of \smash{$\|U(t)u_0\|_{L^p([0,T_0];L^q)}\lesssim\|u_0\|_{L^2}$} which in turn follows from \eqref{homStr} for $T_0$. Setting \smash{$\Gamma_1f(t)=\int_0^{T_0}U(t-s)f(s){\rm d}s$} and $\Gamma_2f(t)=\int_{T_0}^tU(t-s)f(s){\rm d}s$, we have $\Gamma=\Gamma_1+\Gamma_2$. Since \smash{$\Gamma_1 f(t)=U(t)\int_0^{T_0}U(-s)f(s){\rm d}s$}, the homogeneous estimate \smash{$\|U(t)u_0\|_{L^p([-T,T];L^q)}\lesssim\|u_0\|_{L^2}$} implies
\begin{align*}
\|\Gamma_1 f\|_{L^{p_1}([T_0,T];L^{q_2})}\lesssim\left\|\int_0^{T_0}U(-s)f(s){\rm d}s \right\|_{L^2}\underbrace{\lesssim}_{\eqref{apphomdual}} \|f\|_{L^{p_2^*}([0,T_0];L^{q_2^*})}.
\end{align*}
On the other hand, setting $g(t)=f(t+T_0)$, we have $\Gamma_2f(t)=\int_0^{t-T_0}U(t-T_0-s)g(s){\rm d}s$. Using~\eqref{inhomStr} for $T-T_0(<T_0)$, we obtain
\begin{align*}
\|\Gamma_2f\|_{L^{p_1}([T_0,T];L^{q_1})}=\left\|\int_0^{t}U(t-s)g(s){\rm d}s\right\|_{L^{p_1}([0,T-T_0];L^{q_1})}\lesssim \|g\|_{L^{p_2^*}([-T+T_0,T-T_0];L^{q_2^*})}.
\end{align*}
Combining them, we conclude \smash{$\|\Gamma f\|_{L^{p_1}([T_0,T];L^{q_1})}\lesssim \|f\|_{L^{p_2^*}([-T,T];L^{q_2^*})}$}. Similarly, we have
\begin{align*}
\|\Gamma f\|_{L^{p_1}([-T,-T_0];L^{q_1})}\lesssim \|f\|_{L^{p_2^*}([-T,T];L^{q_2^*})}.
\end{align*}
Thus we have proved \eqref{appinhomaim}.

\subsection*{Acknowledgment}
KT was supported by JSPS KAKENHI Grant Number 23K13004, and HT was supported by JSPS KAKENHI Grant Numbers 20J00024 and 23K12947. HT is grateful to Toshiyuki Kobayashi for helpful comments. The authors would like to appreciate Hatem Mejjaoli for letting us know the papers \cite{BMMS,M}. The authors would like to thank the anonymous referees for their valuable suggestions and improvements.

\pdfbookmark[1]{References}{ref}
\LastPageEnding

\end{document}